\newtheorem{Def}{Definition}[section]
\newtheorem{Lem}[Def]{Lemma}
\newtheorem{Thm}[Def]{Theorem}
\newtheorem{Prop}[Def]{Proposition}
\def\im{{\mathop{\rm im}}}
\def\Span{{\mathop{\rm span}}}
\def\trace{{\mathop{\rm tr}}}
\def\sindex{{\mathop{\rm s\mbox{-}index}}}
\def\realpart{{\mathop{\rm Re}}}
\def\imagpart{{\mathop{\rm Im}}}
\def\Hom{{\mathop{\rm Hom}}}
\def\loc{{\mathop{\rm loc}}}
\def\Ric{{\mathop{\rm Ric}}}
\def\cs{{\mathop{\rm cs}}}
\def\eq#1{{\rm(\ref{#1})}}
\begin{document}
\pagenumbering{arabic}

\title{Mean curvature flow of Lagrangian submanifolds with isolated conical singularities}
\author{Tapio Behrndt}
\maketitle

\begin{abstract}
In this paper we study the short time existence problem for the (generalized) Lagrangian mean curvature flow in (almost) Calabi--Yau manifolds when the initial Lagrangian submanifold has isolated conical singularities modelled on stable special Lagrangian cones. Given a Lagrangian submanifold $F_0:L\rightarrow M$ in an almost Calabi--Yau manifold $M$ with isolated conical singularities at $x_1,\ldots,x_n\in M$ modelled on stable special Lagrangian cones $C_1,\ldots,C_n$ in $\mathbb{C}^m$, we show that for a short time there exist one-parameter families of points $x_1(t),\ldots x_n(t)\in M$ and a one parameter family of Lagrangian submanifolds $F(t,\cdot):L\rightarrow M$ with isolated conical singularities at $x_1(t),\ldots,x_n(t)\in M$ modelled on $C_1,\ldots,C_n$, which evolves by (generalized) Lagrangian mean curvature flow with initial condition $F_0:L\rightarrow M$.
\end{abstract}

\section{Introduction}
\subsection{Lagrangian mean curvature flow}
In a Calabi--Yau manifold $M$ with holomorphic volume form $\Omega$ there is a distinguished class of submanifolds called special Lagrangian submanifolds. These are oriented Lagrangian submanifolds that are calibrated with respect to $\realpart\;\Omega$. There has been growing interest in special Lagrangian submanifolds in the past decade since these are the key ingredient in the Strominger--Yau--Zaslow conjecture \cite{SYZ} which states mirror symmetry in terms of special Lagrangian torus fibrations.

Proving the existence of special Lagrangian submanifolds in a Calabi--Yau manifold is a hard problem. For instance Wolfson proved in \cite{Wolfson} the existence of a K3-surface which has no special Lagrangian submanifolds. This shows how subtle the issue is. However, since special Lagrangian submanifolds are calibrated submanifolds, they are volume minimizers in their homology class. One possible approach to the study of the existence of special Lagrangian submanifolds is therefore through mean curvature flow, which is the negative gradient flow of the volume functional. The key observation here is due to Smoczyk \cite{Smoczyk} who proves that a compact Lagrangian submanifold in a Calabi--Yau manifold (or even in a K\"ahler--Einstein manifold) remains Lagrangian under the mean curvature flow. The na\"{\i}ve idea is therefore to start with a Lagrangian submanifold in a Calabi--Yau manifold and to deform it under Lagrangian mean curvature flow to a special Lagrangian submanifold. The longtime convergence of the Lagrangian mean curvature flow to a special Lagrangian submanifold has so far only been verified in several special cases, see for instance Smoczyk and Wang \cite{SmoczykWang} and Wang \cite{Wang}. Also in \cite{ThomasYau} Thomas and Yau conjecture that for a given Lagrangian submanifold in a Calabi--Yau manifold, which satisfies a certain stability condition, the Lagrangian mean curvature flow exists for all time and converges to a special Lagrangian submanifold. In general however one expects that a Lagrangian submanifold will form a finite time singularity under the mean curvature flow. In fact, recently Neves \cite{Neves} constructed examples of Lagrangian surfaces in two dimensional Calabi--Yau manifolds which develop a finite time singularity under the mean curvature flow. The appearance of finite time singularities in the Lagrangian mean curvature flow therefore seems to be unavoidable in general.

When a finite time singularity occurs there are two possibilities, depending on the kind of singularity, how the flow can be continued. The first possibility is as in Perelman's work \cite{Perelman} on the Ricci flow of three manifolds, where a surgery is performed before the singularity occurs and the flow is then continued. The other possibility to continue the Lagrangian mean curvature flow when a finite time singularity occurs is to evolve the singular Lagrangian submanifold by mean curvature flow in a specific class of singular Lagrangian submanifolds. 

\subsection{Results and overview of this paper}
In this paper we study the short time existence problem for the generalized Lagrangian mean curvature flow in almost Calabi--Yau manifolds when the initial Lagrangian submanifold has isolated conical singularities modelled on stable special Lagrangian cones. We show that for a given Lagrangian submanifold $F_0:L\rightarrow M$ with isolated conical singularities modelled on stable special Lagrangian cones one can find for a short time a solution $F(t,\cdot):L\rightarrow M$, $0\leq t<T$, to the generalized Lagrangian mean curvature flow with initial condition $F_0:L\rightarrow M$, by letting the conical singularities move around in $M$. The Lagrangian mean curvature flow of $F_0:L\rightarrow M$ (here on the left) looks therefore after a short time like the surface on the right.

\begin{center}
\scalebox{0.5}{{\begin{picture}(0,0)%
\includegraphics{lmcf.pstex}%
\end{picture}%
\setlength{\unitlength}{3947sp}%
\begingroup\makeatletter\ifx\SetFigFont\undefined%
\gdef\SetFigFont#1#2#3#4#5{%
  \reset@font\fontsize{#1}{#2pt}%
  \fontfamily{#3}\fontseries{#4}\fontshape{#5}%
  \selectfont}%
\fi\endgroup%
\begin{picture}(8293,3306)(654,-2937)
\put(2126,-2861){\makebox(0,0)[lb]{\smash{{\SetFigFont{12}{14.4}{\rmdefault}{\mddefault}{\updefault}{\color[rgb]{0,0,0}$x(0)$}%
}}}}
\put(2601,-2336){\makebox(0,0)[lb]{\smash{{\SetFigFont{12}{14.4}{\rmdefault}{\mddefault}{\updefault}{\color[rgb]{0,0,0}$C(0)$}%
}}}}
\put(7401,-2711){\makebox(0,0)[lb]{\smash{{\SetFigFont{12}{14.4}{\rmdefault}{\mddefault}{\updefault}{\color[rgb]{0,0,0}$x(0)$}%
}}}}
\put(6439,-2011){\makebox(0,0)[lb]{\smash{{\SetFigFont{12}{14.4}{\rmdefault}{\mddefault}{\updefault}{\color[rgb]{0,0,0}$C(0)$}%
}}}}
\put(3189,189){\makebox(0,0)[lb]{\smash{{\SetFigFont{12}{14.4}{\rmdefault}{\mddefault}{\updefault}{\color[rgb]{0,0,0}$F_0:L\longrightarrow M$}%
}}}}
\put(5114,189){\makebox(0,0)[lb]{\smash{{\SetFigFont{12}{14.4}{\rmdefault}{\mddefault}{\updefault}{\color[rgb]{0,0,0}$F(t,\cdot):L\longrightarrow M$}%
}}}}
\put(8239,-2574){\makebox(0,0)[lb]{\smash{{\SetFigFont{12}{14.4}{\rmdefault}{\mddefault}{\updefault}{\color[rgb]{0,0,0}$x(t)$}%
}}}}
\put(8439,-1864){\makebox(0,0)[lb]{\smash{{\SetFigFont{12}{14.4}{\rmdefault}{\mddefault}{\updefault}{\color[rgb]{0,0,0}$C(t)$}%
}}}}
\end{picture}%
}}
\end{center}

We give a short overview of this paper. In Section \ref{SUPERMANN} we first introduce some necessary background material from symplectic geometry and Riemannian submanifold geometry. Further we define the notion of the generalized Lagrangian mean curvature flow in almost Calabi--Yau manifolds and present a new approach to the short time existence problem of the generalized Lagrangian mean curvature flow when the initial Lagrangian submanifold is a compact Lagrangian submanifold. We feel that it is helpful first to understand our alternative approach to the short time existence problem when the initial submanifold is compact in order to understand the much more complicated approach to the short time existence problem when the initial Lagrangian submanifold has isolated conical singularities. In Section \ref{ANALYSIS} we review some important results about linear parabolic equations on Riemannian manifolds with conical singularities, which build the core for the later short time existence proof. In Section \ref{SUPERMANNOOO} we then introduce special Lagrangian cones and Lagrangian submanifolds with isolated conical singularities in almost Calabi--Yau manifolds. Further we discuss several Lagrangian neighbourhood theorems that will assist us in setting up the later short time existence problem. Finally in Section \ref{DITISKLASSE} we discuss the short time existence proof of the Lagrangian mean curvature flow when the initial Lagrangian submanifold has isolated conical singularities modelled on stable special Lagrangian cones. First, generalizing the ideas from Section \ref{SUPERMANN}, we discuss how to set up the short time existence problem using a Lagrangian neighbourhood theorem for Lagrangian submanifolds with isolated conical singularities. Then, using the analytical results from Section \ref{ANALYSIS}, we discuss in an informal way how the short time existence of the flow is proven. We avoid the long and rather complicated analytical details of the short time existence proof and merely concentrate on the ideas of the proof. The interested reader may consult the author's DPhil thesis \cite{Behrndt1} to learn about the details of the proof.

\subsection{Acknowledgements}

The author wishes to thank his supervisor Dominic Joyce for many useful discussions about his work on special Lagrangian submanifolds with isolated conical singularities. The author is also grateful to Tom Ilmanen and Andr\'e Neves for interesting conversations. This work was supported by a Sloane Robinson Foundation Graduate Award of the Lincoln College and by an EPSRC Research Studentship.

\section{Generalized Lagrangian mean curvature flow of compact Lagrangian submanifolds}\label{SUPERMANN}

Before we begin with our review of some basic notions from Riemannian submanifold geometry and symplectic geometry we have to make a remark about the regularity of the manifolds and maps, i.e. functions, differential forms, vector fields, and embeddings, that we consider in this paper. All the manifolds that we consider in this paper are assumed to be smooth and connected. Moreover we make use of the convention that all the maps we are considering are smooth, unless differently specified. For example, when we say that $u$ is a function on the manifold $M$ or $\beta$ is a one-form on $M$, then we mean that $u$ is a smooth function on $M$ and $\beta$ is a smooth one-form on $M$. Otherwise we may say that $u$ is a $C^k$-function on $M$, meaning that $u$ is $k$-times continuously differentiable.

Throughout this section we will restrict ourselves to smooth maps. The definitions and results that we present in this section, however, have straightforward generalizations when the maps have less regularity (assuming that the maps are $C^2$ is usually sufficient). We wanted to mention this rather obvious fact, since the regularity of the maps is of particular importance in the study of the short time existence problem in \S\ref{DITISKLASSE}.

\subsection{Some notions from Riemannian submanifold geometry}
We now recall some basic definitions from Riemannian submanifold geometry. 

Let $(M,g)$ be an $m$-dimensional Riemannian manifold and $N$ a manifold of dimension $n$ with $n\leq m$. An embedding of $N$ into $M$ is an injective map $F:N\rightarrow M$, such that the differential $\mathrm dF(x):T_xN\rightarrow T_{F(x)}M$ is injective for every $x\in N$. The image $F(N)$ of an embedding $F:N\rightarrow M$ is then an $n$-dimensional submanifold of $M$. In this paper we will refer to an embedding $F:N\rightarrow M$ as an $n$-dimensional submanifold of $M$.

A submanifold $F:N\rightarrow M$ defines an orthogonal decomposition of the vector bundle $F^*(TM)$ into $\mathrm dF(TN)\oplus \nu N$. The vector bundle $\nu N$ over $N$ is the normal bundle of $F:N\rightarrow M$. Denote by $\pi_{\nu N}$ the orthogonal projection $F^*(TM)\rightarrow \nu N$ onto the normal bundle of $F:N\rightarrow M$. The second fundamental form of a submanifold $F:N\rightarrow M$ is a section of the vector bundle $\odot^2T^*N\otimes \nu N$ defined by $\mbox{\Romannum{2}}(X,Y)=\pi_{\nu N}(\nabla_{\mathrm dF(X)}\mathrm dF(Y))$ for $X,Y\in TN$. Here $\nabla$ is the Levi--Civita connection of $g$. The mean curvature vector field of $F:N\rightarrow M$ is a section of $\nu N$ defined by $H=\trace\;\mbox{\Romannum{2}}$, where the trace is taken with respect to the Riemannian metric $F^*(g)$ on $N$. Finally, a submanifold $F:N\rightarrow M$ is a minimal submanifold if the mean curvature vector field is zero. It can be shown that a compact submanifold $F:N\rightarrow M$ is minimal if and only if it is a critical point of the volume functional.

\subsection{Symplectic manifolds and Lagrangian submanifolds}
\begin{Def}
A $2m$-dimensional symplectic manifold is a pair $(M,\omega)$, where $M$ is a $2m$-dimensional manifold and $\omega$ is a closed and non-degenerate two-form on $M$. 
\end{Def}

The most elementary example of a symplectic manifold is $(\mathbb{C}^m,\omega')$, where $\omega'=\sum_{j=1}^m\mathrm dx_j\wedge \mathrm dy_j$, and $(x_1,\ldots,y_m)$ are the usual real coordinates on $\mathbb{C}^m$. Denote by $B_R$ the open ball of radius $R>0$ about the origin in $\mathbb{C}^m$. Then $(B_R,\omega')$ is a symplectic manifold, and in fact every $2m$-dimensional symplectic manifold is locally isomorphic to $(B_R,\omega')$ for some small $R>0$. This is the statement of Darboux' Theorem \cite[Thm 3.15]{McDuffSalamon2}.

\begin{Thm}\label{DarbouxTheorem}
Let $(M,\omega)$ be a $2m$-dimensional symplectic manifold, $x\in M$, and let $A:\mathbb{C}^m\rightarrow T_xM$ be an isomorphism with $A^*(\omega)=\omega'$. Then there exists $R>0$ and an embedding $\Upsilon:B_R\rightarrow M$, such that $\Upsilon^*(\omega)=\omega'$, $\Upsilon(0)=x$, and $\mathrm d\Upsilon(0)=A$.
\end{Thm}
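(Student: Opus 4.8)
The plan is to reduce the statement to a local problem on a ball and then apply Moser's method of homotopy. First I would fix any embedding $\phi:B_{R_0}\rightarrow M$ with $\phi(0)=x$ and $\mathrm d\phi(0)=A$; such a $\phi$ exists by composing a coordinate chart about $x$ with the linear isomorphism $A$ and shrinking the domain. Pulling back the symplectic form gives a symplectic form $\omega_1:=\phi^*(\omega)$ on $B_{R_0}$, and since $\mathrm d\phi(0)=A$ the hypothesis $A^*(\omega)=\omega'$ forces $\omega_1$ and the constant form $\omega_0:=\omega'$ to agree at the origin. The task then becomes to produce a local diffeomorphism $\psi$ near $0$ with $\psi(0)=0$, $\mathrm d\psi(0)=\mathrm{id}$, and $\psi^*(\omega_1)=\omega_0$, after which $\Upsilon:=\phi\circ\psi$ will satisfy all the required properties.

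Next I would interpolate between the two forms by $\omega_t:=(1-t)\omega_0+t\,\omega_1$ for $t\in[0,1]$. Each $\omega_t$ is closed, and at the origin $\omega_t(0)=\omega_0(0)$ is non-degenerate; by compactness of $[0,1]$ and continuity, $\omega_t$ remains non-degenerate, hence symplectic, on some smaller ball $B_R$ for all $t$ simultaneously. Since $B_R$ is star-shaped and $\omega_1-\omega_0$ is closed, the Poincar\'e lemma provides a one-form $\sigma$ with $\mathrm d\sigma=\omega_1-\omega_0$; because $\omega_1-\omega_0$ vanishes at $0$, the explicit homotopy operator on a star-shaped domain may be used to arrange $\sigma(0)=0$ and $\mathrm d\sigma(0)=0$. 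Using non-degeneracy of $\omega_t$ I would then define the time-dependent vector field $X_t$ by $\iota_{X_t}\omega_t=-\sigma$.

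The heart of the proof is Moser's computation: letting $\psi_t$ denote the flow of $X_t$ with $\psi_0=\mathrm{id}$, Cartan's formula together with $\mathrm d\omega_t=0$ gives
$$\frac{\mathrm d}{\mathrm dt}\bigl(\psi_t^*\omega_t\bigr)=\psi_t^*\Bigl(\mathcal L_{X_t}\omega_t+\tfrac{\mathrm d}{\mathrm dt}\omega_t\Bigr)=\psi_t^*\bigl(\mathrm d\iota_{X_t}\omega_t+(\omega_1-\omega_0)\bigr)=\psi_t^*\bigl(-\mathrm d\sigma+\mathrm d\sigma\bigr)=0.$$
Thus $\psi_t^*\omega_t$ is constant in $t$, so $\psi_1^*\omega_1=\psi_0^*\omega_0=\omega_0$, and $\psi:=\psi_1$ is the desired diffeomorphism. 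Finally $\Upsilon:=\phi\circ\psi$ satisfies $\Upsilon^*(\omega)=\psi^*\omega_1=\omega'$, $\Upsilon(0)=x$, and $\mathrm d\Upsilon(0)=A\circ\mathrm{id}=A$, and is an embedding on a sufficiently small ball as a composition of an embedding with a local diffeomorphism.

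I expect the main obstacle to be the control of the flow $\psi_t$: one must guarantee that $X_t$ generates a flow defined up to time $t=1$ on a single ball $B_R$, and that $\psi_t(0)=0$ with $\mathrm d\psi_t(0)=\mathrm{id}$ throughout. Both follow from the careful choice of primitive. Since $\sigma(0)=0$ we get $X_t(0)=0$, so $0$ is a stationary point and, after possibly shrinking $R$, the flow exists for all $t\in[0,1]$; and since $\mathrm d\sigma(0)=0$ we get $\mathrm dX_t(0)=0$, which forces $\mathrm d\psi_t(0)$ to solve the linear variational equation with vanishing coefficient, hence $\mathrm d\psi_t(0)=\mathrm{id}$ for all $t$. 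Securing this uniform existence and the first-order behaviour at the origin is precisely the delicate point of the argument.
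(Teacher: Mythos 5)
Your argument is correct: the paper itself gives no proof of Theorem \ref{DarbouxTheorem} but simply cites McDuff--Salamon, and the proof there is exactly the Moser homotopy method you carry out, including the reduction to two symplectic forms on a ball agreeing at the origin and the choice of a primitive $\sigma$ vanishing to second order at $0$ so that the flow fixes the origin with identity differential. Your handling of the two delicate points (uniform existence of the flow up to $t=1$ and the first-order behaviour at $0$) is sound, so there is nothing to add.
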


Another important example of a symplectic manifold is the cotangent bundle of a manifold. If $M$ is an $m$-dimensional manifold, then the cotangent bundle $T^*M$ of $M$ is a $2m$-dimensional manifold that has a canonical symplectic structure $\hat{\omega}$ defined as follows. Denote by $\pi:T^*M\rightarrow M$ the canonical projection and let $\hat{\lambda}$ be the one-form on $T^*M$ defined by $\hat{\lambda}(\beta)=(\mathrm d\pi)^*(\beta)$ for $\beta\in T^*M$. Set $\hat{\omega}=-\mathrm d\hat{\lambda}$, then one can show that $\hat{\omega}$ is a symplectic structure on $T^*M$.

\begin{Def}
Let $(M,\omega)$ be an $2m$-dimensional symplectic manifold. An $m$-dimensional submanifold $F:L\rightarrow M$ of $M$ is a Lagrangian submanifold if $F^*(\omega)=0$.
\end{Def}

Of particular importance for our later study of the generalized Lagrangian mean curvature flow is the notion of a Lagrangian neighbourhood, which we now introduce.

\begin{Def}
Let $(M,\omega)$ be a symplectic manifold and $F:L\rightarrow M$ a Lagrangian submanifold of $M$. A Lagrangian neighbourhood for $F:L\rightarrow M$ is an embedding $\Phi_L:U_L\rightarrow M$ of an open neighbourhood $U_L$ of the zero section in $T^*L$ onto an open neighbourhood of $F(L)$ in $M$, such that $\Phi_L^*(\omega)=\hat{\omega}$ and $\Phi_L(x,0)=F(x)$ for $x\in L$.
\end{Def}

When $F:L\rightarrow M$ is a compact Lagrangian submanifold, then the existence of a Lagrangian neighbourhood for $F:L\rightarrow M$ is guaranteed by the Lagrangian Neighbourhood Theorem.

\begin{Thm}[\bf{Lagrangian Neighbourhood Theorem}]\label{LagrangianNeighbourhood}
Let $(M,\omega)$ be a symplectic manifold and $F:L\rightarrow M$ a compact Lagrangian submanifold. Then there exists a Lagrangian neighbourhood $\Phi_L:U_L\rightarrow M$ for $F:L\rightarrow M$. 
\end{Thm}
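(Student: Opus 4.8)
The plan is to prove this by the Moser deformation argument, the standard route to Weinstein's Lagrangian neighbourhood theorem. The strategy is first to produce an auxiliary embedding $\Phi_0:U_L\rightarrow M$ of a neighbourhood of the zero section in $T^*L$ onto a neighbourhood of $F(L)$ which restricts to $F$ on the zero section and whose pullback $\Phi_0^*(\omega)$ agrees with the canonical symplectic form $\hat\omega$ \emph{only along the zero section}, and then to correct $\Phi_0$ by a diffeomorphism fixing $L$ pointwise so that the two forms agree on an entire neighbourhood.

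First I would construct $\Phi_0$. Choosing a Riemannian metric on $M$ and using the exponential map (equivalently, any tubular neighbourhood theorem, available since $L$ is compact), I obtain a diffeomorphism from a neighbourhood of the zero section of the normal bundle $\nu L$ onto a neighbourhood of $F(L)$. Since $L$ is Lagrangian, the subspace $\mathrm dF(T_xL)\subset T_{F(x)}M$ is Lagrangian, so the symplectic form induces a bundle isomorphism $\nu L\rightarrow T^*L$, $v\mapsto\omega(v,\mathrm dF(\cdot))$; composing yields an embedding $\Phi_0$ of a neighbourhood of the zero section in $T^*L$ with $\Phi_0(x,0)=F(x)$. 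The metric, and the choice of complement to $\mathrm dF(T_xL)$, must be arranged so that at each point of the zero section $\mathrm d\Phi_0$ carries the canonical splitting $T_{(x,0)}(T^*L)=T_xL\oplus T_x^*L$ to the splitting of $T_{F(x)}M$ into the Lagrangian $\mathrm dF(T_xL)$ and a complementary Lagrangian subspace identified with $T_x^*L$ via $\omega$; with this choice $\Phi_0^*(\omega)$ and $\hat\omega$ coincide as two-forms at every point of the zero section. This linear-algebraic matching along $L$ is the delicate part of the construction, and it is where I expect the main obstacle to lie, since everything afterwards is the routine Moser machinery.

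With $\Phi_0$ in hand, set $\omega_0=\hat\omega$ and $\omega_1=\Phi_0^*(\omega)$; both are closed and they agree along the zero section $L$. I would then apply a relative Poincar\'e lemma to write $\omega_1-\omega_0=\mathrm d\sigma$ for a one-form $\sigma$ vanishing along $L$. Consider the family $\omega_t=\omega_0+t(\omega_1-\omega_0)$ for $t\in[0,1]$; nondegeneracy is an open condition holding along the compact $L$, so after shrinking the neighbourhood each $\omega_t$ is symplectic. Define the time-dependent vector field $X_t$ by $\iota_{X_t}\omega_t=-\sigma$; since $\sigma$ vanishes on $L$, so does $X_t$, hence its flow $\psi_t$ fixes $L$ pointwise and exists for all $t\in[0,1]$ on a neighbourhood of $L$. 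The Moser computation then gives
$$\frac{\mathrm d}{\mathrm dt}\psi_t^*(\omega_t)=\psi_t^*\bigl(\mathrm d\iota_{X_t}\omega_t+\iota_{X_t}\mathrm d\omega_t+(\omega_1-\omega_0)\bigr)=\psi_t^*\bigl(-\mathrm d\sigma+\mathrm d\sigma\bigr)=0,$$
so that $\psi_1^*(\omega_1)=\psi_0^*(\omega_0)=\hat\omega$.

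Finally I would set $\Phi_L=\Phi_0\circ\psi_1$ on a sufficiently small neighbourhood $U_L$ of the zero section on which $\Phi_L$ is an embedding. Then $\Phi_L(x,0)=F(x)$ because $\psi_1$ fixes the zero section, and $\Phi_L^*(\omega)=\psi_1^*(\omega_1)=\hat\omega$, so $\Phi_L$ is the desired Lagrangian neighbourhood. Compactness of $L$ is used throughout to guarantee that all the neighbourhoods can be chosen uniformly and that the Moser flow runs up to time $t=1$.
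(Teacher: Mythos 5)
Your argument is correct and is essentially the standard Weinstein proof via the Moser deformation trick, which is exactly the proof the paper relies on by citation to McDuff and Salamon \cite[Thm. 3.32]{McDuffSalamon2}; the paper itself gives no independent argument. The two delicate points you flag --- matching the splittings of $T_{F(x)}M$ and $T_{(x,0)}(T^*L)$ along the zero section (handled in the reference by choosing an $\omega$-compatible almost complex structure), and the relative Poincar\'e lemma producing a primitive $\sigma$ vanishing along $L$ --- are handled correctly, so there is nothing to add.
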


\noindent A proof of the Lagrangian Neighbourhood Theorem for compact Lagrangian submanifolds can be found in McDuff and Salamon \cite[Thm. 3.32]{McDuffSalamon2}.

\subsection{Almost Calabi--Yau manifolds}

We define almost Calabi--Yau manifolds following Joyce \cite[Def. 8.4.3]{JoyceBook}.

\begin{Def}
An $m$-dimensional almost Calabi--Yau manifold is a quadruple $(M,J,\omega,\Omega)$, where $(M,J)$ is an $m$-dimensional complex manifold, $\omega$ is the K\"ahler form of a K\"ahler metric $g$ on $M$, and $\Omega$ is a holomorphic volume form on $M$.
\end{Def}

Let $(M,J,\omega,\Omega)$ be an $m$-dimensional almost Calabi--Yau manifold. The Ricci-form is the complex $(1,1)$-form given by $\rho(X,Y)=\Ric(JX,Y)$ for $X,Y\in TM$, where $\Ric$ is the Ricci-tensor of $g$. We define a function $\psi$ on $M$ by
\begin{equation}\label{DefinitionPsi}
e^{2m\psi}\frac{\omega^m}{m!}=(-1)^{\frac{m(m-1)}{2}}\left(\frac{i}{2}\right)^m\Omega\wedge\bar{\Omega}.
\end{equation}
Then $|\Omega|=2^{m/2}e^{m\psi}$, so that $\Omega$ is parallel if and only if $\psi$ is constant. One can show that the Ricci-form of an almost Calabi--Yau manifold satisfies $\rho=\mathrm d\mathrm d^c\log|\Omega|$. Thus $\rho=m\mathrm d\mathrm d^c\psi$ and it follows that $g$ is Ricci-flat if and only if $\psi$ is constant. If $\psi\equiv 0$, then $(M,J,\omega,\Omega)$ is a Calabi--Yau manifold \cite[Ch. 8, \S 4]{JoyceBook}. 

The most important example of an (almost) Calabi--Yau manifold is $\mathbb{C}^m$ with its standard structure. Denote by $(x_1,\ldots,x_m,y_1,\ldots,y_m)$ the usual real coordinates on $\mathbb{C}^m$. We define a complex structure $J'$, a non-degenerate two form $\omega'$, and a holomorphic volume form $\Omega'$ on $\mathbb{C}^m$ by
\begin{eqnarray*}
&&J'\left(\frac{\partial}{\partial x_j}\right)=\frac{\partial}{\partial y_j}\quad\mbox{and } J'\left(\frac{\partial}{\partial y_j}\right)=-\frac{\partial}{\partial x_j}\quad\mbox{for }j=1,\ldots,m,\\
&&\omega'=\sum_{j=1}^m\mathrm dx_j\wedge \mathrm dy_j,\quad \Omega'=(\mathrm dx_1+i\mathrm dy_1)\wedge\cdots\wedge(\mathrm dx_m+i\mathrm dy_m).
\end{eqnarray*}
Then $(\mathbb{C}^m,J',\omega',\Omega')$ is an (almost) Calabi--Yau manifold and the corresponding Riemannian metric is the Euclidean metric $g'=\mathrm dx_1^2+\cdots+\mathrm dy_m^2$.

We now discuss Lagrangian submanifolds in almost Calabi--Yau manifolds. Thus let $(M,J,\omega,\Omega)$ be an $m$-dimensional almost Calabi--Yau manifold and $F:L\rightarrow M$ a Lagrangian submanifold. We define a section $\alpha$ of the vector bundle $\Hom(\nu L,T^*L)$ by
\begin{equation}\label{Defalpha}
\alpha(\xi)=\alpha_{\xi}=F^*(\xi\;\lrcorner\;\omega)\quad\mbox{for }\xi\in\nu L.
\end{equation}
Since $F:L\rightarrow M$ is Lagrangian, $\alpha$ is an isomorphism in each fibre over $L$. Moreover, $\alpha^{-1}(\mathrm du)=-J(\mathrm dF(\nabla u))$ for every function $u$ on $L$.

Let $H$ be the mean curvature vector field of $F:L\rightarrow M$. The one-form $\alpha_H=F^*(H\;\lrcorner\;\omega)$ on $L$ is the mean curvature form of $F:L\rightarrow M$. Then $\mathrm d\alpha_H=F^*(\rho)$, as first observed by Dazord \cite{Dazord}. Assume for the moment that $(M,J,\omega,\Omega)$ is Calabi--Yau. Then $\rho\equiv 0$, as $g$ is Ricci-flat. In particular $\alpha_H$ is closed and it follows from Cartan's formula that $F^*(\mathcal{L}_H\omega)=0$. Thus, if $(M,J,\omega,\Omega)$ is Calabi--Yau, then the deformation of a Lagrangian submanifold in direction of the mean curvature vector field is an infinitesimal symplectic motion. Now if $(M,J,\omega,\Omega)$ is an almost Calabi--Yau manifold, then the Ricci-form is given by $\rho=m\mathrm d\mathrm d^c\psi$. In particular $F^*(\mathcal{L}_H\omega)=mF^*(\mathrm d\mathrm d^c\psi)$ is nonzero in general. We therefore need a generalization of the mean curvature vector field with the property that the deformation of a Lagrangian submanifold in its direction is an infinitesimal symplectic motion. This leads to the definition of the generalized mean curvature vector field, which was introduced by the author in \cite[\S 3]{Behrndt} and later generalized by Smoczyk and Wang in \cite{SmoczykWang2}.

\begin{Def}\label{GenMCF}
The generalized mean curvature vector field of $F:L\rightarrow M$ is the normal vector field $K=H-m\pi_{\nu L}(\nabla\psi)$, where $H$ denotes the mean curvature vector field of $F:L\rightarrow M$. The one-form $\alpha_K=F^*(K\;\lrcorner\;\omega)$ is the generalized mean curvature form of $F:L\rightarrow M$.
\end{Def}

Note that if $\psi$ is constant, then $K\equiv H$. Furthermore, if $F:L\rightarrow M$ is Lagrangian, then a short calculation shows that $F^*(\mathcal{L}_K\omega)=0$. Thus if $F:L\rightarrow M$ is a Lagrangian submanifold in an almost Calabi--Yau manifold, then the deformation of $F:L\rightarrow M$ in the direction of the generalized mean curvature vector field is an infinitesimal symplectic motion.

Next we define the Lagrangian angle of a Lagrangian submanifold. Thus let $F:L\rightarrow M$ be a Lagrangian submanifold. The Lagrangian angle of $F:L\rightarrow M$ is the map $\theta(F):L\rightarrow\mathbb{R}/\pi\mathbb{Z}$ defined by
\begin{equation*}
F^*(\Omega)=e^{i\theta(F)+mF^*(\psi)}\mathrm dV_{F^*(g)}.
\end{equation*}
Since $F:L\rightarrow M$ is a Lagrangian submanifold, $\theta(F)$ is in fact well defined, see for instance Harvey and Lawson \cite[III.1]{HarveyLawson}. In general $\theta(F):L\rightarrow\mathbb{R}/\pi\mathbb{Z}$ cannot be lifted to a smooth function $\theta(F):L\rightarrow\mathbb{R}$. However, $\mathrm d[\theta(F)]$ is a well defined closed one-form on $L$, so it represents a cohomology class $\mu_F\in H^1(L,\mathbb{R})$ in the first de Rham cohomology group of $L$. Thus if $\mu_F=0$, then $\theta(F):L\rightarrow\mathbb{R}/\pi\mathbb{Z}$ can be lifted to a smooth function $\theta(F):L\rightarrow\mathbb{R}$ and vice versa. The cohomology class $\mu_F$ is called the Maslov class of $F:L\rightarrow M$.

The following proposition gives an important relation between the generalized mean curvature form of a Lagrangian submanifold $F:L\rightarrow M$ and the Lagrangian angle.

\begin{Prop}\label{GeneralizedMeanCurvatureForm}
Let $F:L\rightarrow M$ be a Lagrangian submanifold in an almost Calabi--Yau manifold. Then the generalized mean curvature form of $F:L\rightarrow M$ satisfies $\alpha_K=-\mathrm d[\theta(F)]$.
\end{Prop}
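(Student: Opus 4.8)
The plan is to verify the identity $\alpha_K=-\mathrm d[\theta(F)]$ by differentiating the defining equation for the Lagrangian angle and comparing the result with the known relation $\mathrm d\alpha_H=F^*(\rho)$ together with the correction term coming from $\psi$. The key point is that the generalized mean curvature form was precisely designed so that its exterior derivative captures the variation of the phase of $F^*(\Omega)$, and this phase is exactly $\theta(F)$.

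First I would recall the definition $F^*(\Omega)=e^{i\theta(F)+mF^*(\psi)}\,\mathrm dV_{F^*(g)}$ and observe that since $\Omega$ is holomorphic (hence closed) and $F^*(\Omega)$ is a top-degree form on $L$, one can extract information about $\mathrm d[\theta(F)]$ by taking the covariant derivative of $F^*(\Omega)$ in a direction tangent to $L$. The standard computation, going back to Harvey and Lawson \cite{HarveyLawson} in the Calabi--Yau case, shows that the derivative of the phase $\theta(F)$ along $L$ is controlled by the mean curvature vector field; more precisely, in the Calabi--Yau case one has $\mathrm d[\theta(F)]=\alpha_H$ up to sign. The cleanest route is to differentiate $\Omega$ along a tangent vector $X$ of $L$, use that $\nabla\Omega$ is expressed through the Levi--Civita connection acting on a parallel-up-to-scale object, and isolate the imaginary part, which yields the mean curvature contribution $F^*(H\lrcorner\,\omega)$, and the real part, which yields the $F^*(\psi)$ contribution.

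Next I would account for the almost Calabi--Yau correction. Because $|\Omega|=2^{m/2}e^{m\psi}$ by \eq{DefinitionPsi}, the conformal factor $e^{mF^*(\psi)}$ in the defining equation contributes a term $m\,\mathrm d[F^*(\psi)]=mF^*(\mathrm d\psi)$ when one differentiates. The differentiation therefore splits into a phase part giving $-\mathrm d[\theta(F)]$ and a modulus part giving a multiple of $F^*(\mathrm d\psi)$. Matching these against the definition $K=H-m\pi_{\nu L}(\nabla\psi)$ and using the isomorphism $\alpha$ from \eq{Defalpha}, together with the identity $\alpha^{-1}(\mathrm du)=-J(\mathrm dF(\nabla u))$, shows that $\alpha_K=F^*(K\lrcorner\,\omega)=F^*(H\lrcorner\,\omega)-mF^*\big((\pi_{\nu L}\nabla\psi)\lrcorner\,\omega\big)$, and that the second term is exactly $-mF^*(\mathrm d\psi)$ when restricted to $L$ via $\alpha$. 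Combining, the modulus contributions cancel and one is left with $\alpha_K=-\mathrm d[\theta(F)]$.

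The main obstacle I expect is the careful bookkeeping of the two contributions, i.e. cleanly separating the phase ($\theta$) and modulus ($\psi$) parts when differentiating $F^*(\Omega)$, and verifying that the projection $\pi_{\nu L}(\nabla\psi)$ paired with $\omega$ via $\alpha$ reproduces precisely $mF^*(\mathrm d\psi)$ with the correct sign, since the tangential part of $\nabla\psi$ does not contribute to $K$ but the restriction $F^*(\mathrm d\psi)$ only sees the tangential derivative. Reconciling these two facts requires using that $\alpha_\xi=F^*(\xi\lrcorner\,\omega)$ vanishes on tangential $\xi$ in the relevant way and that $\omega$ pairs the normal and tangential distributions through $J$. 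Once this sign and projection bookkeeping is done correctly, the identity follows directly.
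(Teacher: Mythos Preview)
Your overall strategy---differentiate $F^*(\Omega)=e^{i\theta(F)+mF^*(\psi)}\,\mathrm dV_{F^*(g)}$ along tangent directions and read off the imaginary part---is the standard one and is what the cited reference \cite[Prop.~4]{Behrndt} does; the paper itself gives no independent argument. However, your identification of the $\psi$-correction is wrong, and this is exactly the tension you flag but do not resolve in your last paragraph. The claim that $F^*\big((\pi_{\nu L}\nabla\psi)\lrcorner\,\omega\big)=F^*(\mathrm d\psi)$ is false: since $L$ is Lagrangian the tangential part of $\nabla\psi$ contributes nothing, so the left side equals $F^*(\nabla\psi\lrcorner\,\omega)$, and $(\nabla\psi\lrcorner\,\omega)(X)=g(J\nabla\psi,X)=-\mathrm d\psi(JX)$. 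This is a $J$-twisted object, not $F^*(\mathrm d\psi)$; your instinct that ``$F^*(\mathrm d\psi)$ only sees the tangential derivative'' while the projection is normal is exactly why they cannot coincide.

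What actually happens is that the \emph{real} part of the differentiated identity gives the tautology $mF^*(\mathrm d\psi)=mF^*(\mathrm d\psi)$ and carries no content. All the information sits in the \emph{imaginary} part, and there a second contribution appears that you have omitted: the non-parallelism of $\Omega$. Since $\Omega$ is holomorphic with $|\Omega|=2^{m/2}e^{m\psi}$, one has $\nabla_X\Omega=2m\,\partial\psi(X)\,\Omega$, whose imaginary part contributes $-m\,\mathrm d\psi(Je_k)$ to the equation. Comparing imaginary parts then yields $\mathrm d\theta(e_k)=-\alpha_H(e_k)-m\,\mathrm d\psi(Je_k)$; using $\mathrm d\psi(Je_k)=g(\nabla\psi,Je_k)=-\alpha(\pi_{\nu L}\nabla\psi)(e_k)$ (as $Je_k$ is normal) rearranges this to $\alpha_K=-\mathrm d\theta$. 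So the cancellation you want does occur, but through $\mathrm d\psi\circ J$ on both sides, and it requires you to compute $\nabla\Omega$ rather than treat $\Omega$ as parallel.
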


\noindent A proof of Proposition \ref{GeneralizedMeanCurvatureForm} can be found in the author's paper \cite[Prop. 4]{Behrndt}.

Notice that as a consequence of Proposition \ref{GeneralizedMeanCurvatureForm}, if $F:L\rightarrow M$ is a Lagrangian submanifold with zero Maslov class, then $\alpha_K$ is an exact one-form and the deformation of $F:L\rightarrow M$ in direction of the generalized mean curvature vector field is an infinitesimal Hamiltonian motion.

Next we define a special class of Lagrangian submanifolds in almost Calabi--Yau manifolds called special Lagrangian submanifolds. 

\begin{Def}\label{DefinitionSpecialLagrangian}
Let $F:L\rightarrow M$ be a Lagrangian submanifold in an almost Calabi--Yau manifold $(M,J,\omega,\Omega)$. Then $F:L\rightarrow M$ is a special Lagrangian submanifold with phase $e^{i\theta}$, $\theta\in\mathbb{R}$, if and only if
\[
F^*(\cos\theta\;\imagpart\;\Omega-\sin\theta\;\realpart\;\Omega)=0.
\]
If $F:L\rightarrow M$ is a special Lagrangian submanifold with phase $e^{i\theta}$, then there is a unique orientation on $L$ in which $F^*(\cos\theta\;\realpart\;\Omega+\sin\theta\;\imagpart\;\Omega)$ is positive.
\end{Def}

\noindent Note that a special Lagrangian submanifold $F:L\rightarrow M$ has zero Maslov-class, since $\theta(F)$ is constant on $L$ and $\mathrm d[\theta(F)]$ represents $\mu_F$ by Proposition \ref{GeneralizedMeanCurvatureForm}. In particular it follows from Proposition \ref{GeneralizedMeanCurvatureForm} that special Lagrangian submanifolds are minimal.

Definition \ref{DefinitionSpecialLagrangian} is not the usual definition of special Lagrangian submanifolds in terms of calibrations, as defined by Harvey and Lawson in \cite{HarveyLawson}. Our definition is, however, equivalent to the definition of special Lagrangian submanifolds as a special class of calibrated submanifolds. Let us show how Definition \ref{DefinitionSpecialLagrangian} fits into the usual frame of special Lagrangian submanifolds as calibrated submanifolds. If we define $\tilde{g}$ to be the conformally rescaled Riemannian metric on $M$ given by $\tilde{g}=e^{2\psi}g$, then one can show that $\realpart\;\Omega$ is a calibration on the Riemannian manifold $(M,\tilde{g})$. We then have the following alternative characterization of special Lagrangian submanifolds.

\begin{Prop}
Let $F:L\rightarrow M$ be an oriented Lagrangian submanifold of an almost Calabi--Yau manifold $(M,J,\omega,\Omega)$. Then $F:L\rightarrow M$ is a special Lagrangian submanifold with phase $e^{i\theta}$, $\theta\in\mathbb{R}$, if and only if $F:L\rightarrow M$ is calibrated with respect to $\realpart(e^{-i\theta}\Omega)$ for the metric $\tilde{g}$.
\end{Prop}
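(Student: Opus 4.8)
The plan is to reduce both conditions to a single pointwise phase equation and to observe that they coincide. The central object is the pullback of the complex volume form. Since $F:L\to M$ is oriented, I would first note that, although the Lagrangian angle is a priori only defined modulo $\pi$, fixing the orientation pins down $\mathrm dV_{F^*(g)}$ as the \emph{positive} volume form, so that $e^{i\theta(F)}$ becomes a genuine unit complex number and $\theta(F)$ lifts to a function valued in $\mathbb R/2\pi\mathbb Z$. With this understood the defining equation of the Lagrangian angle reads $F^*(\Omega)=e^{i\theta(F)+mF^*(\psi)}\,\mathrm dV_{F^*(g)}$, and multiplying by the constant $e^{-i\theta}$ gives
\[
F^*(e^{-i\theta}\Omega)=e^{\,i(\theta(F)-\theta)+mF^*(\psi)}\,\mathrm dV_{F^*(g)}.
\]

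Next I would record the effect of the conformal rescaling on the induced volume form. Since $\tilde g=e^{2\psi}g$ and $\dim L=m$, the induced metrics satisfy $F^*(\tilde g)=e^{2F^*(\psi)}F^*(g)$, and taking determinants in a local frame yields $\mathrm dV_{F^*(\tilde g)}=e^{mF^*(\psi)}\,\mathrm dV_{F^*(g)}$. Taking real and imaginary parts of the displayed identity and substituting this relation, I obtain the two clean formulae
\[
F^*\big(\realpart(e^{-i\theta}\Omega)\big)=\cos\big(\theta(F)-\theta\big)\,\mathrm dV_{F^*(\tilde g)},\qquad F^*\big(\imagpart(e^{-i\theta}\Omega)\big)=\sin\big(\theta(F)-\theta\big)\,\mathrm dV_{F^*(\tilde g)}.
\]
The key conceptual point, which I would highlight, is that the factor $e^{mF^*(\psi)}$ produced by the Lagrangian angle is exactly cancelled by the conformal factor in $\mathrm dV_{F^*(\tilde g)}$; this is precisely why $\tilde g$, rather than $g$, is the metric for which $\realpart(e^{-i\theta}\Omega)$ is a calibration.

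With these formulae both equivalences fall out. Using that $\realpart(e^{-i\theta}\Omega)$ is a calibration on $(M,\tilde g)$ --- the same argument that makes $\realpart\,\Omega$ one, applied to the holomorphic volume form $e^{-i\theta}\Omega$ --- the submanifold $F:L\to M$ is calibrated precisely when $F^*(\realpart(e^{-i\theta}\Omega))=\mathrm dV_{F^*(\tilde g)}$, i.e. when $\cos(\theta(F)-\theta)=1$, that is $\theta(F)\equiv\theta\pmod{2\pi}$. On the other hand, unwinding Definition~\ref{DefinitionSpecialLagrangian}, the identities $\cos\theta\,\imagpart\,\Omega-\sin\theta\,\realpart\,\Omega=\imagpart(e^{-i\theta}\Omega)$ and $\cos\theta\,\realpart\,\Omega+\sin\theta\,\imagpart\,\Omega=\realpart(e^{-i\theta}\Omega)$ show that $F$ is special Lagrangian with phase $e^{i\theta}$ exactly when $\sin(\theta(F)-\theta)=0$ together with the orientation condition $\cos(\theta(F)-\theta)>0$ --- again equivalent to $\theta(F)\equiv\theta\pmod{2\pi}$. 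Since both conditions reduce to the same phase equation, the proposition follows.

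I expect the only genuinely delicate point to be the bookkeeping of orientation, and the passage from the $\mathbb R/\pi\mathbb Z$-valued angle to a $\mathbb R/2\pi\mathbb Z$-valued one; this is what distinguishes the full special Lagrangian condition (which fixes the phase modulo $2\pi$ via the choice of orientation) from the weaker vanishing $F^*(\imagpart(e^{-i\theta}\Omega))=0$ alone (which fixes it only modulo $\pi$). Everything else is the elementary conformal-scaling computation above.
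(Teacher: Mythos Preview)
Your argument is correct and is exactly the standard route: pull back $e^{-i\theta}\Omega$, use the Lagrangian angle identity, absorb the factor $e^{mF^*(\psi)}$ into the $\tilde g$-volume form, and reduce both conditions to $\theta(F)\equiv\theta\pmod{2\pi}$. The paper itself states this proposition without proof, so there is nothing to compare against; your treatment of the orientation bookkeeping (lifting $\theta(F)$ from $\mathbb R/\pi\mathbb Z$ to $\mathbb R/2\pi\mathbb Z$ once the orientation is fixed, and matching the positivity clause in Definition~\ref{DefinitionSpecialLagrangian} with the calibration equality) is the right way to make the ``if and only if'' honest.
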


\subsection{Lagrangian submanifolds in the cotangent bundle}

Let $(M,\omega)$ be a $2m$-dimensional symplectic manifold and $L$ an $m$-dimensional manifold. Let $T^*L$ be the cotangent bundle of $L$ and $\beta$ a one-form on $L$. The graph of $\beta$ is the submanifold
\begin{equation*}
\hat{F}:L\longrightarrow T^*L,\quad\hat{F}(x)=(x,\beta(x))\in T^*_xL\quad\mbox{for }x\in L.
\end{equation*}
We write $\Gamma_{\beta}$ for $\hat{F}(L)=\{(x,\beta(x)):x\in L\}$. Then $\hat{F}^*(\hat{\omega})=-\mathrm d\beta$, so that $\hat{F}:L\rightarrow T^*L$ is a Lagrangian submanifold of $T^*L$ if and only if $\beta$ is closed. In particular every function $u$ on $L$ defines a Lagrangian submanifold $\hat{F}:L\rightarrow T^*L$ by $\hat{F}(x)=(x,\mathrm du(x))$ for $x\in L$.

Now let $F:L\rightarrow M$ be a Lagrangian submanifold and assume that we are given a Lagrangian neighbourhood $\Phi_L:U_L\rightarrow M$ for $F:L\rightarrow M$. If $\beta$ is a closed one-form on $L$ with $\Gamma_{\beta}\subset U_L$, then we can define a submanifold by
\[
\Phi_L\circ\beta:L\longrightarrow M,\quad(\Phi_L\circ\beta)(x)=\Phi_L(x,\beta(x))\quad\mbox{for }x\in L.
\]
Since $\Phi_L^*(\omega)=\hat{\omega}$ and $\beta$ is closed, $\Phi_L\circ\beta:L\rightarrow M$ is a Lagrangian submanifold. Note that if $L$ is compact, then, after reparametrizing by a diffeomorphism on $L$, every Lagrangian submanifold $\tilde{F}:L\rightarrow M$ that is $C^1$-close to $F:L\rightarrow M$ is given by $\Phi_L\circ\beta:L\rightarrow M$ for some unique closed one-form $\beta$ on $L$. 

When we study the generalized Lagrangian mean curvature flow as a flow of functions, we will study deformations of Lagrangian submanifolds of the form $\Phi_L\circ(\beta+s\eta):L\rightarrow M$, for small $s\in\mathbb{R}$ and $\beta,\eta$ closed one-forms on $L$ with $\Gamma_{\beta}\subset U_L$. The next lemma gives a formula for the variation vector field of $\Phi_L\circ(\beta+s\eta):L\rightarrow M$ along the submanifold $\Phi_L\circ\beta:L\rightarrow M$.

\begin{Lem}\label{Var}
Let $\beta,\eta$ be closed one-forms on $L$ with $\Gamma_{\beta}\subset U_L$ and $\varepsilon>0$ sufficiently small such that $\Gamma_{\beta+s\eta}\subset U_L$ for $s\in(-\varepsilon,\varepsilon)$. Then for every $s\in(-\varepsilon,\varepsilon)$, $\Phi_L\circ(\beta+s\eta):L\rightarrow M$ is a Lagrangian submanifold and
\begin{equation*}
\frac{\mathrm d}{\mathrm ds}\Phi_L\circ(\beta+s\eta)\Big|_{s=0}=-\alpha^{-1}(\eta)+V(\eta),
\end{equation*}
where $\alpha$ is defined in \eq{Defalpha} and $V(\eta)=\mathrm d(\Phi_L\circ\beta)(\hat{V}(\eta))$, $\hat{V}(\eta)\in TL$, is the tangential part of the variation vector field.
\end{Lem}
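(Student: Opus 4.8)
The plan is to reduce everything to an explicit computation in the model $T^*L$, exploiting that $\Phi_L$ is a symplectomorphism onto its image. Write $\tilde F=\Phi_L\circ\beta$ for the base submanifold; throughout I interpret $\alpha$ as the fibrewise isomorphism $\nu L\to T^*L$ associated to $\tilde F$ via \eqref{Defalpha} (with $F$ replaced by $\tilde F$), since this is the only reading under which the asserted splitting into a normal part $-\alpha^{-1}(\eta)$ and a tangential part $V(\eta)$ even makes sense. The Lagrangian claim is immediate: $\beta+s\eta$ is closed for every $s$, so $\Phi_L\circ(\beta+s\eta)$ is Lagrangian by the discussion preceding the lemma. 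It then remains to compute the variation vector field $Z:=\frac{\mathrm d}{\mathrm ds}\Phi_L\circ(\beta+s\eta)\big|_{s=0}$ and to identify its normal component.

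First I would compute the variation of the graphs inside $T^*L$. Writing $\hat F_s(x)=(x,\beta(x)+s\eta(x))$, in fibre coordinates only the fibre component depends on $s$, so $W:=\frac{\mathrm d}{\mathrm ds}\hat F_s\big|_{s=0}$ is the vertical vector field along $\Gamma_\beta$ obtained by vertically lifting $\eta$. Since $\Phi_L\circ(\beta+s\eta)=\Phi_L\circ\hat F_s$ and $\Phi_L$ does not depend on $s$, the chain rule gives $Z=\mathrm d\Phi_L(W)$; that is, $Z$ and $W$ are $\Phi_L$-related along $\Gamma_\beta$.

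Next I would extract the normal part through $\alpha$. Decompose $Z=Z^\perp+Z^\top$ into its $\tilde F$-normal and $\tilde F$-tangential parts and write $Z^\top=\mathrm d\tilde F(\hat V(\eta))=:V(\eta)$. Since $Z^\top$ is tangential and $\tilde F$ is Lagrangian, $\tilde F^*(Z^\top\lrcorner\omega)=0$, hence $\alpha(Z^\perp)=\tilde F^*(Z^\perp\lrcorner\omega)=\tilde F^*(Z\lrcorner\omega)$. Using the naturality identity $\Phi_L^*(Z\lrcorner\omega)=W\lrcorner\Phi_L^*\omega=W\lrcorner\hat\omega$ together with $\tilde F=\Phi_L\circ\hat F$ gives $\tilde F^*(Z\lrcorner\omega)=\hat F^*(W\lrcorner\hat\omega)$. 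A direct coordinate computation, using $\hat\omega=-\mathrm d\hat\lambda=\sum_i\mathrm dx_i\wedge\mathrm dp_i$ and $W=\sum_i\eta_i\,\partial_{p_i}$, yields $W\lrcorner\hat\omega=-\pi^*\eta$; pulling back along $\hat F$ and using $\pi\circ\hat F=\mathrm{id}_L$ gives $\hat F^*(W\lrcorner\hat\omega)=-\eta$. Thus $\alpha(Z^\perp)=-\eta$, and since $\alpha$ is a fibrewise isomorphism, $Z^\perp=-\alpha^{-1}(\eta)$, which is the claim.

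The two coordinate computations are routine; the one step that genuinely needs care is the naturality identity $\Phi_L^*(Z\lrcorner\omega)=W\lrcorner\hat\omega$. Because $Z$ and $W$ are defined only along $\tilde F(L)$ and $\Gamma_\beta$, I would not invoke the interior-product formula for globally $\Phi_L$-related vector fields, but verify it pointwise: for $v\in T_q(T^*L)$ with $q\in\Gamma_\beta$ one has $\omega\big(\mathrm d\Phi_L(W_q),\mathrm d\Phi_L(v)\big)=(\Phi_L^*\omega)(W_q,v)=\hat\omega(W_q,v)$, which is exactly what is required and uses only $Z_{\Phi_L(q)}=\mathrm d\Phi_L(W_q)$ and $\Phi_L^*\omega=\hat\omega$. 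The other point to keep flagged throughout is the convention that $\alpha$ refers to the deformed base submanifold $\Phi_L\circ\beta$ rather than to $F$ itself.
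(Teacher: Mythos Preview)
Your proof is correct. The paper itself does not supply a proof of this lemma; it is stated and then used, so there is no ``paper's own proof'' to compare against. Your argument is the natural one: push the variation into $T^*L$ via $\Phi_L$, identify the variation vector field there as the vertical lift of $\eta$, and then read off the normal part by contracting with $\omega$ and using $\Phi_L^*\omega=\hat\omega$. The pointwise verification of the naturality identity is exactly the right level of care, and your remark that $\alpha$ must be interpreted relative to the base $\Phi_L\circ\beta$ (not the original $F$) is the correct reading of the statement.
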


\subsection{Generalized Lagrangian mean curvature flow}
\begin{Def}
Let $F_0:L\rightarrow M$ be a Lagrangian submanifold of $M$. A one-parameter family $\{F(t,\cdot)\}_{t\in(0,T)}$ of Lagrangian submanifolds $F(t,\cdot):L\rightarrow M$, which is continuous up to $t=0$, is evolving by generalized Lagrangian mean curvature flow with initial condition $F_0:L\rightarrow M$ if
\begin{equation}\label{GMCF}
\begin{split}
&\pi_{\nu L}\left(\frac{\partial F}{\partial t}\right)(t,x)=K(t,x)\quad\mbox{for }(t,x)\in(0,T)\times L,\\&F(0,x)=F_0(x)\quad\quad\quad\quad\quad\;\;\mbox{for }x\in L.
\end{split}
\end{equation}
Here $K(t,\cdot)$ is the generalized mean curvature vector field of $F(t,\cdot):L\rightarrow M$ for $t\in(0,T)$ as in Definition \ref{GenMCF}. If $M$ is Calabi--Yau, then $\psi\equiv 0$ and $K\equiv H$. Then we say that $\{F(t,\cdot)\}_{t\in(0,T)}$ evolves by Lagrangian mean curvature flow.
\end{Def}

\noindent We will establish the short time existence of solutions to the generalized Lagrangian mean curvature flow \eq{GMCF} when the initial Lagrangian submanifold $F_0:L\rightarrow M$ is compact in Theorem \ref{GMCFShortTime} below. We first give a short general discussion of the generalized Lagrangian mean curvature flow.

The system of partial differential equations in \eq{GMCF} is, after reparametrizing by a family of diffeomorphisms on $L$, a quasilinear parabolic system. Hence, if $L$ is compact, then it follows from the standard theory for parabolic equations on compact manifolds, see for instance Aubin \cite[\S 4.2]{Aubin}, that for every submanifold $F_0:L\rightarrow M$ there exists a one-parameter family $\{F(t,\cdot)\}_{t\in(0,T)}$ of submanifolds $F(t,\cdot):L\rightarrow M$, which is continuous up to $t=0$ and satisfies \eq{GMCF}. Less obvious, however, is the fact that if $F_0:L\rightarrow M$ is a Lagrangian submanifold, then $F(t,\cdot):L\rightarrow M$ is a Lagrangian submanifold for every $t\in(0,T)$. The original proof of the fact that $F(t,\cdot):L\rightarrow M$ is a Lagrangian submanifold for $t\in(0,T)$ uses long computations in local coordinates and the parabolic maximum principle. In \S\ref{GULLI} we show how the generalized Lagrangian mean curvature flow can be integrated to a flow of functions on $L$ rather than of embeddings of $L$ into $M$. Using this interpretation of the generalized Lagrangian mean curvature flow we present in \S\ref{GULLI} a new short time existence proof for the generalized Lagrangian mean curvature flow when $F_0:L\rightarrow M$ is compact

The idea of the Lagrangian mean curvature flow goes already back to Oh \cite{Oh} in the early nineties. The existence of the Lagrangian mean curvature flow, however, was first proved by Smoczyk \cite[Thm. 1.9]{Smoczyk} for the case when $M$ is a K\"ahler--Einstein manifold. Recently there has been interest in generalizing the idea of the Lagrangian mean curvature flow. This led to the notion of generalized Lagrangian mean curvature flows first introduced by the author in \cite{Behrndt}, when $M$ is a K\"ahler manifold that is almost Einstein, and later by Smoczyk and Wang \cite{SmoczykWang2}, when $M$ is an almost K\"ahler manifold that admits an Einstein connection.

The next proposition discusses another definition of the generalized Lagrangian mean curvature flow, which at least in the case when $F:L\rightarrow M$ is a compact Lagrangian submanifold, is equivalent to the previous one.

\begin{Prop}\label{GMLCF2}
Let $F_0:L\rightarrow M$ be a compact Lagrangian submanifold, and $\{F(t,\cdot)\}_{(0,T)}$ a one-parameter family of Lagrangian submanifolds $F(t,\cdot):L\rightarrow M$, which is continuous up to $t=0$ and evolves by generalized Lagrangian mean curvature flow with initial condition $F_0:L\rightarrow M$. Then there exists a one-parameter family $\{\varphi(t,\cdot)\}_{t\in(0,T)}$ of diffeomorphisms of $L$, which is continuous up to $t=0$, such that the following holds. The map $\varphi(0,\cdot):L\rightarrow L$ is the identity on $L$ and, if we define a one-parameter family $\{\tilde{F}(t,\cdot)\}_{t\in(0,T)}$ of Lagrangian submanifolds $\tilde{F}(t,\cdot):L\rightarrow M$ by
\begin{equation*}
\tilde{F}(t,x)=F(t,\varphi(t,x))\quad\mbox{for }(t,x)\in(0,T)\times L,
\end{equation*}
then $\{\tilde{F}(t,\cdot)\}_{t\in(0,T)}$ is continuous up to $t=0$ and satisfies 
\begin{equation}\label{GMCF2}
\begin{split}
&\frac{\partial\tilde{F}}{\partial t}(t,x)=K(t,x)\quad\mbox{for }(t,x)\in (0,T)\times L,\\&\tilde{F}(0,x)=F_0(x)\quad\quad\;\mbox{for }x\in L.
\end{split}
\end{equation}
\end{Prop}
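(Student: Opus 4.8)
The plan is to absorb the tangential component of the velocity $\partial F/\partial t$ into a time-dependent reparametrization of $L$, leaving only the normal component, which by hypothesis \eq{GMCF} equals the generalized mean curvature vector field. First I would decompose the velocity field along $F(t,\cdot)$ for $t\in(0,T)$ as
\[
\frac{\partial F}{\partial t}(t,x) = K(t,x) + \mathrm dF(t,x)\big(X(t,x)\big),
\]
where $K(t,x)=\pi_{\nu L}(\partial_t F)$ is the normal part and $X(t,x)\in T_xL$ is the unique tangent vector with $\mathrm dF(t,x)(X(t,x))$ equal to the tangential part of $\partial_t F$; such an $X$ exists and is unique because $\mathrm dF(t,x)$ is an isomorphism onto $\mathrm dF(TL)$. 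Since $F$ is smooth on $(0,T)\times L$, the family $\{X(t,\cdot)\}$ is a smooth time-dependent vector field on $L$, and the whole content of the proposition is that its flow reparametrizes away the tangential motion.

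The next step is to define $\{\varphi(t,\cdot)\}$ as the flow of $-X$, i.e. the solution of the ordinary differential equation
\[
\frac{\partial\varphi}{\partial t}(t,x) = -X\big(t,\varphi(t,x)\big),\qquad \varphi(0,\cdot)=\mathrm{id}_L.
\]
Granting its existence, the verification that $\tilde F(t,x)=F(t,\varphi(t,x))$ satisfies \eq{GMCF2} is a direct chain-rule computation:
\[
\frac{\partial\tilde F}{\partial t}(t,x) = \frac{\partial F}{\partial t}(t,\varphi(t,x)) + \mathrm dF(t,\varphi(t,x))\Big(\frac{\partial\varphi}{\partial t}(t,x)\Big).
\]
Substituting the decomposition from the first step evaluated at $\varphi(t,x)$ together with the defining ODE, the two tangential terms cancel and one is left with $K(t,\varphi(t,x))$, which is precisely the generalized mean curvature vector field of the reparametrized submanifold $\tilde F(t,\cdot)$ at $x$, since $\tilde F(t,\cdot)$ and $F(t,\cdot)$ have the same image. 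Moreover each $\tilde F(t,\cdot)$ is a reparametrization of a Lagrangian submanifold and hence itself Lagrangian.

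For the existence of the flow I would invoke compactness of $L$: a time-dependent vector field that is continuous in $t$ and Lipschitz in the space variable on a compact manifold generates a flow of diffeomorphisms on its whole interval of definition, so $\{\varphi(t,\cdot)\}_{t\in(0,T)}$ is indeed a one-parameter family of diffeomorphisms of $L$ (the inverse being the reverse flow). The step I expect to be the main obstacle is the behaviour at $t=0$: the vector field $X(t,\cdot)$ is a priori only controlled for $t\in(0,T)$, so one must verify that the flow extends continuously to $t=0$ with $\varphi(0,\cdot)=\mathrm{id}_L$ and that $\tilde F$ is then continuous up to $t=0$. I would handle this by showing that $X$ extends continuously (or at least integrably) up to $t=0$ — which follows from the smoothness of the compact initial submanifold $F_0$ and parabolic regularity of the flow near $t=0$, giving $\varphi(t,x)-x=-\int_0^t X(s,\varphi(s,x))\,\mathrm ds\to 0$ uniformly — and then appealing to continuous dependence of the flow on the initial time. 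The continuity of $\tilde F$ up to $t=0$, together with $\tilde F(0,\cdot)=F_0$, then follows from that of $F$ and of $\varphi$.
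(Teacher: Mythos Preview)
Your argument is correct and is the standard way to prove this statement. The paper itself does not give a proof of Proposition~\ref{GMLCF2}; it merely states the result and then discusses its relation to the definition~\eq{GMCF}, so there is nothing to compare against beyond noting that your tangential/normal decomposition and reparametrization by the flow of $-X$ is exactly the well-known mechanism the proposition is recording. Your treatment of the $t=0$ endpoint is also adequate for the level of detail the paper works at.
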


Often \eq{GMCF2} is used for the definition of the generalized Lagrangian mean curvature flow. Proposition \ref{GMLCF2} shows that \eq{GMCF} and \eq{GMCF2} are equivalent up to a family of tangential diffeomorphisms, provided $L$ is compact. It is important to note, however, that in general \eq{GMCF} and \eq{GMCF2} are not equivalent. For instance in the generalized Lagrangian mean curvature flow with isolated conical singularities, which we study in \S\ref{DITISKLASSE}, we will find a solution to \eq{GMCF}. The solution will then consist of Lagrangian submanifolds with isolated conical singularities and the singularities move around in the ambient space. In this case it is in general not possible to reparametrize a solution of \eq{GMCF} by diffeomorphisms on $L$ in order to get a solution of \eq{GMCF2}. Note that if we are given solutions $\{F(t,\cdot)\}_{(0,T)}$ to the generalized Lagrangian mean curvature flow \eq{GMCF} and $\{\tilde{F}(t,\cdot)\}_{t\in(0,T)}$ to \eq{GMCF2}, then $F(t,L)=\tilde{F}(t,L)$ for $t\in(0,T)$. So $F(t,\cdot):L\rightarrow M$ and $\tilde{F}(t,\cdot):L\rightarrow M$ have the same image for each $t\in(0,T)$.

\subsection{Short time existence of the flow}\label{GULLI}

The short time existence of the generalized Lagrangian mean curvature flow when the initial Lagrangian submanifold is compact is established in the following theorem.

\begin{Thm}\label{GMCFShortTime}
Let $F_0:L\rightarrow M$ be a compact Lagrangian submanifold in an almost Calabi--Yau manifold $M$. Then there exists $T>0$ and a one-parameter family $\{F(t,\cdot)\}_{t\in(0,T)}$ of Lagrangian submanifolds $F(t,\cdot):L\rightarrow M$, which is continuous up to $t=0$ and evolves by generalized Lagrangian mean curvature flow with initial condition $F_0:L\rightarrow M$.
\end{Thm}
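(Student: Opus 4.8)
The plan is to avoid working with the embedding equation \eq{GMCF} directly---which is only degenerate parabolic because its tangential part is unconstrained---and instead to integrate the flow to a genuinely parabolic scalar equation for a time-dependent function on $L$, using the Lagrangian neighbourhood to ensure that every competitor is automatically Lagrangian. Concretely, I would first fix, by Theorem \ref{LagrangianNeighbourhood}, a Lagrangian neighbourhood $\Phi_L:U_L\rightarrow M$ for the compact Lagrangian $F_0$, so that $\Phi_L(x,0)=F_0(x)$, and look for the solution in the form $F(t,\cdot)=\Phi_L\circ\beta(t,\cdot)$ for a one-parameter family of closed one-forms $\beta(t)$ on $L$ with $\beta(0)=0$ and $\Gamma_{\beta(t)}\subset U_L$. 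Since $\Phi_L^*(\omega)=\hat\omega$ and each $\beta(t)$ is closed, every such $F(t,\cdot)$ is Lagrangian for free; this is exactly the point that the classical proof has to establish separately with a maximum principle.

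Next I would translate \eq{GMCF} into an evolution equation for $\beta$. Applying Lemma \ref{Var} with $\eta=\partial_t\beta$ shows that the variation field of $t\mapsto\Phi_L\circ\beta(t)$ is $\partial_tF=-\alpha^{-1}(\partial_t\beta)+V(\partial_t\beta)$ with $V(\partial_t\beta)$ tangential, so its normal part is precisely $-\alpha^{-1}(\partial_t\beta)$. Hence \eq{GMCF} is equivalent to $-\alpha^{-1}(\partial_t\beta)=K$, and applying the fibrewise isomorphism $\alpha$ and then Proposition \ref{GeneralizedMeanCurvatureForm} gives
\[
\frac{\partial\beta}{\partial t}=-\alpha_K=\mathrm d[\theta(\Phi_L\circ\beta)],\qquad \beta(0)=0.
\]
If $F_0$ has zero Maslov class I can write $\beta(t)=\mathrm du(t)$ and integrate this to the scalar equation $\partial_t u=\theta(\Phi_L\circ\mathrm du)$ with $u(0)=0$, the right-hand side being a single-valued function because the Lagrangian angle then lifts. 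For nonzero Maslov class the closed one-form $\mathrm d\theta$ is no longer exact, and I would absorb the linear cohomological drift by writing $\beta(t)=t\,\sigma+\mathrm du(t)$ for a fixed closed representative $\sigma$ of $\mu_{F_0}$; this ansatz is consistent precisely because the Maslov class is preserved along the flow, so that $\mathrm d[\theta(\Phi_L\circ\beta(t))]-\sigma$ remains exact and again yields a scalar equation for $u$.

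It then remains to solve this scalar equation, and proving that it is parabolic is the heart of the matter. I would linearise $u\mapsto\theta(\Phi_L\circ\mathrm du)$: an infinitesimal change of $u$ by $v$ moves the graph in the normal direction $-\alpha^{-1}(\mathrm dv)=J\,\mathrm dF(\nabla v)$, that is, by the Hamiltonian deformation generated by $v$, under which the first variation of the Lagrangian angle is the Laplace--Beltrami operator $\Delta_{g(t)}v$ of the induced metric, up to first-order terms coming from $\psi$. In the flat graph model $\theta=\sum_i\arctan\lambda_i$ in the Hessian eigenvalues $\lambda_i$ of $u$, with linearisation $\sum_i(1+\lambda_i^2)^{-1}\partial_i^2 v$, so the principal symbol is that of a positive-definite second-order operator and the equation is quasilinear and strictly parabolic in forward time. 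Standard short time existence theory for quasilinear parabolic equations on compact manifolds, as in Aubin \cite{Aubin}, then produces a solution $u$ on some $[0,T)$ that is continuous up to $t=0$; setting $F(t,\cdot)=\Phi_L\circ\beta(t,\cdot)$ recovers a family of Lagrangian submanifolds solving \eq{GMCF} with $F(0,\cdot)=F_0$. The main obstacle is this parabolicity computation together with the bookkeeping of the Maslov drift, since the geometric naturality of the flow has to be extracted from the nonlinear dependence of the Lagrangian angle on the Hessian of $u$.
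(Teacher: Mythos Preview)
Your proposal is correct and follows essentially the same route as the paper: fix a Lagrangian neighbourhood, write the flow as $F(t,\cdot)=\Phi_L\circ(\mathrm du(t,\cdot)+t\beta_0)$ with $\beta_0$ representing the Maslov class, reduce \eq{GMCF} via Lemma \ref{Var} and Proposition \ref{GeneralizedMeanCurvatureForm} to the scalar Cauchy problem $\partial_tu=\Theta(\Phi_L\circ(\mathrm du+\beta))$, and then linearise to see the Laplacian as principal part and invoke standard short time existence. One small correction: the scalar equation is \emph{fully nonlinear} in the Hessian of $u$ (as your own $\sum_i\arctan\lambda_i$ model shows), not quasilinear; this does not affect your argument, since the linearisation/inverse function theorem approach you outline applies equally well, and is exactly what the paper uses.
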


As mentioned before we now present a new proof of Theorem \ref{GMCFShortTime}. The idea of the proof is based on two observations. Firstly, when $F_0:L\rightarrow M$ is a compact Lagrangian submanifold, then by Theorem \ref{LagrangianNeighbourhood} there exists a Lagrangian neighbourhood $\Phi_L:U_L\rightarrow M$ of $F_0:L\rightarrow M$ and every Lagrangian submanifold $\tilde{F}:L\rightarrow M$ that is $C^1$-close to $F_0:L\rightarrow M$ is, after reparametrizing by a diffeomorphism on $L$, given by $\Phi_L\circ\beta:L\rightarrow M$ for some unique closed one-form $\beta$ on $L$. Secondly, by Proposition \ref{GeneralizedMeanCurvatureForm} the generalized mean curvature form of $F_0:L\rightarrow M$ satisfies $\alpha_K=-\mathrm d[\theta(F_0)]$. Assume for the moment that $F_0:L\rightarrow M$ has zero Maslov class, then $\alpha_K$ is exact and the Lagrangian mean curvature flow (if it exists) is a Hamiltonian deformation. Therefore we expect that the Lagrangian mean curvature flow of $F_0:L\rightarrow M$ (if it exists) should be equivalent to the existence of a solution to an evolution equation of the form $\partial_tu=\theta(\Phi_L\circ\mathrm du)$ for a function $u$ on $(0,T)\times L$ for $T>0$.

Let us now carry out these ideas in more detail. To this end let $F_0:L\rightarrow M$ be a compact Lagrangian submanifold in an almost Calabi--Yau manifold $M$ and let $\Phi_L:U_L\rightarrow M$ be a Lagrangian neighbourhood for $F_0:L\rightarrow M$ as given by Theorem \ref{LagrangianNeighbourhood}. Let $\mu_{F_0}$ be the Maslov class of $F_0:L\rightarrow M$, and choose a smooth map $\alpha_0:L\rightarrow\mathbb{R}/\pi\mathbb{Z}$ with $\mathrm d\alpha_0\in\mu_{F_0}$. Denote $\beta_0=\mathrm d\alpha_0$. Then we can choose a smooth lift $\Theta(F_0):L\rightarrow\mathbb{R}$ of $\theta(F_0)-\alpha_0:L\rightarrow\mathbb{R}/\pi\mathbb{Z}$. In particular $\Theta(F_0)$ satisfies $\mathrm d[\Theta(F_0)]=\mathrm d[\theta(F_0)]-\beta_0$. Moreover, if $\{\eta(s)\}_{s\in(-\varepsilon,\varepsilon)}$, $\varepsilon>0$, is a smooth family of closed one-forms defined on $L$ with $\Gamma_{\eta(s)}\subset U_L$ for $s\in(-\varepsilon,\varepsilon)$ and $\eta(0)=0$, then we can choose $\Theta(\Phi_L\circ\eta(s))$ to depend smoothly on $s\in(-\varepsilon,\varepsilon)$. 
 
We now define a nonlinear differential operator $P$ as follows. Define a one-parameter family $\{\beta(t)\}_{t\in(0,T)}$ of closed one-forms on $L$ by $\beta(t)=t\beta_0$ for $t\in(0,T)$. Then $\{\beta(t)\}_{t\in(0,T)}$ extends continuously to $t=0$ with $\beta(0)=0$. Choose $T>0$ small enough so that $\Gamma_{\beta(t)}\subset U_L$ for $t\in(0,T)$, and define the domain of $P$ by
\begin{align*}
\mathcal{D}&=\bigl\{u\in C^{\infty}((0,T)\times L)\;:\;u\mbox{ extends continuously to }t=0\quad\quad\\&\quad\quad\quad\quad\quad\quad\quad\quad\quad\quad\quad\quad\quad\quad\mbox{and }\Gamma_{\mathrm du(t,\cdot)+\beta(t)}\subset U_L\mbox{ for }t\in(0,T)\bigr\}.
\end{align*}
Then the operator $P$ is defined by
\[
P:\mathcal{D}\rightarrow C^{\infty}((0,T)\times L),\quad P(u)=\frac{\partial u}{\partial t}-\Theta(\Phi_L\circ(\mathrm du+\beta)).
\] 
If $u\in\mathcal{D}$, then $\Gamma_{\mathrm du(t,\cdot)+\beta(t)}\subset U_L$ for every $t\in(0,T)$, and the Lagrangian submanifold $\Phi_L\circ(\mathrm du(t,\cdot)+\beta(t)):L\rightarrow M$ is well defined for every $t\in(0,T)$. Hence $\Theta(\Phi_L\circ(\mathrm du(t,\cdot)+\beta(t)))$ is also well defined for every $t\in(0,T)$. 

We now consider the Cauchy problem
\begin{align}\label{CauchyProblem}
\begin{split}
&Pu(t,x)=0\;\;\;\;\mbox{for }(t,x)\in(0,T)\times L,\\ 
&u(0,x)=0\quad\;\;\;\mbox{for }x\in L.
\end{split}
\end{align}
If we are given a solution $u\in\mathcal{D}$ of the Cauchy problem \eq{CauchyProblem}, then we obtain a solution to the generalized Lagrangian mean curvature flow. In fact, the following proposition is easily checked using Proposition \ref{GeneralizedMeanCurvatureForm} and Lemma \ref{Var}. 

\begin{Prop}\label{IntegratingGMCF}
Let $u\in\mathcal{D}$ be a solution of \eq{CauchyProblem} and define a one-parameter family $\{F(t,\cdot)\}_{t\in(0,T)}$ of submanifolds of $M$ by
\begin{equation*}
F(t,\cdot):L\longrightarrow M,\quad F(t,\cdot)=\Phi_L\circ(\mathrm du(t,\cdot)+\beta(t)).
\end{equation*}
Then $\{F(t,\cdot)\}_{t\in(0,T)}$ is a one-parameter family of Lagrangian submanifolds, continuous up to $t=0$, which evolves by generalized Lagrangian mean curvature flow with initial condition $F_0:L\rightarrow M$.
\end{Prop}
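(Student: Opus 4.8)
The plan is to verify the two conditions in the definition \eqref{GMCF} of the generalized Lagrangian mean curvature flow directly, using Lemma \ref{Var} to compute the velocity of $F(t,\cdot)$ and Proposition \ref{GeneralizedMeanCurvatureForm} to identify its generalized mean curvature vector field. Write $\gamma(t)=\mathrm du(t,\cdot)+\beta(t)$ for the one-form defining $F(t,\cdot)=\Phi_L\circ\gamma(t)$. It is closed because $\mathrm du(t,\cdot)$ is exact and $\beta(t)=t\beta_0$ is closed, and since $\Phi_L^*(\omega)=\hat\omega$ the submanifold $\Phi_L\circ\gamma(t)$ pulls $\omega$ back to $-\mathrm d\gamma(t)=0$, so each $F(t,\cdot)$ is Lagrangian. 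Moreover the domain $\mathcal D$ together with the initial condition in \eqref{CauchyProblem} forces $u(0,\cdot)=0$ and hence $\gamma(0)=0$, giving $F(0,\cdot)=\Phi_L(\cdot,0)=F_0$.

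For the evolution equation I would differentiate in time. Fixing $t$ and applying Lemma \ref{Var} to the family obtained by varying $\gamma(t)$ in the direction of the closed one-form $\dot\gamma(t)=\mathrm d(\partial_t u)(t,\cdot)+\beta_0$, the velocity of the flow is $\partial_t F(t,\cdot)=-\alpha^{-1}(\dot\gamma(t))+V(\dot\gamma(t))$, where $V(\dot\gamma(t))$ is tangential. Since $-\alpha^{-1}(\dot\gamma(t))$ is a normal field and the tangential term is annihilated by $\pi_{\nu L}$, this yields $\pi_{\nu L}(\partial_t F)(t,\cdot)=-\alpha^{-1}\bigl(\mathrm d(\partial_t u)(t,\cdot)+\beta_0\bigr)$.

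On the other hand, by Proposition \ref{GeneralizedMeanCurvatureForm} the generalized mean curvature form of $F(t,\cdot)$ satisfies $\alpha_K=\alpha(K)=-\mathrm d[\theta(F(t,\cdot))]$, so $K(t,\cdot)=-\alpha^{-1}\bigl(\mathrm d[\theta(F(t,\cdot))]\bigr)$ by the fibrewise invertibility of $\alpha$ recorded after \eqref{Defalpha}. By the choice of the lift one has $\mathrm d[\Theta(F(t,\cdot))]=\mathrm d[\theta(F(t,\cdot))]-\beta_0$, hence $\mathrm d[\theta(F(t,\cdot))]=\mathrm d[\Theta(F(t,\cdot))]+\beta_0$. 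Comparing with the previous paragraph, the flow equation $\pi_{\nu L}(\partial_t F)=K$ is therefore equivalent, after applying the isomorphism $\alpha$ and cancelling $\beta_0$, to $\mathrm d(\partial_t u)(t,\cdot)=\mathrm d[\Theta(F(t,\cdot))]$. But this is exactly the spatial exterior derivative of the equation $Pu=0$, which reads $\partial_t u=\Theta(\Phi_L\circ(\mathrm du+\beta))=\Theta(F(t,\cdot))$; since $\mathrm d$ commutes with $\partial_t$ for the smooth function $u$, the identity holds and the flow equation is verified.

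The computation itself is short, and the points requiring care are bookkeeping rather than analysis. One must check that $-\alpha^{-1}$ indeed lands in $\nu L$ and that the tangential correction $V$ drops out under $\pi_{\nu L}$, and one must apply Lemma \ref{Var} to a genuinely $t$-dependent family $\gamma(t)$ rather than to a fixed linear path; the latter is legitimate because the variation vector field at time $t$ depends only on $\gamma(t)$ and $\dot\gamma(t)$, so the lemma applies to the frozen linear family $\gamma(t)+s\dot\gamma(t)$. The only genuinely nontrivial point is the continuity of the family up to $t=0$: membership of $u$ in $\mathcal D$ asserts only continuity of $u$, so one must additionally invoke the regularity of the solution to conclude that $\mathrm du(t,\cdot)\to 0$, and hence $F(t,\cdot)\to F_0$, as $t\to 0$.
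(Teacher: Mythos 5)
Your argument is correct and follows exactly the route the paper intends: the paper omits the proof of Proposition \ref{IntegratingGMCF}, stating only that it is ``easily checked using Proposition \ref{GeneralizedMeanCurvatureForm} and Lemma \ref{Var}'', and you carry out precisely that check, including the correct cancellation of $\beta_0$ between $\dot\gamma(t)$ and $\mathrm d[\theta(F)]=\mathrm d[\Theta(F)]+\beta_0$. Your closing remarks on freezing the time-dependent family before applying Lemma \ref{Var} and on the $C^1$-continuity needed at $t=0$ are sensible points of care, not deviations from the paper's approach.
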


From Proposition \ref{IntegratingGMCF} it follows that the short time existence problem of the generalized Lagrangian mean curvature flow for a compact Lagrangian submanifold $F_0:L\rightarrow M$ is equivalent to the short time existence of solutions to the Cauchy problem \eq{CauchyProblem}. Note in particular that \eq{CauchyProblem} is a fully nonlinear equation (in fact it is parabolic, as we will show below) of a scalar function only. Therefore we have ``integrated" the generalized Lagrangian mean curvature flow and got rid of the system of partial differential equations in \eq{GMCF}. Also note that if $F_0:L\rightarrow M$ has zero Maslov class, then we can choose $\beta_0=0$ in Proposition \ref{IntegratingGMCF}.

We now outline the proof of Theorem \ref{GMCFShortTime}. It suffices to show that the Cauchy problem \eq{CauchyProblem} admits a solution for a short time. Studying short time existence problems for scalar nonlinear parabolic equations is very similar to the study of elliptic deformation problems. In fact, once one can show that the operator $P$ is a smooth operator between certain Banach manifolds and that its linearization at the initial condition is an isomorphism, one can use the Inverse Function Theorem for Banach manifolds to show that for a short time there exists a solution with low regularity to the nonlinear equation. Thereafter, using standard regularity theory for parabolic equations, one can show that the solution is in fact is smooth. We will not enter the details here, but refer the reader to the author's DPhil thesis \cite[\S 5]{Behrndt1}, where the analysis of the Cauchy problem \eq{CauchyProblem} is carried out in full detail. Nevertheless we want to state the next lemma which gives a formula for the linearization of $P$ at the initial condition and also verifies that $P$ is in fact a nonlinear parabolic differential operator.

\begin{Lem}\label{LIN}
The linearization of the operator $P:\mathcal{D}\rightarrow C^{\infty}((0,T)\times L)$ at the initial condition is given by
\[
\mathrm dP(0)(u)=\frac{\partial u}{\partial t}-\Delta u+m\mathrm d\psi_{\beta}(\nabla u)+\mathrm d\theta_{\beta}(\hat{V}(\mathrm du)),
\]
where $u$ is a function on $(0,T)\times L$, $\psi_{\beta}=(\Phi_L\circ\beta)^*(\psi)$, $\theta_{\beta}=\theta(\Phi_L\circ\beta)$, $\hat{V}$ is defined in Lemma \ref{Var}, and the Laplace operator and $\nabla$ are computed using the time dependent Riemannian metric $(\Phi_L\circ\beta)^*(g)$ on $L$.
\end{Lem}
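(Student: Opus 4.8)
The plan is to reduce everything to the first variation of the Lagrangian angle and then to evaluate that variation along the explicit family supplied by Lemma \ref{Var}. Fix $t$ and abbreviate $F_0=\Phi_L\circ\beta$ and $F_s=\Phi_L\circ(\beta+s\,\mathrm du)$. Differentiating $P$ at the initial condition means differentiating $P(su)$ at $s=0$; since the chosen lift $\Theta$ of $\theta-\alpha_0$ depends smoothly on $s$ while $\alpha_0$ is independent of $s$, I would first record that
\[
\mathrm dP(0)(u)=\frac{\partial u}{\partial t}-\frac{\mathrm d}{\mathrm ds}\Big|_{s=0}\theta(F_s),
\]
so that the entire content is the computation of $\dot\theta:=\frac{\mathrm d}{\mathrm ds}|_{s=0}\theta(F_s)$. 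To get at this I would differentiate the defining relation $F_s^*(\Omega)=e^{i\theta(F_s)+mF_s^*(\psi)}\,\mathrm dV_{F_s^*(g)}$. Writing $X$ for the variation vector field of the family and using that $\Omega$ is closed (being a holomorphic volume form), Cartan's formula gives $\frac{\mathrm d}{\mathrm ds}F_s^*(\Omega)=\mathrm d\bigl(F_0^*(X\,\lrcorner\,\Omega)\bigr)$. Dividing by $F_0^*(\Omega)$ and subtracting the first variation of $\mathrm dV_{F_s^*(g)}$, the real contributions (the volume term and the $mF_s^*(\psi)$ factor) are invisible in the imaginary part, so one is left with $\dot\theta=\imagpart\bigl[\mathrm d(F_0^*(X\,\lrcorner\,\Omega))/F_0^*(\Omega)\bigr]$.

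Now I would insert the variation field from Lemma \ref{Var}, namely $X=-\alpha^{-1}(\mathrm du)+V(\mathrm du)$ with normal part $\xi=-\alpha^{-1}(\mathrm du)$ and tangential part $V(\mathrm du)=\mathrm dF_0(\hat V(\mathrm du))$, and treat the two parts separately. For the tangential part, writing $\hat V=\hat V(\mathrm du)$ and using $F_0^*(\mathrm dF_0(\hat V)\,\lrcorner\,\Omega)=\hat V\,\lrcorner\,F_0^*(\Omega)$ together with $\mathrm d(\hat V\,\lrcorner\,\mu)=\mathcal L_{\hat V}\mu$ for the top-form $\mu=F_0^*(\Omega)$, the imaginary part collapses to the first-order drift $\mathrm d\theta_\beta(\hat V(\mathrm du))$, which accounts for the last term of the asserted formula. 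For the normal part, $\alpha_\xi=\alpha(\xi)=-\mathrm du$, and the identity $\alpha^{-1}(\mathrm du)=-J(\mathrm dF_0(\nabla u))$ from \eq{Defalpha} lets me write $\xi$ in a unitary frame adapted to the Lagrangian $F_0$; extracting $\imagpart[\mathrm d(F_0^*(\xi\,\lrcorner\,\Omega))/F_0^*(\Omega)]$ then produces the weighted divergence $\mathrm d^*\alpha_\xi+m\,\alpha_\xi(\nabla\psi_\beta)$ of $\alpha_\xi$, the weight $e^{m\psi}$ in the normalisation of $\Omega$ being responsible for the $m\,\alpha_\xi(\nabla\psi_\beta)$ correction. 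Substituting $\alpha_\xi=-\mathrm du$ gives $\mathrm d^*(-\mathrm du)=\Delta u$ and $m\,\alpha_\xi(\nabla\psi_\beta)=-m\,\mathrm d\psi_\beta(\nabla u)$, so the normal part contributes $\Delta u-m\,\mathrm d\psi_\beta(\nabla u)$ to $\dot\theta$. Here $\Delta$, $\nabla$, $\mathrm d^*$ and $\psi_\beta=(\Phi_L\circ\beta)^*(\psi)$ are all taken with respect to the $t$-dependent metric $F_0^*(g)=(\Phi_L\circ\beta)^*(g)$, exactly as in the statement.

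Assembling $\dot\theta$ and substituting into the expression for $\mathrm dP(0)(u)$ yields the stated formula, after keeping careful track of the signs coming from $P(u)=\partial_t u-\Theta(\cdots)$ and from the decomposition in Lemma \ref{Var}. The leading part of $\mathrm dP(0)$ is $\partial_t u-\Delta u$, which is strictly parabolic, so this computation simultaneously verifies that $P$ is a nonlinear parabolic operator, the terms $m\,\mathrm d\psi_\beta(\nabla u)$ and $\mathrm d\theta_\beta(\hat V(\mathrm du))$ being lower-order perturbations. I expect the main obstacle to be the normal variation: identifying $\imagpart[\mathrm d(F_0^*(\xi\,\lrcorner\,\Omega))/F_0^*(\Omega)]$ as the weighted divergence of $\alpha_\xi$ is a genuine computation in a Lagrangian-adapted frame, and in particular one must correctly recover the almost Calabi--Yau weight $m\,\mathrm d\psi$ from the factor $e^{m\psi}$ in the definition of the Lagrangian angle. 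Here Proposition \ref{GeneralizedMeanCurvatureForm} is a useful consistency check, since deforming instead by $K$ (so that $\alpha_\xi=\alpha_K=-\mathrm d[\theta]$) must reproduce the generalised-angle evolution $\dot\theta=\Delta\theta-m\,\mathrm d\psi(\nabla\theta)$. An essentially equivalent alternative is to differentiate the static relation $\alpha_{K_s}=-\mathrm d[\theta(F_s)]$ of Proposition \ref{GeneralizedMeanCurvatureForm} in $s$ and integrate the resulting identity for $\mathrm d\dot\theta$; this trades the volume-form variation for the linearisation of the generalised mean curvature form, which is of comparable difficulty.
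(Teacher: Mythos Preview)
The paper does not prove this lemma; it is merely stated, with the detailed analysis deferred to the author's thesis \cite{Behrndt1}. Your approach is the natural one and is essentially what any proof must do: reduce to the first variation of the Lagrangian angle, split the variation field from Lemma~\ref{Var} into its normal and tangential parts, and compute each separately. The normal-part computation you outline---identifying the imaginary part of $\mathrm d(F_0^*(\xi\,\lrcorner\,\Omega))/F_0^*(\Omega)$ with the $e^{m\psi}$-weighted divergence $\mathrm d^*\alpha_\xi + m\,\alpha_\xi(\nabla\psi_\beta)$---is the right mechanism, and your consistency check against the evolution of $\theta$ under generalized mean curvature flow via Proposition~\ref{GeneralizedMeanCurvatureForm} is a good sanity test.

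One caution on the tangential term. Your reparametrisation argument yields $+\mathrm d\theta_\beta(\hat V(\mathrm du))$ as the tangential contribution to $\dot\theta$; after $\mathrm dP(0)(u)=\partial_t u-\dot\theta$ this would give $-\mathrm d\theta_\beta(\hat V(\mathrm du))$, not the $+$ in the stated formula. The paper's later Lemma~\ref{HELP} (in the conical, $\psi\equiv 0$ setting) records the linearisation of $\theta$ in $u$ as $\Delta u - \mathrm d[\theta(F_0)](\hat V(\mathrm du))$, which is consistent with Lemma~\ref{LIN} but not with your tangential computation as written. This is only a sign and does not affect the parabolicity conclusion, but it is worth pinning down; it may stem from exactly how $\hat V$ is extracted in Lemma~\ref{Var}, and your hedge ``after keeping careful track of the signs'' is well placed.
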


\noindent From Lemma \ref{LIN} we see that the linearization of $P$ is a second order parabolic differential operator and thus, as expected, $P$ is a nonlinear parabolic differential operator.

\section{Linear parabolic equations on Riemannian manifolds with conical singularities}\label{ANALYSIS}

In this chapter we review some results about linear parabolic equations on Riemannian manifolds with conical singularities. We follow closely the author's paper \cite{Behrndt2}, and in fact most parts of this section are taken from \cite{Behrndt2}. As mentioned in the end of \S\ref{GULLI} it is essential first to understand linear parabolic equations before studying short time existence problems for their nonlinear counterparts. We think that a good understanding of the material of this section is important in order to understand the short time existence of the generalized Lagrangian mean curvature flow with conical singularities modelled on stable special Lagrangian cones. We especially recommend the reader to take note of the notion of discrete asymptotics in \S\ref{DiscreteASYMPTOTICS} and how they are involved in the study of linear parabolic equations on Riemannian manifolds with conical singularities, see Theorem \ref{SobolevRegularityHeat} below. 

Let us first define the notion of Riemannian cones, Riemannian manifolds with conical singularities, and finally the notion of a radius function. We begin with the definition of Riemannian cones.

\begin{Def}\label{RiemannianCone}
Let $(\Sigma,h)$ be an $(m-1)$-dimensional compact and connected Riemannian manifold, $m\geq 1$. Let $C=(\Sigma\times(0,\infty))\sqcup\{0\}$ and $C'=\Sigma\times(0,\infty)$ and write a general point in $C'$ as $(\sigma,r)$. Define a Riemannian metric on $C'$ by $g=\mathrm dr^2+r^2h$. Then we say that $(C,g)$ is the Riemannian cone over $(\Sigma,h)$ with Riemannian cone metric $g$.
\end{Def}

Next we define Riemannian manifolds with conical singularities.

\begin{Def}\label{DefinitionConicalSingularities}
Let $(M,d)$ be a metric space, $x_1,\ldots,x_n$ distinct points in $M$, and denote $M'=M\backslash\{x_1,\ldots,x_n\}$. Assume that $M'$ has the structure of a smooth and connected $m$-dimensional manifold, and that we are given a Riemannian metric $g$ on $M'$ that induces the metric $d$ on $M'$. Then we say that $(M,g)$ is an $m$-dimensional Riemannian manifold with conical singularities $x_1,\ldots,x_n$, if the following hold.\vspace{0.17cm}

\begin{compactenum}
\item[{\rm(i)}] We are given $R>0$ such that $d(x_i,x_j)>2R$ for $1\leq i<j\leq n$ and compact and connected $(m-1)$-dimensional Riemannian manifolds $(\Sigma_i,h_i)$ for $i=1,\ldots,n$. Denote by $(C_i,g_i)$ the Riemannian cone over $(\Sigma_i,h_i)$ for $i=1,\ldots,n$. 
\item[{\rm(ii)}] For $i=1,\ldots,n$ denote $S_i=\{x\in M\;:\;0<d(x,x_i)<R\}$. Then there exist $\mu_i\in\mathbb{R}$ with $\mu_i>2$ and diffeomorphisms $\phi_i:\Sigma_i\times(0,R)\rightarrow S_i$, such that
\begin{equation*}
\left|\nabla^k(\phi_i^*(g)-g_i)\right|=O(r^{\mu_i-2-k})\quad\mbox{as }r\longrightarrow 0\;\mbox{for }k\in\mathbb{N}
\end{equation*}
and $i=1,\ldots,n$. Here $\nabla$ and $|\cdot|$ are computed using the Riemannian cone metric $g_i$ on $\Sigma_i\times(0,R)$ for $i=1,\ldots,n$.\vspace{0.17cm}
\end{compactenum}
Additionally, if $(M,d)$ is a compact metric space, then we say that $(M,g)$ is a compact Riemannian manifold with conical singularities. 
\end{Def}

Finally we introduce the notion of a radius function.

\begin{Def}
Let $(M,g)$ be a Riemannian manifold with conical singularities as in Definition \ref{DefinitionConicalSingularities}. A radius function on $M'$ is a smooth function $\rho:M'\rightarrow(0,1]$, such that $\rho\equiv 1$ on $M'\backslash\bigcup_{i=1}^nS_i$ and
\begin{equation*}
|\phi_i^*(\rho)-r|=O(r^{1+\varepsilon})\quad\mbox{as }r\longrightarrow 0
\end{equation*}
for some $\varepsilon>0$. Here $|\cdot|$ is computed using the Riemannian cone metric $g_i$ on $\Sigma_i\times(0,R)$ for $i=1,\ldots,n$. A radius function always exists.
\end{Def}

If $\rho$ is a radius function on $M'$ and $\boldsymbol\gamma=(\gamma_1,\ldots,\gamma_n)\in\mathbb{R}^n$, then we define a function $\rho^{\boldsymbol\gamma}$ on $M'$ as follows. On $S_i$ we set $\rho^{\boldsymbol\gamma}=\rho^{\gamma_i}$ for $i=1,\ldots,n$ and $\rho^{\boldsymbol\gamma}\equiv 1$ otherwise. Moreover, if $\boldsymbol\gamma,\boldsymbol\mu\in\mathbb{R}^n$, then we write $\boldsymbol\gamma\leq\boldsymbol\mu$ if $\gamma_i\leq\mu_i$ for $i=1,\ldots,n$, and $\boldsymbol\gamma<\boldsymbol\mu$ if $\gamma_i<\mu_i$ for $i=1,\ldots,n$. Finally, if $\boldsymbol\gamma\in\mathbb{R}^n$ and $a\in\mathbb{R}$, then we denote $\boldsymbol\gamma+a=(\gamma_1+a,\ldots,\gamma_n+a)\in\mathbb{R}^n$.

\subsection{Weighted Sobolev spaces}
In this and the following subsection we give a crash course in weighted Sobolev spaces and the Fredholm theory of the Laplace operator, or more generally of what we call operators of Laplace type, on Riemannian manifolds with conical singularities. For more details on the material presented here the reader should consult Joyce \cite{Joyce1}, Lockhart and McOwen \cite{LM}, the author \cite{Behrndt2}, and the references in these papers.

Throughout this subsection we denote by $(M,g)$ a compact $m$-dimensional Riemannian manifold with conical singularities as in Definition \ref{DefinitionConicalSingularities}. We first introduce weighted $C^k$-spaces. For $k\in\mathbb{N}$ we denote by $C^k_{\loc}(M)$ the space of $k$-times continuously differentiable functions $u:M'\rightarrow\mathbb{R}$ and we set $C^{\infty}(M')=\bigcap_{k\in\mathbb{N}}C^k_{\loc}(M')$, which is the space of smooth functions on $M'$. For $\boldsymbol\gamma\in\mathbb{R}^n$ we define the $C^k_{\boldsymbol\gamma}$-norm by
\[
\|u\|_{C^k_{\boldsymbol\gamma}}=\sum_{j=0}^k\sup_{x\in M'}|\rho(x)^{-\boldsymbol\gamma+j}\nabla^ju(x)|\quad\mbox{for }u\in C^k_{\loc}(M'),
\]
whenever it is finite. A different choice of radius function defines an equivalent norm. Note that $u\in C^k_{\loc}(M')$ has finite $C^k_{\boldsymbol\gamma}$-norm if and only if $\nabla^ju$ grows at most like $\rho^{\boldsymbol\gamma-j}$ for $j=0,\ldots,k$ as $\rho\rightarrow 0$. We define the weighted $C^k$-space $C^k_{\boldsymbol\gamma}(M')$ by
\[
C^k_{\boldsymbol\gamma}(M')=\bigl\{u\in C^k_{\loc}(M')\;:\;\|u\|_{C^k_{\boldsymbol\gamma}}<\infty\bigr\}.
\]
Then $C^k_{\boldsymbol\gamma}(M')$ is a Banach space. We also set $C^{\infty}_{\boldsymbol\gamma}(M')=\bigcap_{k\in\mathbb{N}}C^k_{\boldsymbol\gamma}(M')$. The space $C^{\infty}_{\boldsymbol\gamma}(M')$ is in general not a Banach space.

Next we define Sobolev spaces on $M'$. For a $k$-times weakly differentiable function $u:M'\rightarrow\mathbb{R}$ the $W^{k,p}$-norm is given by
\[
\|u\|_{W^{k,p}}=\left(\sum_{j=0}^k\int_{M'}|\nabla^ju|^p\;\mathrm dV_g\right)^{1/p},
\] 
whenever it is finite. Denote by $W^{k,p}_{\loc}(M')$ the space of $k$-times weakly differentiable functions on $M'$ that have locally a finite $W^{k,p}$-norm and define the Sobolev space $W^{k,p}(M')$ by
\[
W^{k,p}(M')=\left\{u\in W^{k,p}_{\loc}(M')\;:\;\|u\|_{W^{k,p}}<\infty\right\}.
\]
Then $W^{k,p}(M')$ is a Banach space. If $k=0$, then we write $L^p_{\loc}(M')$ and $L^p(M')$ instead of $W^{0,p}_{\loc}(M')$ and $W^{0,p}(M')$, respectively.

Finally we define weighted Sobolev spaces. For $k\in\mathbb{N}$, $p\in[1,\infty)$, and $\boldsymbol\gamma\in\mathbb{R}^n$ we define the $W^{k,p}_{\boldsymbol\gamma}$-norm by
\[
\|u\|_{W^{k,p}_{\boldsymbol\gamma}}=\left(\sum_{j=0}^k\int_{M'}|\rho^{-\boldsymbol\gamma+j}\nabla^ju|^p\rho^{-m}\;\mathrm dV_g\right)^{1/p}\quad\mbox{for }u\in W^{k,p}_{\loc}(M'),
\]
whenever it is finite. A different choice of radius function defines an equivalent norm. We define the weighted Sobolev space $W^{k,p}_{\boldsymbol\gamma}(M')$ by
\[
W^{k,p}_{\boldsymbol\gamma}(M')=\left\{u\in W^{k,p}_{\loc}(M')\;:\;\|u\|_{W^{k,p}_{\boldsymbol\gamma}}<\infty\right\}.
\]
Then $W^{k,p}_{\boldsymbol\gamma}(M')$ is a Banach space. If $k=0$, then we write $L^p_{\boldsymbol\gamma}(M')$ instead of $W^{0,p}_{\boldsymbol\gamma}(M')$. Note that $L^p(M')=L^p_{-m/p}(M')$ and that $C^{\infty}_{\cs}(M')$, the space of smooth functions on $M'$ with compact support, is dense in $W^{k,p}_{\boldsymbol\gamma}(M')$ for every $k\in\mathbb{N}$, $p\in[1,\infty)$, and $\boldsymbol\gamma\in\mathbb{R}^n$.

An important tool in the study of partial differential equations is the Sobolev Embedding Theorem, which gives embeddings of Sobolev spaces into different Sobolev spaces and $C^k$-spaces. The next theorem is a version of the Sobolev Embedding Theorem for weighted Sobolev spaces and weighted $C^k$-spaces.

\begin{Thm}\label{WeightedSobolevEmbedding}
Let $(M,g)$ be a compact $m$-dimensional Riemannian manifold with conical singularities as in Definition \ref{DefinitionConicalSingularities}. Let $k,l\in\mathbb{N}$, $p,q\in[1,\infty)$, and $\boldsymbol\gamma,\boldsymbol\delta\in\mathbb{R}^n$. Then the following hold.\vspace{0.17cm}
\begin{compactenum}
\item[{\rm(i)}] If $\frac{1}{p}\leq\frac{1}{q}+\frac{k-l}{m}$ and $\boldsymbol\gamma\geq\boldsymbol\delta$ then $W^{k,p}_{\boldsymbol\gamma}(M')$ embeds continuously into $W^{l,q}_{\boldsymbol\delta}(M')$ by inclusion.
\item[{\rm(ii)}] If $k-\frac{m}{p}>l$ and $\boldsymbol\gamma\geq\boldsymbol\delta$, then $W^{k,p}_{\boldsymbol\gamma}(M')$ embeds continuously into $C^{l}_{\boldsymbol\delta}(M')$ by inclusion.
\end{compactenum}
\end{Thm}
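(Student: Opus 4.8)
The plan is to localise the problem and reduce the weighted embedding to the classical (unweighted) Sobolev embedding on a fixed model region by a dyadic rescaling argument. Choose a partition of unity subordinate to the cover $\{M'\setminus\bigcup_{i}S_i\}\cup\{S_1\}\cup\cdots\cup\{S_n\}$. On the interior piece $M'\setminus\bigcup_i S_i$ the radius function is $\rho\equiv 1$, so all weighted norms coincide with the ordinary $W^{k,p}$, $W^{l,q}$ and $C^l$ norms; since this piece has compact closure (it is $M$ with small balls removed, hence a compact manifold with boundary), both (i) and (ii) follow immediately from the classical Sobolev Embedding Theorem. It therefore suffices to prove the estimates on a single conical end $S_i\cong\Sigma_i\times(0,R)$, and after rescaling $R$ we may take $R=1$.

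First I would decompose the end into dyadic annuli $A_J=\Sigma_i\times[2^{-J-1},2^{-J}]$ for $J\geq 0$ and rescale each to the fixed annulus $A=\Sigma_i\times[\tfrac12,1]$ via $r=2^{-J}s$. Under this dilation the exact cone metric $g_i=\mathrm dr^2+r^2h_i$ becomes $2^{-2J}$ times the cone metric in the $s$-variable; the hypothesis $|\nabla^k(\phi_i^*(g)-g_i)|=O(r^{\mu_i-2-k})$ with $\mu_i>2$ guarantees that the rescaled metrics converge in $C^\infty$ to the model cone metric on $A$, so all norm equivalences and the classical Sobolev constants on $A$ are \emph{uniform} in $J$. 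The key computation is that the combination $\rho^{-m}\,\mathrm dV_g$ in the weighted measure is scale-invariant, while $|\nabla^{j}u|_g$ scales by $2^{Jj}$; the weight shift $\rho^{-\boldsymbol\gamma+j}$ is designed precisely so that the factor $\rho^{+j}\approx 2^{-Jj}$ cancels this scaling, and consequently every term of the weighted norm over $A_J$ carries a single common power of the weight. Writing $u_J$ for the rescaled function, this yields
\[
\|u\|_{W^{k,p}_{\boldsymbol\gamma}(S_i)}^{p}\;\approx\;\sum_{J\geq 0}2^{Jp\gamma_i}\,\|u_J\|_{W^{k,p}(A)}^{p},
\]
and likewise with $(l,q,\delta_i)$ in place of $(k,p,\gamma_i)$.

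On the fixed annulus $A$ the classical Sobolev embeddings apply with uniform constants: $\|u_J\|_{W^{l,q}(A)}\leq C\|u_J\|_{W^{k,p}(A)}$ when $\tfrac1p\leq\tfrac1q+\tfrac{k-l}{m}$ (case (i), where necessarily $l\leq k$), and $\|u_J\|_{C^l(A)}\leq C\|u_J\|_{W^{k,p}(A)}$ when $k-\tfrac{m}{p}>l$ (case (ii)). For (ii) the reassembly is immediate: on $A_J$ one has $\sup_{A_J}|\rho^{-\boldsymbol\delta+l}\nabla^lu|\lesssim 2^{J\delta_i}\|u_J\|_{C^l(A)}\lesssim 2^{J(\delta_i-\gamma_i)}\|u\|_{W^{k,p}_{\boldsymbol\gamma}}$, and similarly for the lower-order terms; since $\delta_i\leq\gamma_i$ and $J\geq 0$ the prefactor is at most $1$, so taking the supremum over $J$ controls the $C^l_{\boldsymbol\delta}$-norm.

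The main obstacle is the reassembly in (i), where one must pass from the $\ell^p$-type sum in the source norm to the $\ell^q$-type sum in the target. Setting $\beta_J=2^{J\gamma_i}\|u_J\|_{W^{k,p}(A)}$, the source controls $\sum_J\beta_J^{p}$ while the target is bounded by $\sum_J 2^{Jq(\delta_i-\gamma_i)}\beta_J^{q}$. When $q\geq p$ the weight inequality $\gamma_i\geq\delta_i$ makes each factor $2^{Jq(\delta_i-\gamma_i)}\leq 1$ and the inclusion $\ell^p\hookrightarrow\ell^q$ completes the estimate; the regime $q<p$ instead requires Hölder's inequality, and here the interplay between the weight inequality and the summation exponents is delicate and must be tracked carefully. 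Reconciling these summations across all admissible $(p,q,k,l)$ while keeping the constants uniform in $J$ is the technical heart of the proof; once it is in place, summing the interior contribution and the finitely many end contributions yields the stated continuous embeddings.
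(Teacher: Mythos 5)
Your strategy --- localise away from the singular points, decompose each end into dyadic annuli $A_J=\Sigma_i\times[2^{-J-1},2^{-J}]$, rescale to a fixed annulus where the classical Sobolev embedding applies with constants uniform in $J$ (the uniformity coming from $\mu_i>2$ in Definition \ref{DefinitionConicalSingularities}), and reassemble --- is the standard proof of this result and is essentially how it is established in the references the paper cites (Lockhart--McOwen, Joyce); the paper itself offers no proof. Your scaling computation is correct: $\rho^{-m}\,\mathrm dV_g$ is scale-invariant, the shift $-\boldsymbol\gamma+j$ in the weight exactly cancels the factor $2^{Jj}$ produced by the $j$-th covariant derivative, and each block contributes $2^{Jp\gamma_i}\|u_J\|_{W^{k,p}(A)}^p$. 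Part (ii) and part (i) in the regime $p\leq q$ are complete as you describe.

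The gap sits exactly where you defer to a ``delicate'' Hölder argument, and it cannot be closed as stated. With $\beta_J=2^{J\gamma_i}\|u_J\|_{W^{k,p}(A)}$ and $q<p$, Hölder with exponents $p/q$ and $p/(p-q)$ gives
\[
\sum_{J}2^{Jq(\delta_i-\gamma_i)}\beta_J^{q}\leq\Bigl(\sum_J\beta_J^p\Bigr)^{q/p}\Bigl(\sum_J 2^{Jqp(\delta_i-\gamma_i)/(p-q)}\Bigr)^{(p-q)/p},
\]
and the geometric series on the right converges if and only if $\delta_i<\gamma_i$ \emph{strictly}. This is not a bookkeeping issue: for $q<p$ and $\gamma_i=\delta_i$ the embedding is false. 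Take $k=l=0$, $p=2$, $q=1$, $\boldsymbol\gamma=\boldsymbol\delta$, so the hypotheses of (i) hold, and let $u$ behave like $\rho^{\gamma_i}\bigl(\log(1/\rho)\bigr)^{-1}$ near $x_i$. Then $\|u\|_{L^2_{\boldsymbol\gamma}}^2\approx\int_0^R r^{-1}(\log(1/r))^{-2}\,\mathrm dr<\infty$ while $\|u\|_{L^1_{\boldsymbol\gamma}}\approx\int_0^R r^{-1}(\log(1/r))^{-1}\,\mathrm dr=\infty$; in your dyadic language $\beta_J\sim J^{-1}$ lies in $\ell^2$ but not $\ell^1$, reflecting the failure of $\ell^p\hookrightarrow\ell^q$ for $q<p$. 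The correct hypothesis (and the one in Lockhart--McOwen) is $\boldsymbol\gamma\geq\boldsymbol\delta$ when $p\leq q$ but $\boldsymbol\gamma>\boldsymbol\delta$ when $p>q$; with that amendment your Hölder step closes and the rest of your argument goes through. You should state this restriction rather than assert that the general case can be ``reconciled''.
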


Another important result for the study of partial differential equations is the Rellich--Kondrakov Theorem, which states under which condition the embeddings in the Sobolev Embedding Theorem are compact. The next theorem is a version of the Rellich--Kondrakov Theorem for weighted H\"older and Sobolev spaces on compact Riemannian manifolds with conical singularities.

\begin{Thm}\label{WeightedRellichKondrakov}
Let $(M,g)$ be a compact $m$-dimensional Riemannian manifold with conical singularities as in Definition \ref{DefinitionConicalSingularities}. Let $k,l\in\mathbb{N}$, $p,q\in[1,\infty)$, and let $\boldsymbol\gamma,\boldsymbol\delta\in\mathbb{R}^n$. Then the following hold.\vspace{0.17cm}
\begin{compactenum}
\item[{\rm(i)}] If $\frac{1}{p}<\frac{1}{q}+\frac{k-l}{m}$ and $\boldsymbol\gamma>\boldsymbol\delta$, then the inclusion of $W^{k,p}_{\boldsymbol\gamma}(M')$ into $W^{l,q}_{\boldsymbol\delta}(M')$ is compact.
\item[{\rm(ii)}] If $k-\frac{m}{p}>l$ and $\boldsymbol\gamma>\boldsymbol\delta$, then the inclusion of $W^{k,p}_{\boldsymbol\gamma}(M')$ into $C^{l}_{\boldsymbol\delta}(M')$ is compact.
\end{compactenum}
\end{Thm}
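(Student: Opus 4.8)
The plan is to combine the classical (unweighted) Rellich--Kondrakov theorem on a compact core of $M'$ with a decay estimate on the conical ends that exploits the strict weight gap $\boldsymbol\gamma>\boldsymbol\delta$. The two strict hypotheses play complementary roles: the strict Sobolev index inequality delivers compactness on the compact core, whereas the strict weight inequality $\gamma_i-\delta_i>0$ makes the contribution of the ends uniformly small, so that a single diagonal subsequence can be shown to be Cauchy in the target space.

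First I would record a continuous embedding with the \emph{same} weight on both sides. The hypotheses of (i), respectively (ii), imply the non-strict conditions of Theorem~\ref{WeightedSobolevEmbedding}(i), respectively (ii), applied with the weight $\boldsymbol\gamma$ throughout; hence $W^{k,p}_{\boldsymbol\gamma}(M')$ embeds continuously into $W^{l,q}_{\boldsymbol\gamma}(M')$ in case (i), and into $C^l_{\boldsymbol\gamma}(M')$ in case (ii). Thus any sequence $(u_j)$ bounded in $W^{k,p}_{\boldsymbol\gamma}(M')$, say by $1$, is also bounded by some constant $B$ in the target regularity measured with the weight $\boldsymbol\gamma$.

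Next, for $\epsilon\in(0,1)$ set $K_\epsilon=\{\rho\geq\epsilon\}$ and $E_\epsilon=\{\rho<\epsilon\}$; since $\rho\equiv1$ off $\bigcup_iS_i$, the set $E_\epsilon$ lies in the ends. On the compact manifold-with-boundary $K_\epsilon$ we have $\epsilon\leq\rho\leq1$, so all weighted norms are uniformly equivalent to the unweighted ones, and the ordinary Rellich--Kondrakov theorem---whose strict index hypothesis is exactly the one assumed---gives that the inclusions $W^{k,p}(K_\epsilon)\hookrightarrow W^{l,q}(K_\epsilon)$ and $W^{k,p}(K_\epsilon)\hookrightarrow C^l(K_\epsilon)$ are compact. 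On $E_\epsilon$, writing $\rho^{-\boldsymbol\delta+j}=\rho^{\boldsymbol\gamma-\boldsymbol\delta}\,\rho^{-\boldsymbol\gamma+j}$ and using $\rho^{\gamma_i-\delta_i}\leq\epsilon^{\kappa}$ there, where $\kappa=\min_i(\gamma_i-\delta_i)>0$, yields the tail bounds
\[
\|u\|_{W^{l,q}_{\boldsymbol\delta}(E_\epsilon)}\leq\epsilon^{\kappa}\,\|u\|_{W^{l,q}_{\boldsymbol\gamma}(M')}\qquad\text{and}\qquad\|u\|_{C^l_{\boldsymbol\delta}(E_\epsilon)}\leq\epsilon^{\kappa}\,\|u\|_{C^l_{\boldsymbol\gamma}(M')}.
\]

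Finally I would run a diagonal argument. For each $N$ the compactness on $K_{1/N}$ lets me pass to a subsequence converging in the target norm over $K_{1/N}$ (weighted and unweighted norms being equivalent there); diagonalising gives one subsequence $(u_{j_k})$ that is Cauchy in $W^{l,q}_{\boldsymbol\delta}(K_{1/N})$, respectively $C^l_{\boldsymbol\delta}(K_{1/N})$, for every $N$. Splitting the target norm of $u_{j_k}-u_{j_{k'}}$ over $K_\epsilon$ and $E_\epsilon$, the end part is at most $2B\epsilon^{\kappa}$ by the tail bound, while the core part tends to $0$ as $k,k'\to\infty$ for fixed $\epsilon$; choosing $\epsilon$ small and then $k,k'$ large shows $(u_{j_k})$ is Cauchy, hence convergent, in the target space. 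I expect the only delicate point to be the uniformity and bookkeeping in the tail estimate and the diagonal extraction; the geometry of Definition~\ref{DefinitionConicalSingularities} enters only to guarantee $\rho\approx r$ and controlled derivatives on the ends, which is what makes the norm equivalence on $K_\epsilon$ and the splitting routine.
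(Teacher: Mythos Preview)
The paper does not actually prove Theorem~\ref{WeightedRellichKondrakov}; it is stated without proof as part of the review in \S\ref{ANALYSIS}, with the reader referred to Joyce~\cite{Joyce1}, Lockhart--McOwen~\cite{LM}, and the author's companion paper~\cite{Behrndt2} for details. So there is no in-paper argument to compare against.

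Your proposal is correct and is exactly the standard argument one finds in those references: embed $W^{k,p}_{\boldsymbol\gamma}$ continuously into the target space with the \emph{same} weight $\boldsymbol\gamma$ via Theorem~\ref{WeightedSobolevEmbedding}, split $M'$ into a compact core $K_\epsilon=\{\rho\geq\epsilon\}$ and conical tails $E_\epsilon=\{\rho<\epsilon\}$, apply the classical Rellich--Kondrakov theorem on $K_\epsilon$ (where all weighted norms are mutually equivalent), and use the strict weight gap $\boldsymbol\gamma>\boldsymbol\delta$ to make the tail contribution uniformly $O(\epsilon^\kappa)$ before diagonalising. The only cosmetic point worth tightening is that $K_\epsilon$ is a smooth compact manifold with boundary only when $\epsilon$ is a regular value of $\rho$, which holds for almost every $\epsilon$ by Sard's theorem; since you are free to choose the exhaustion sequence, this causes no difficulty.
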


\subsection{Operators of Laplace type on compact Riemannian manifolds with conical singularities}\label{GGG}

Before we can discuss the Fredholm theory for the Laplace operator, or more general for operators of Laplace type, on compact Riemannian manifolds with conical singularities, we need to study homogeneous harmonic functions on Riemannian cones.

Let $(\Sigma,h)$ be a compact and connected $(m-1)$-dimensional Riemannian manifold, $m\geq 1$, and let $(C,g)$ be the Riemannian cone over $(\Sigma,h)$ as in Definition \ref{RiemannianCone}. A function $u:C'\rightarrow\mathbb{R}$ is said to be homogeneous of order $\alpha$, if there exists a function $\varphi:\Sigma\rightarrow\mathbb{R}$, such that $u(\sigma,r)=r^{\alpha}\varphi(\sigma)$ for $(\sigma,r)\in C'$. A straightforward computation shows that the Laplace operator on $C'$ is given by $\Delta_{g}u=\partial_r^2u+(m-1)r^{-1}\partial_ru+r^{-2}\Delta_hu$, and the following lemma is easily verified.

\begin{Lem}\label{HarmonicFunctions}
A homogeneous function $u(\sigma,r)=r^{\alpha}\varphi(\sigma)$ of order $\alpha\in\mathbb{R}$ on $C'$ with $\varphi\in C^{\infty}(\Sigma)$ is harmonic if and only if $\Delta_h\varphi=-\alpha(\alpha+m-2)\varphi$.
\end{Lem}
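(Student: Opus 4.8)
The plan is to prove Lemma \ref{HarmonicFunctions} by a direct substitution into the explicit formula for the cone Laplacian that was stated just above the lemma, namely $\Delta_g u = \partial_r^2 u + (m-1)r^{-1}\partial_r u + r^{-2}\Delta_h u$. Since the function in question has the separated form $u(\sigma,r) = r^\alpha \varphi(\sigma)$, the computation reduces to differentiating the radial factor $r^\alpha$ twice and applying $\Delta_h$ to the angular factor $\varphi$.

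First I would compute the three terms. The radial derivatives give $\partial_r u = \alpha r^{\alpha-1}\varphi$ and $\partial_r^2 u = \alpha(\alpha-1)r^{\alpha-2}\varphi$, so the first term contributes $\alpha(\alpha-1)r^{\alpha-2}\varphi$ and the second term contributes $(m-1)r^{-1}\cdot\alpha r^{\alpha-1}\varphi = (m-1)\alpha r^{\alpha-2}\varphi$. For the angular term, since $\Delta_h$ acts only in the $\Sigma$ directions and commutes with multiplication by the radial factor, we get $r^{-2}\Delta_h u = r^{\alpha-2}\Delta_h\varphi$. Adding these yields
\[
\Delta_g u = r^{\alpha-2}\bigl[\alpha(\alpha-1)\varphi + (m-1)\alpha\varphi + \Delta_h\varphi\bigr].
\]
Combining the two purely radial coefficients gives $\alpha(\alpha-1) + (m-1)\alpha = \alpha(\alpha + m - 2)$, so $\Delta_g u = r^{\alpha-2}\bigl[\alpha(\alpha+m-2)\varphi + \Delta_h\varphi\bigr]$.

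Finally I would read off the equivalence. The factor $r^{\alpha-2}$ is strictly positive on $C' = \Sigma\times(0,\infty)$, so $\Delta_g u = 0$ holds identically if and only if the bracketed expression vanishes identically on $\Sigma$, that is, if and only if $\Delta_h\varphi = -\alpha(\alpha+m-2)\varphi$. This is exactly the claimed eigenvalue equation, completing the proof.

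I do not anticipate a genuine obstacle here, since the lemma is a routine separation-of-variables calculation and the paper has already supplied the cone Laplacian formula; the only point requiring a word of care is the justification that $\Delta_h$ acts solely on the angular variable and hence passes through the $r^\alpha$ factor unchanged, which is immediate from the product structure of the cone metric $g = \mathrm dr^2 + r^2 h$ and the fact that $\varphi$ depends only on $\sigma\in\Sigma$.
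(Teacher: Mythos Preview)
Your proof is correct and is precisely the direct verification the paper has in mind; the paper itself omits the argument, merely remarking that the lemma ``is easily verified'' after recording the cone Laplacian formula $\Delta_g u=\partial_r^2u+(m-1)r^{-1}\partial_ru+r^{-2}\Delta_hu$.
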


Define
\begin{equation*}
\mathcal{D}_{\Sigma}=\{\alpha\in\mathbb{R}\;:\;-\alpha(\alpha+m-2)\mbox{ is an eigenvalue of }\Delta_h\}.
\end{equation*}
Then $\mathcal{D}_{\Sigma}$ is a discrete subset of $\mathbb{R}$ with no other accumulation points than $\pm\infty$. Moreover $\mathcal{D}_{\Sigma}\cap(2-m,0)=\emptyset$, since $\Delta_h$ is non-positive, and finally from Lemma \ref{HarmonicFunctions} it follows that $\mathcal{D}_{\Sigma}$ is the set of all $\alpha\in\mathbb{R}$ for which there exists a nonzero homogeneous harmonic function of order $\alpha$ on $C'$. Define a function
\[
m_{\Sigma}:\mathbb{R}\longrightarrow\mathbb{N},\quad m_{\Sigma}(\alpha)=\dim\ker(\Delta_h+\alpha(\alpha+m-2)).
\]
Then $m_\Sigma(\alpha)$ is the multiplicity of the eigenvalue $-\alpha(\alpha+m-2)$. Note that $m_{\Sigma}(\alpha)\neq 0$ if and only if $\alpha\notin\mathcal{D}_{\Sigma}$. Finally we define a function $M_{\Sigma}:\mathbb{R}\rightarrow\mathbb{Z}$ by
\begin{equation*}
M_{\Sigma}(\delta)=-\sum_{\alpha\in\mathcal{D}_{\Sigma}\cap(\delta,0)}m_{\Sigma}(\alpha)\;\mbox{if }\delta<0,\;M_{\Sigma}(\delta)=\sum_{\alpha\in\mathcal{D}_{\Sigma}\cap[0,\delta)}m_{\Sigma}(\alpha)\;\mbox{if }\delta\geq 0.
\end{equation*}
Then $M_{\Sigma}$ is a monotone increasing function that is discontinuous exactly on $\mathcal{D}_{\Sigma}$. As $\mathcal{D}_{\Sigma}\cap(2-m,0)=\emptyset$, we see that $M_{\Sigma}\equiv 0$ on $(2-m,0)$.
The set $\mathcal{D}_{\Sigma}$ and the function $M_{\Sigma}$ play an important r\^{o}le in the Fredholm theory for operators of Laplace type on compact Riemannian manifolds with conical singularities, see Theorem \ref{Fredholm} below. 

We now begin our review of the Fredholm theory for operators of Laplace type on compact Riemannian manifolds with conical singularities. From now on $(M,g)$ will denote a compact $m$-dimensional Riemannian manifold with conical singularities as in Definition \ref{DefinitionConicalSingularities}, and $\rho$ will be a radius function on $M'$. Operators of Laplace type are simply second order differential operators that are in leading order the Laplace operator. Before we give a precise definition of these operators let us have a closer look at what it means that a differential operator is the Laplace operator to leading order. For that consider the differential operator $D$ defined by $Du=\Delta_gu+g(X,\nabla u)+b\cdot u$, where $X$ is a vector field on $M'$ and $b, u$ are functions on $M'$. Let us assume that for some $\boldsymbol\delta\in\mathbb{R}^n$ we have $|\nabla^jX|=O(\rho^{\boldsymbol\delta-1-j})$ as $\rho\rightarrow 0$ for $j\in\mathbb{N}$ and $b\in C^{\infty}_{\boldsymbol\delta-2}(M')$. If $u\in C^{\infty}_{\boldsymbol\gamma}(M')$, then
\[
Du=\Delta_gu+g(X,\nabla u)+b\cdot u=O(\rho^{\boldsymbol\gamma-2})+O(\rho^{\boldsymbol\delta+\boldsymbol\gamma-2})
\]
Therefore, in general, the term $\Delta_gu$ dominates the lower order term $g(X,\nabla u)+b\cdot u$ near the singularity if and only if $\boldsymbol\delta>0$.

\begin{Def}\label{BERSERKER}
Let $D$ be a linear second order differential operator on $M'$. Then $D$ is said to be a differential operator of Laplace type if there exist $\boldsymbol\delta\in\mathbb{R}^n$ with $\boldsymbol\delta>0$, a vector field $X$ on $M'$ with $|\nabla^jX|=O(\rho^{\boldsymbol\delta-1-j})$ as $\rho\rightarrow 0$ for $j\in\mathbb{N}$, and a function $b\in C^{\infty}_{\boldsymbol\delta-2}(M')$, such that
\begin{equation}\label{LaplaceType}
Du=\Delta_gu+g(X,\nabla u)+b\cdot u\quad\mbox{for }u\in C^{\infty}(M').
\end{equation}
\end{Def}

Let $D$ be a differential operator of Laplace type as in \eq{LaplaceType} and define a first order differential operator $K$ by $Ku=g(X,\nabla u)+b\cdot u$. Then it easily follows from the Rellich--Kondrakov Theorem for weighted Sobolev spaces, Theorem \ref{WeightedRellichKondrakov}, that $K$ is a compact operator $W^{k,p}_{\boldsymbol\gamma}(M')\rightarrow W^{k-2,p}_{\boldsymbol\gamma-2}(M')$ for each $k\in\mathbb{N}$ with $k\geq 2$, $p\in(1,\infty)$, and $\boldsymbol\gamma\in\mathbb{R}^n$. Therefore operators of Laplace type, when mapping between weighted Sobolev spaces, differ from the Laplace operator only by a compact perturbation term. In particular it follows that the Laplace operator and operators of Laplace type essentially have the same Fredholm theory.

The next theorem is the main Fredholm theorem for operators of Laplace type on compact Riemannian manifolds with conical singularities, which can be easily deduced from the corresponding results for the Laplace operator discussed in \cite{Behrndt2} for instance. 

\begin{Thm}\label{Fredholm}
Let $(M,g)$ be a compact $m$-dimensional Riemannian manifold with conical singularities as in Definition \ref{DefinitionConicalSingularities}, $m\geq 3$, and $\boldsymbol\gamma\in\mathbb{R}^n$ and $D$ an operator of Laplace type. Let $k\in\mathbb{N}$ with $k\geq 2$ and $p\in(1,\infty)$. Then
\begin{equation}\label{p2}
D:W^{k,p}_{\boldsymbol\gamma}(M')\rightarrow W^{k-2,p}_{\boldsymbol\gamma-2}(M')
\end{equation}
is a Fredholm operator if and only if $\gamma_i\notin\mathcal{D}_{\Sigma_i}$ for $i=1,\ldots,n$. If $\gamma_i\notin\mathcal{D}_{\Sigma_i}$ for $i=1,\ldots,n$, then the Fredholm index of \eq{p2} is equal to $-\sum_{i=1}^nM_{\Sigma_i}(\gamma_i)$.
\end{Thm}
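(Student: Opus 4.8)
The plan is to reduce the statement to the case of the Laplace operator $\Delta_g$ and then invoke the Lockhart--McOwen Fredholm theory. Writing $D = \Delta_g + K$ with $Ku = g(X,\nabla u) + b\cdot u$ as in \eqref{LaplaceType}, the remark following Definition \ref{BERSERKER} already shows, via the Rellich--Kondrakov Theorem \ref{WeightedRellichKondrakov}, that $K : W^{k,p}_{\boldsymbol\gamma}(M') \to W^{k-2,p}_{\boldsymbol\gamma-2}(M')$ is compact for every $k \geq 2$ and $p \in (1,\infty)$; here the hypothesis $\boldsymbol\delta > 0$ in Definition \ref{BERSERKER} is exactly what places the perturbation in the correct weighted target. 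Since both the Fredholm property and the Fredholm index are invariant under compact perturbations, $D$ is Fredholm if and only if $\Delta_g$ is, and $\index(D) = \index(\Delta_g)$ whenever either is Fredholm. Thus it suffices to prove the theorem for $D = \Delta_g$.

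Both the criterion and the index are local at the cone points. On the compact piece $M' \setminus \bigcup_i S_i$ the operator $\Delta_g$ is uniformly elliptic and contributes only interior elliptic estimates, so the analysis reduces to a neighbourhood of each $x_i$. There condition (ii) of Definition \ref{DefinitionConicalSingularities} lets me regard $\Delta_g$ as an $O(r^{\mu_i - 2})$, hence again compact, perturbation of the exact cone Laplacian $\Delta_{g_i} = \partial_r^2 + (m-1)r^{-1}\partial_r + r^{-2}\Delta_{h_i}$. On the exact cone I would take the Mellin transform in $r$, which turns $\Delta_{g_i}$, acting between the weighted spaces determined by $\gamma_i$, into the \emph{indicial family} $\alpha \mapsto \Delta_{h_i} + \alpha(\alpha + m - 2)$ on the link $\Sigma_i$; by Lemma \ref{HarmonicFunctions} this family fails to be invertible exactly at $\alpha \in \mathcal{D}_{\Sigma_i}$. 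If $\gamma_i \notin \mathcal{D}_{\Sigma_i}$ for all $i$, one inverts the indicial families along the weight lines, patches the resulting cone parametrices to the interior parametrix, and obtains a two-sided parametrix for $\Delta_g$ modulo compact operators, so $\Delta_g$ is Fredholm. Conversely, if some $\gamma_i \in \mathcal{D}_{\Sigma_i}$, a Weyl sequence built from the homogeneous harmonic functions $r^{\alpha}\varphi$ of Lemma \ref{HarmonicFunctions} shows the range is not closed, so $\Delta_g$ is not Fredholm. This proves the Fredholm criterion.

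For the index I would use the \emph{relative index theorem}. The base case is the central chamber $\boldsymbol\gamma \in (2-m,0)^n$, where $M_{\Sigma_i} \equiv 0$ as recorded in \S\ref{GGG}. Weighted $L^2$-duality together with the formal self-adjointness of $\Delta_g$ identifies the cokernel at weight $\boldsymbol\gamma$ with the kernel of $\Delta_g$ at the reflected weight $(2-m) - \boldsymbol\gamma$; since $\mathcal{D}_{\Sigma_i}$ is symmetric about $(2-m)/2$, the self-dual weight $\gamma_i = 1 - m/2$ lies in the central chamber precisely when $m \geq 3$ (which is why this hypothesis is made), and there $\dim\ker = \dim\operatorname{coker}$, so $\index = 0$. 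As the index is locally constant on the complement of the critical weights, $\index \equiv 0 = -\sum_i M_{\Sigma_i}(\gamma_i)$ throughout the central chamber. To reach a general $\boldsymbol\gamma$ I track the index as each $\gamma_i$ crosses a critical value $\alpha \in \mathcal{D}_{\Sigma_i}$: recalling that $r^{\alpha}\varphi \in W^{k,p}_{\gamma_i}$ iff $\gamma_i < \alpha$, exactly the $m_{\Sigma_i}(\alpha) = \dim\ker(\Delta_{h_i} + \alpha(\alpha + m - 2))$ homogeneous harmonic solutions at exponent $\alpha$ leave the domain as $\gamma_i$ increases past $\alpha$, so the index drops by $m_{\Sigma_i}(\alpha)$ there. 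This is precisely the jump of $-M_{\Sigma_i}$, and summing the jumps from the central chamber out to $\boldsymbol\gamma$ gives $\index = -\sum_{i=1}^n M_{\Sigma_i}(\gamma_i)$.

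The main obstacle is the relative index step --- justifying rigorously that across a critical weight the kernel and cokernel change by exactly the span of the homogeneous harmonic functions and by nothing else. This rests on the asymptotic-expansion theory for solutions of $\Delta_g u = f$: such a $u$ in $W^{k,p}_{\gamma_i}$ admits, near $x_i$, an expansion in the exponents of $\mathcal{D}_{\Sigma_i}$, and relaxing the weight through the gap produces only the terms governed by the indicial roots in that gap. Because $\Delta_g$ is merely asymptotic to $\Delta_{g_i}$ to order $O(r^{\mu_i - 2})$ rather than exactly conical, one must also control the error terms in these expansions, which is the technically heaviest part; it is carried out in full in the author's \cite{Behrndt2}, from which the theorem follows.
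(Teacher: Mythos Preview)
Your proposal is correct and follows exactly the approach the paper indicates. The paper does not give a self-contained proof of this theorem; it merely records, in the paragraph preceding and following Definition \ref{BERSERKER}, that the lower-order part $K$ of $D$ is compact $W^{k,p}_{\boldsymbol\gamma}(M')\to W^{k-2,p}_{\boldsymbol\gamma-2}(M')$ by Theorem \ref{WeightedRellichKondrakov}, so that $D$ and $\Delta_g$ have the same Fredholm theory, and then defers the Laplace case to the references \cite{Behrndt2}, \cite{Joyce1}, and \cite{LM}. Your reduction to $\Delta_g$ via the compact perturbation is precisely this step, and your sketch of the Laplace case---Mellin transform/indicial family, parametrix patching for the Fredholm criterion, and the relative index argument anchored at the self-dual weight $\gamma_i=(2-m)/2$ in the central chamber---is exactly the Lockhart--McOwen machinery those references carry out, so there is nothing to add or correct.
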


For our later study of linear parabolic equations on compact Riemannian manifolds with conical singularities we need to introduce some more notation. Denote by $(\Sigma,h)$ as above a compact and connected $(m-1)$-dimensional Riemannian manifold, $m\geq 1$, and let $(C,g)$ the Riemannian cone over $(\Sigma,h)$. Then we define
\[
\mathcal{E}_{\Sigma}=\mathcal{D}_{\Sigma}\cup\left\{\beta\in\mathbb{R}\;:\;\beta=\alpha+2k\mbox{ for }\alpha\in\mathcal{D}_{\Sigma},\;k\in\mathbb{N}\mbox{ with }\alpha\geq 0\mbox{ and }k\geq 1\right\}
\]
and a function $n_{\Sigma}:\mathbb{R}\longrightarrow\mathbb{N}$ by
\[
n_{\Sigma}(\beta)=m_{\Sigma}(\beta)+\sum_{k\geq 1,\;2k\leq\beta}m_{\Sigma}(\beta-2k).
\]
Clearly if $\beta\notin\mathcal{E}_{\Sigma}$, then $n_{\Sigma}(\beta)=0$. Also note that if $\beta<2$, then $n_{\Sigma}(\beta)=m_{\Sigma}(\beta)$. Moreover, if $\beta\in\mathcal{E}_{\Sigma}$, then $n_{\Sigma}(\beta)$ counts the multiplicity of the eigenvalues 
\[
-\beta(\beta+m-2),-(\beta-2)((\beta-2)+m-2),\ldots,-(\beta-2k)((\beta-2k)+m-2)
\]
for $2k\leq\beta$. Finally we define a function $N_{\Sigma}:\mathbb{R}\longrightarrow\mathbb{N}$ by
\begin{equation}\label{DEFN}
N_{\Sigma}(\delta)=-\sum_{\beta\in\mathcal{D}_{\Sigma}\cap(\delta,0)}n_{\Sigma}(\beta)\;\mbox{if }\delta<0,\;N_{\Sigma}(\delta)=\sum_{\beta\in\mathcal{D}_{\Sigma}\cap[0,\delta)}n_{\Sigma}(\beta)\;\mbox{if }\delta\geq 0.
\end{equation}
Then $N_{\Sigma}(\delta)=M_{\Sigma}(\delta)$ for $\delta\leq 2$ and
\begin{equation}\label{indexxx}
M_{\Sigma}(\delta)=N_{\Sigma}(\delta)-N_{\Sigma}(\delta-2)\quad\mbox{for }\delta\in\mathbb{R}\mbox{ with }\delta>2.
\end{equation}
The set $\mathcal{E}_{\Sigma}$ and the function $N_{\Sigma}$ play a similar r\^{o}le in the study of the heat equation on compact Riemannian manifolds with conical singularities as $\mathcal{D}_{\Sigma}$ and $M_{\Sigma}$ do in the study of the Laplace operator, see Theorem \ref{SobolevRegularityHeat} below.

\subsection{Discrete asymptotics for operators of Laplace type}\label{DiscreteASYMPTOTICS}

In this subsection we will introduce the notion of discrete asymptotics. It turns out that discrete asymptotics, as we will later see, are the reason why the conical singularities in the generalized Lagrangian mean curvature flow move around in the ambient space. From an analytical point of view discrete asymptotics are important because they enter into the study of the inhomogeneous heat equation on compact Riemannian manifolds with conical singularities. In fact it turns out to be necessary to introduce weighted Sobolev spaces with discrete asymptotics in order to prove maximal regularity of solutions to the inhomogeneous heat equation, see \S\ref{PIIEO} below.

We begin with the construction of the model space for the discrete asymptotics. Let $(\Sigma,h)$ be a compact and connected $(m-1)$-dimensional Riemannian manifold, $m\geq 1$, and let $(C,g)$ be the Riemannian cone over $(\Sigma,h)$. For $\gamma\in\mathbb{R}$ we denote
\[
H_{\gamma}(C')=\Span\left\{u=r^{\alpha}\varphi\;:\;0\leq\alpha<\gamma,\;\varphi\in C^{\infty}(\Sigma),\;u\mbox{ is harmonic}\right\},
\]
which is the space of homogeneous harmonic functions of order $\alpha$ with $0\leq\alpha<\gamma$. Then $\dim H_{\gamma}(C')=M_{\Sigma}(\gamma)$ for $\gamma\geq 2-m$, so $H_{\gamma}(C')$ is at least one dimensional for $\gamma>0$. We define a finite dimensional vector space $V_{\mathsf{P}_{\gamma}}(C')$ by
\[
V_{\mathsf{P}_{\gamma}}(C')=\Span\left\{v=r^{2k}u\;:\;k\in\mathbb{N},\;u=r^{\alpha}\varphi\in H_{\gamma}(C')\mbox{ and }\alpha+2k<\gamma\right\}.
\]
Note that the Laplace operator on $C'$ maps $V_{\mathsf{P}_{\gamma}}(C')\rightarrow V_{\mathsf{P}_{\gamma-2}}(C')$ for every $\gamma\in\mathbb{R}$ and is a nilpotent map $V_{\mathsf{P}_{\gamma}}(C')\rightarrow V_{\mathsf{P}_{\gamma}}(C')$. Also note that $\dim V_{\mathsf{P}_{\gamma}}(C')=N_{\Sigma}(\gamma)$ for $\gamma\geq 2-m$ and that $V_{\mathsf{P}_{\gamma}}(C')=H_{\gamma}(C')$ for $\gamma\leq 2$. The space $V_{\mathsf{P}_{\gamma}}(C')$ serves as the model space in the definition of discrete asymptotics on general Riemannian manifolds with conical singularities. 

The definition of discrete asymptotics on compact Riemannian manifolds with conical singularities is based on the following proposition, which can be found in \cite[Prop. 6.14]{Behrndt1}.

\begin{Prop}\label{Asymptotics}
Let $(M,g)$ be a compact $m$-dimensional Riemannian manifold with conical singularities as in Definition \ref{DefinitionConicalSingularities}, $m\geq 3$, $D$ an operator of Laplace type, and $\boldsymbol\gamma\in\mathbb{R}^n$. Then for every $\varepsilon>0$ there exists a linear map
\begin{equation*}\label{MAP}
\Psi_{\boldsymbol\gamma}^D:\bigoplus_{i=1}^nV_{\mathsf{P}_{\gamma_i}}(C_i')\longrightarrow C^{\infty}(M'),
\end{equation*}
such that the following hold.\vspace{0.17cm}
\begin{compactenum}
\item[{\rm(i)}] For every $v\in\bigoplus_{i=1}^nV_{\mathsf{P}_{\gamma_i}}(C_i')$ with $v=(v_1,\ldots,v_n)$ and $v_i=r^{\beta_i}\varphi_i$ where $\varphi_i\in C^{\infty}(\Sigma_i)$ for $i=1,\ldots,n$ we have that
\[
|\nabla^k(\phi_i^*(\Psi_{\boldsymbol\gamma}^D(v))-v_i)|=O(r^{\mu_i-2-\varepsilon+\beta_i-k})\quad\mbox{as }r\longrightarrow 0\mbox{ for }k\in\mathbb{N}
\]
and $i=1,\ldots,n$.
\item[{\rm(ii)}] For every $v\in\bigoplus_{i=1}^nV_{\mathsf{P}_{\gamma_i}}(C_i')$ with $v=(v_1,\ldots,v_n)$ we have that
\[
D(\Psi^D_{\boldsymbol\gamma}(v))-\sum_{i=0}^n\Psi^D_{\boldsymbol\gamma}(\Delta_{g_i}v_i)\in C^{\infty}_{\cs}(M').
\]
\end{compactenum}
\end{Prop}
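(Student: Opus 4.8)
The plan is to build $\Psi^D_{\boldsymbol\gamma}$ from a global cut-off extension, corrected order by order against the cone model near each singular point. By linearity of the desired map it suffices to treat one singularity $x_i$ at a time and, inside the finite-dimensional space $V_{\mathsf{P}_{\gamma_i}}(C_i')$, a single generator $v=r^{\beta}\varphi$ with $\beta=\alpha+2k<\gamma_i$, $\alpha\in\mathcal{D}_{\Sigma_i}$ and $\varphi$ an order-$\alpha$ homogeneous harmonic function (all other input components set to zero). Since $\Delta_{g_i}$ lowers homogeneity by $2$ and is nilpotent on $V_{\mathsf{P}_{\gamma_i}}(C_i')$, I would run the whole construction by induction on $\beta$, so that $\Psi^D_{\boldsymbol\gamma}(\Delta_{g_i}v)$ is already available when $\Psi^D_{\boldsymbol\gamma}(v)$ is built. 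Fix a cut-off $\chi_i$ equal to $1$ near $x_i$ and supported in $S_i$, and set $w_0=\chi_i\,(\phi_i^{-1})^*v$. Where $\chi_i\equiv 1$ one has $\phi_i^*w_0=v$ exactly, so condition (i) holds for $w_0$ with zero error; the task is to modify $w_0$ only by terms of order strictly larger than $\beta$, so that (ii) holds while (i) survives. Writing $\phi_i^*D=\Delta_{g_i}+E$ near $x_i$, the metric asymptotics of Definition \ref{DefinitionConicalSingularities}(ii) together with the Laplace-type structure of $D$ show that $E$ is a second-order operator gaining a positive power, $E(r^{s}\psi)=O(r^{s-2+\kappa})$ with $\kappa=\min(\mu_i-2,\delta_i)>0$ (the $\mu_i-2$ coming from the metric error, the $\delta_i$ from the lower order terms of $D$). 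Using the inductive control of $\Psi^D_{\boldsymbol\gamma}(\Delta_{g_i}v)$, whose leading term matches $(\phi_i^{-1})^*(\Delta_{g_i}v)$ up to $O(r^{\beta-2+\mu_i-2-\varepsilon})$, the residual $f_0:=Dw_0-\Psi^D_{\boldsymbol\gamma}(\Delta_{g_i}v)$ splits into a compactly supported part (from the annulus where $\nabla\chi_i\neq 0$) plus a tail decaying like $O(r^{\beta-2+\kappa-\varepsilon})$ as $r\to 0$.

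Next I would remove the tail by correcting against the cone model. Expanding the tail into eigenfunctions of $\Delta_{h_i}$ and recalling from Lemma \ref{HarmonicFunctions} that $\Delta_{g_i}(r^{s}\psi)=\bigl(s(s+m-2)-\lambda\bigr)r^{s-2}\psi$ when $\Delta_{h_i}\psi=-\lambda\psi$, one can solve $\Delta_{g_i}(r^{s}\psi_s)=(\text{tail at order }s-2)$ for every exponent $s\notin\mathcal{D}_{\Sigma_i}$, since then the indicial factor is nonzero. Adding $\chi_i$ times these homogeneous corrections and iterating, each correction feeding a new, higher-order tail through $E$, yields a formal series whose truncation $w_N$ satisfies $Dw_N-\Psi^D_{\boldsymbol\gamma}(\Delta_{g_i}v)=O(r^{N})$ for arbitrarily large $N$. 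Every correction has order at least $\beta+\kappa$, hence is $O(r^{\beta+\mu_i-2-\varepsilon})$, so none disturbs condition (i). When an exponent $s$ happens to lie in $\mathcal{D}_{\Sigma_i}$ the indicial factor vanishes and one is forced to introduce a factor $\log r$; the loss $r^{s}\log r=O(r^{s-\varepsilon})$ is exactly what the $-\varepsilon$ in the statement absorbs, which is precisely why condition (i) is phrased with $\mu_i-2-\varepsilon$ rather than $\mu_i-2$.

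Finally I would upgrade the $O(r^{N})$ error (in the formal limit, $O(r^{\infty})$) to a compactly supported one. Given a smooth, rapidly decaying residual $e$, solve $\Delta_{g_i}\zeta=-e$ near $x_i$ mode by mode: for each eigenmode the equation $\zeta''+(m-1)r^{-1}\zeta'-\lambda r^{-2}\zeta=-e_s(r)$ has, because the forcing decays faster than any power, a rapidly decaying solution on $(0,R)$, and the weighted elliptic estimates of \S\ref{ANALYSIS} assemble these into a rapidly decaying $\zeta$ near $x_i$. Replacing $w_N$ by $w_N-\chi_i\zeta$ changes the residual by $-D(\chi_i\zeta)$, and the decomposition $D(\chi_i\zeta)=\chi_i D\zeta+[D,\chi_i]\zeta$ shows the new residual equals $(1-\chi_i)e-[D,\chi_i]\zeta$, both terms supported in the transition annulus $\{\nabla\chi_i\neq 0\}$, hence in $C^{\infty}_{\cs}(M')$. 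This gives (ii). Defining $\Psi^D_{\boldsymbol\gamma}$ on a basis this way and extending linearly produces the required map; smoothness on $M'$ follows by elliptic regularity, since all right-hand sides are smooth, and the disjointness of the $S_i$ keeps the cones from interfering.

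The main obstacle is this last step: condition (ii) demands a genuinely compactly supported error, not merely a fast-decaying one, and this cannot be extracted from a global solvability statement, because Theorem \ref{Fredholm} shows $D:W^{k,p}_{\boldsymbol\gamma}(M')\to W^{k-2,p}_{\boldsymbol\gamma-2}(M')$ is surjective only at very negative weights (where solutions are allowed to blow up at the singularities) and injective at very positive ones. The resolution is that the leftover only has to be killed near a single conical end, where, unlike on the two-ended cone or the global manifold, there is no cokernel obstruction, so the mode-by-mode ordinary-differential-equation inversion delivers a decaying local solution that can then be cut off. Carrying through the iteration the bookkeeping that every correction is of strictly higher order than the datum $v$ (so that (i) is preserved) and that logarithmic resonances stay below the $\varepsilon$-threshold is the remaining delicate point that makes the argument close.
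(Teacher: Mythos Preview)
The paper does not actually prove this proposition; it only cites \cite[Prop.~6.14]{Behrndt1}. Your overall architecture---transplant a homogeneous generator by a cut-off, compare $\phi_i^*D$ with the cone Laplacian $\Delta_{g_i}$, iteratively correct via the indicial operator, Borel-sum to an $O(r^\infty)$ remainder, then kill that remainder locally---is the standard route and is sound through the construction of the $w_N$ and the handling of resonances by the $\varepsilon$-loss.

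There is, however, a genuine gap in the last step. You solve $\Delta_{g_i}\zeta=-e$ and then claim the new residual is $(1-\chi_i)e-[D,\chi_i]\zeta$, supported in the transition annulus. But near $x_i$ one has $\phi_i^*D\zeta=\Delta_{g_i}\zeta+E\zeta$, not $\phi_i^*D\zeta=\Delta_{g_i}\zeta$; the term $\chi_i E\zeta$ survives in the residual and is \emph{not} compactly supported (it is $O(r^\infty)$, which is precisely what you started with, so nothing has been gained). Your cut-off computation is correct only if $D\zeta=e$ holds exactly near $x_i$. The repair is to invert the \emph{full} pulled-back operator $\phi_i^*D=\Delta_{g_i}+E$ on the truncated cone $\Sigma_i\times(0,R)$ with a boundary condition at $r=R$: since $E$ gains a positive power of $r$, for every sufficiently large weight $\phi_i^*D$ is a small perturbation of the invertible $\Delta_{g_i}$ on the corresponding weighted Sobolev spaces and is therefore itself invertible there; feeding in $e=O(r^\infty)$ then produces $\zeta=O(r^\infty)$ with $\phi_i^*D\zeta=e$, after which your formula $(1-\chi_i)e-[D,\chi_i]\zeta\in C^\infty_{\cs}(M')$ is valid verbatim. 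The mode-by-mode ODE picture you sketch is the right reason $\Delta_{g_i}$ is invertible on a one-ended truncated cone at high weight, but the actual solve in this step must use $\phi_i^*D$, not $\Delta_{g_i}$.
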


Using Proposition \ref{Asymptotics} we can now define weighted $C^k$-spaces and Sobolev spaces with discrete asymptotics on compact Riemannian manifolds with conical singularities as follows. If $(M,g)$ is a compact $m$-dimensional Riemannian manifold with conical singularities, $m\geq 3$, then for $k\in\mathbb{N}$, $p\in[1,\infty)$, and $\boldsymbol\gamma\in\mathbb{R}^n$ we define
\[
C^k_{\boldsymbol\gamma,\mathsf{P}^D_{\boldsymbol\gamma}}(M')=C^k_{\boldsymbol\gamma}(M')\oplus\im\;\Psi^D_{\boldsymbol\gamma}\quad\mbox{and}\quad W^{k,p}_{\boldsymbol\gamma,\mathsf{P}^D_{\boldsymbol\gamma}}(M')=W^{k,p}_{\boldsymbol\gamma}(M')\oplus\im\;\Psi^D_{\boldsymbol\gamma}.
\] 
Then $C^k_{\boldsymbol\gamma,\mathsf{P}_{\boldsymbol\gamma}}(M')$ and $W^{k,p}_{\boldsymbol\gamma,\mathsf{P}_{\boldsymbol\gamma}}(M')$ are both Banach spaces, where the norm on the discrete asymptotics part is some finite dimensional norm. Note that the discrete asymptotics are trivial if $\boldsymbol\gamma\leq 0$, so that in this case the weighted spaces with discrete asymptotics are simply weighted spaces.

Using Theorem \ref{Fredholm}, $\dim V_{\mathsf{P}_{\gamma}}(C')=N_{\Sigma}(\gamma)$, where $N_{\Sigma}$ is defined in \eq{DEFN}, and equation \eq{indexxx} one can now prove the following result.

\begin{Prop}\label{Fredholm2}
Let $(M,g)$ be a compact $m$-dimensional Riemannian manifold with conical singularities as in Definition \ref{DefinitionConicalSingularities}, $m\geq 3$, and $D$ an operator of Laplace type. Let $k\in\mathbb{N}$ with $k\geq 2$, $p\in(1,\infty)$, and $\boldsymbol\gamma\in\mathbb{R}^n$ with $\boldsymbol\gamma>2-m$ and $\gamma_i\notin\mathcal{E}_{\Sigma_i}$ for $i=1,\ldots,n$. Then
\begin{equation*}
D:W^{k,p}_{\boldsymbol\gamma,\mathsf{P}_{\boldsymbol\gamma}^D}(M')\rightarrow W^{k-2,p}_{\boldsymbol\gamma-2,\mathsf{P}_{\boldsymbol\gamma-2}^D}(M')
\end{equation*}
is a Fredholm operator with index zero. In particular, if $b\equiv 0$ and $\boldsymbol\gamma>0$, then 
\[
D:\left\{u\in W^{k,p}_{\boldsymbol\gamma,\mathsf{P}_{\boldsymbol\gamma}^D}(M')\;:\;\mbox{$\int_{M'}u=0$}\right\}\longrightarrow\left\{u\in W^{k-2,p}_{\boldsymbol\gamma-2,\mathsf{P}_{\boldsymbol\gamma-2}^D}(M')\;:\;\mbox{$\int_{M'}u=0$}\right\}
\]
is an isomorphism.
\end{Prop}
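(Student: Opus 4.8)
The plan is to reduce the assertion to the weighted Fredholm theorem, Theorem \ref{Fredholm}, together with a purely finite-dimensional index count on the discrete asymptotics, and then to upgrade the resulting index-zero statement to an isomorphism by a maximum principle. First I would decompose both sides according to their definitions, $W^{k,p}_{\boldsymbol\gamma,\mathsf{P}^D_{\boldsymbol\gamma}}(M')=W^{k,p}_{\boldsymbol\gamma}(M')\oplus\im\Psi^D_{\boldsymbol\gamma}$ and likewise with $\boldsymbol\gamma$ replaced by $\boldsymbol\gamma-2$, and show that $D$ is block upper-triangular with respect to these splittings. For $u\in W^{k,p}_{\boldsymbol\gamma}(M')$ the leading term $\Delta_g u$ lies in $W^{k-2,p}_{\boldsymbol\gamma-2}(M')$, and by Definition \ref{BERSERKER} the lower-order part $g(X,\nabla u)+b\cdot u$ lies there as well, so $D$ carries the weighted summand into the weighted summand and the weighted-to-asymptotics block vanishes. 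On the asymptotics summand, Proposition \ref{Asymptotics}(ii) gives $D(\Psi^D_{\boldsymbol\gamma}(v))-\Psi^D_{\boldsymbol\gamma-2}(\bigoplus_{i=1}^n\Delta_{g_i}v_i)\in C^{\infty}_{\cs}(M')\subset W^{k-2,p}_{\boldsymbol\gamma-2}(M')$, so the asymptotics-to-asymptotics block is the induced finite-dimensional map $v\mapsto\bigoplus_i\Delta_{g_i}v_i$ and the asymptotics-to-weighted block is finite rank. Here I would also use Proposition \ref{Asymptotics}(i) together with $\mu_i>2$ to see that $\Psi^D_{\boldsymbol\gamma}$ is injective and the sum is genuinely direct, and the hypothesis $\gamma_i\notin\mathcal{E}_{\Sigma_i}$ (which forces $\gamma_i\notin\mathcal{D}_{\Sigma_i}$) to guarantee that the spaces with discrete asymptotics are well defined on both sides.

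Next, since a bounded upper-triangular operator with Fredholm diagonal blocks is Fredholm with index the sum of the diagonal indices, I would combine two ingredients. By Theorem \ref{Fredholm} the weighted block $D:W^{k,p}_{\boldsymbol\gamma}(M')\to W^{k-2,p}_{\boldsymbol\gamma-2}(M')$ is Fredholm of index $-\sum_{i=1}^nM_{\Sigma_i}(\gamma_i)$, while the asymptotics block is a map of finite-dimensional spaces and so has index $\sum_{i=1}^n\bigl(\dim V_{\mathsf{P}_{\gamma_i}}(C_i')-\dim V_{\mathsf{P}_{\gamma_i-2}}(C_i')\bigr)$. It therefore remains to prove, for each $i$, the identity $M_{\Sigma_i}(\gamma_i)=\dim V_{\mathsf{P}_{\gamma_i}}(C_i')-\dim V_{\mathsf{P}_{\gamma_i-2}}(C_i')$, which yields total index zero.

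I expect this identity to be the main point, and the subtlety I would stress is that $\dim V_{\mathsf{P}_{\gamma_i-2}}(C_i')$ is \emph{not} in general $N_{\Sigma_i}(\gamma_i-2)$: the equality $\dim V_{\mathsf{P}_{\delta}}(C')=N_{\Sigma}(\delta)$ holds only for $\delta\geq 2-m$, whereas $\gamma_i-2$ may lie in $(-m,2-m)$. Using $\dim V_{\mathsf{P}_{\gamma_i}}(C_i')=N_{\Sigma_i}(\gamma_i)$ (valid as $\gamma_i>2-m$) I would argue by cases. If $\gamma_i>2$ then $\gamma_i-2>2-m$, so $\dim V_{\mathsf{P}_{\gamma_i-2}}=N_{\Sigma_i}(\gamma_i-2)$ and the identity is exactly \eq{indexxx}. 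If $2-m<\gamma_i\leq 2$ I would use $N_{\Sigma_i}(\gamma_i)=M_{\Sigma_i}(\gamma_i)$ and check that the subtracted term vanishes: for $\gamma_i\geq 4-m$ one has $\gamma_i-2\in[2-m,0]$, where $N_{\Sigma_i}\equiv 0$; for $\gamma_i<4-m$ one has $\gamma_i-2<0$, forcing $V_{\mathsf{P}_{\gamma_i-2}}(C_i')=\{0\}$ since there are no homogeneous harmonic functions of nonnegative order below $\gamma_i-2$. In every case the term vanishes and the identity follows (the discrepancy between $\dim V_{\mathsf{P}_{\gamma_i-2}}$ and $N_{\Sigma_i}(\gamma_i-2)$, caused by the constant exponent $2-m\in\mathcal{D}_{\Sigma_i}$, is precisely what the condition $\boldsymbol\gamma>2-m$ forces us to track).

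Finally, for the isomorphism I would specialize to $b\equiv 0$ and $\boldsymbol\gamma>0$. Since $\index D=0$ it suffices to identify kernel and image. Because $\boldsymbol\gamma>0$, Theorem \ref{WeightedSobolevEmbedding} and the admissible asymptotics (whose only nondecaying term is the constant of order $0$) show every $u$ in the domain is bounded and continuous up to the cone points; as $m\geq 3$ the cone points have zero capacity, so any solution of $Du=\Delta_g u+g(X,\nabla u)=0$ extends across them and the maximum principle gives $\ker D=\{\mathrm{constants}\}$, which is one-dimensional. Integration by parts, with the boundary fluxes over $\Sigma_i\times\{r\}$ of order $O(r^{\alpha+m-2})$, $\alpha\geq 0>2-m$, vanishing as $r\to 0$, yields $\int_{M'}Du\,\mathrm dV_g=0$, so $\im D\subseteq\{v:\int_{M'}v=0\}$; since $\index D=0$ and $\dim\ker D=1$ this inclusion is an equality. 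Restricting $D$ to the mean-zero hyperplanes removes exactly the constant kernel, so the restriction is injective, and surjectivity follows by subtracting off the mean of a preimage (possible since $M'$ has finite volume). Hence $D$ restricts to an isomorphism between the mean-zero subspaces, as claimed.
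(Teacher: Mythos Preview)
Your argument for the Fredholm--index--zero statement is correct and is exactly what the paper has in mind: it says, immediately before the proposition, that the result follows from Theorem~\ref{Fredholm}, the formula $\dim V_{\mathsf{P}_{\gamma}}(C')=N_{\Sigma}(\gamma)$, and equation~\eq{indexxx}. Your block--triangular decomposition via Proposition~\ref{Asymptotics}(ii) and the case analysis computing $\dim V_{\mathsf{P}_{\gamma_i}}-\dim V_{\mathsf{P}_{\gamma_i-2}}=M_{\Sigma_i}(\gamma_i)$ is the natural way to make that hint precise, including your careful handling of the range $2-m<\gamma_i\leq 2$ where $V_{\mathsf{P}_{\gamma_i-2}}=\{0\}$ but $N_{\Sigma_i}(\gamma_i-2)$ may be nonzero.

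There is, however, a genuine gap in the isomorphism step. Your claim that integration by parts gives $\int_{M'}Du\,\mathrm dV_g=0$ is correct for the Laplacian piece, but not for the first--order term: with $b\equiv 0$ one has
\[
\int_{M'}Du\,\mathrm dV_g=\int_{M'}\Delta_g u\,\mathrm dV_g+\int_{M'}g(X,\nabla u)\,\mathrm dV_g=0-\int_{M'}u\,\mathrm{div}(X)\,\mathrm dV_g,
\]
the boundary fluxes vanishing by the same $O(r^{\alpha+m-2})$ count you give (using $|X|=O(\rho^{\boldsymbol\delta-1})$ with $\boldsymbol\delta>0$). Thus $\int_{M'}Du$ is not zero in general, and $D$ need not carry the mean--zero hyperplane into the mean--zero hyperplane unless $\mathrm{div}(X)$ is constant, which the hypotheses of Definition~\ref{BERSERKER} do not force. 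Consequently the restriction of $D$ to mean--zero functions is not even well defined as a map into the stated target, and your identification $\im D=\{v:\int v=0\}$ fails. The paper gives no argument for this second assertion, so there is nothing to compare against; but as written your proof would need an additional hypothesis (e.g.\ $X\equiv 0$, so $D=\Delta_g$, in which case your argument goes through), or else a different characterisation of $\im D$ via $\ker D^*$ rather than the unweighted mean--zero condition. Your maximum--principle identification of $\ker D$ with the constants is essentially sound, though the ``capacity zero'' removable--singularity step deserves a word of care since the cone points are metric singularities, not smooth points; it is cleaner to argue directly from the asymptotic form of $u$ near each $x_i$.
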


\subsection{Weighted parabolic Sobolev spaces and linear parabolic equations of Laplace type}\label{PIIEO}

We now consider the following Cauchy problem
\begin{align}\label{HeatEquation}
\begin{split}
&\partial_tu(t,x)=Du(t,x)+f(t,x)\quad\mbox{for }(t,x)\in(0,T)\times M',\\
&u(0,x)=0\quad\quad\quad\quad\quad\quad\quad\quad\;\;\;\mbox{for } x\in M', 
\end{split}
\end{align}
where $f:(0,T)\times M'\rightarrow\mathbb{R}$ is a given function, $(M,g)$ is a compact Riemannian manifold with conical singularities, $T>0$, and $D$ is an operator of Laplace type. 

In order to state the main result about the existence and regularity of solutions to \eq{HeatEquation} correctly we first need to introduce weighted parabolic Sobolev spaces with discrete asymptotics.

We begin with the definition of Sobolev spaces of maps $u:I\rightarrow X$, where $I\subset\mathbb{R}$ is an open and bounded interval and $X$ is a Banach space. Let $k\in\mathbb{N}$ and $p\in[1,\infty)$. For a $k$-times weakly differentiable map $u:I\rightarrow X$ we define the $W^{k,p}$-norm by 
\[
\|u\|_{W^{k,p}}=\left(\sum_{j=0}^k\int_I\|\partial_t^ju(t)\|_X^p\;\mathrm dt\right)^{1/p},
\]
whenever it is finite. We denote by $W^{k,p}_{\loc}(I;X)$ the space of $k$-times weakly differentiable maps $u:I\rightarrow X$ with locally finite $W^{k,p}$-norm, and we define
\[
W^{k,p}(I;X)=\left\{u\in W^{k,p}_{\loc}(I;X)\;:\;\|u\|_{W^{k,p}}<\infty\right\}.
\] 
Then $W^{k,p}(I;X)$ is a Banach space. If $k=0$, then we write $L^p_{\loc}(I;X)$ and $L^p(I;X)$ instead of $W^{0,p}_{\loc}(I;X)$ and $W^{0,p}(I;X)$, respectively.

We can now define weighted parabolic Sobolev spaces as follows. Let $k,l\in\mathbb{N}$ with $2k\leq l$, $p\in[1,\infty)$, and $\boldsymbol\gamma\in\mathbb{R}^n$. The weighted parabolic Sobolev space $W^{k,l,p}_{\boldsymbol\gamma}(I\times M')$ is given by
\[
W^{k,l,p}_{\boldsymbol\gamma}(I\times M')=\bigcap_{j=0}^kW^{j,p}(I;W^{l-2j,p}_{\boldsymbol\gamma-2j}(M')).
\]
Then $W^{k,l,p}_{\boldsymbol\gamma}(I\times M')$ is a Banach space. Moreover, if $m\geq 3$, then we define the weighted parabolic Sobolev space $W^{k,l,p}_{\boldsymbol\gamma,\mathsf{P}^D_{\boldsymbol\gamma}}(I\times M')$ with discrete asymptotics by 
\[
W^{k,l,p}_{\boldsymbol\gamma,\mathsf{P}^D_{\boldsymbol\gamma}}(I\times M')=\bigcap_{j=0}^kW^{j,p}(I;W^{l-2j,p}_{\boldsymbol\gamma-2j,\mathsf{P}^D_{\boldsymbol\gamma-2j}}(M')).
\]
Clearly $W^{k,l,p}_{\boldsymbol\gamma,\mathsf{P}^D_{\boldsymbol\gamma}}(I\times M')$ is a Banach space.

In order to understand these rather complicated looking spaces let us consider the special case where $k=1$, $l=2$, and $\boldsymbol\gamma>2$ and let us see how a function $u\in W^{1,2,p}_{\boldsymbol\gamma,\mathsf{P}_{\boldsymbol\gamma}^D}(I\times M')$ behaves under the action of the heat operator. Loosely speaking the function $u$ is of the following form
\[
u(t,\cdot)=O(\rho^{\boldsymbol\gamma})+\mbox{discrete asymptotics of rate}<\boldsymbol\gamma
\]
for each $t\in I$. When we apply the operator $D$ to the function $u$ then, from the definition of the space $ W^{1,2,p}_{\boldsymbol\gamma,\mathsf{P}_{\boldsymbol\gamma}^D}(I\times M')$, it follows that $Du\in W^{0,0,p}_{\boldsymbol\gamma-2,\mathsf{P}_{\boldsymbol\gamma-2}^D}(I\times M')$. So by differentiating $u$ in the spatial direction using $L$ we lose two spatial derivatives, and therefore two rates of decay, and one time derivative. For parabolic Sobolev spaces it is natural that two spatial derivatives compare to one time derivative, so when we take two spatial derivatives we also lose one time derivative. This, first of all, explains why $Du\in W^{0,0,p}_{\boldsymbol\gamma-2,\mathsf{P}_{\boldsymbol\gamma-2}^D}(I\times M')$. Now, when studying the Cauchy problem \eq{HeatEquation}, $Du$ should have the same regularity as $\partial_tu$, and, as we can see from the definition of $W^{1,2,p}_{\boldsymbol\gamma,\mathsf{P}_{\boldsymbol\gamma}^D}(I\times M')$, we have in fact that $\partial_tu\in W^{0,0,p}_{\boldsymbol\gamma-2,\mathsf{P}_{\boldsymbol\gamma-2}^D}(I\times M')$. Thus when we take one time derivative of $u$, then we lose two spatial derivatives (as usual for parabolic equations), and therefore we also lose two rates of decay. Therefore, loosely speaking, we have that
\[
\partial_tu(t,\cdot),Du(t,\cdot)=O(\rho^{\boldsymbol\gamma-2})+\mbox{discrete asymptotics of rate}<\boldsymbol\gamma-2
\]
for $t\in I$. In particular we expect that if the function $f$ that we are given in the Cauchy problem \eq{HeatEquation} lies in $W^{0,0,p}_{\boldsymbol\gamma-2,\mathsf{P}_{\boldsymbol\gamma-2}^D}(I\times M')$, then the solution $u$ to the Cauchy problem \eq{HeatEquation}, if it exists, should lie in $W^{1,2,p}_{\boldsymbol\gamma,\mathsf{P}_{\boldsymbol\gamma}^D}(I\times M')$.

The following theorem is the main result about the existence and regularity of solutions to \eq{HeatEquation}. 

\begin{Thm}\label{SobolevRegularityHeat}
Let $(M,g)$ be a compact $m$-dimensional Riemannian manifold with conical singularities as in Definition \ref{DefinitionConicalSingularities}, $m\geq 3$. Let $T>0$, $k\in\mathbb{N}$ with $k\geq 2$, $p\in(1,\infty)$, and $\boldsymbol\gamma\in\mathbb{R}^n$ with $\boldsymbol\gamma>2-m$ and $\gamma_i\notin\mathcal{E}_{\Sigma_i}$ for $i=1,\ldots,n$. Given $f\in W^{0,k-2,p}_{\boldsymbol\gamma-2,\mathsf{P}^D_{\boldsymbol\gamma-2}}((0,T)\times M')$, then there exists a unique $u\in W^{1,k,p}_{\boldsymbol\gamma,\mathsf{P}^D_{\boldsymbol\gamma}}((0,T)\times M')$ solving the Cauchy problem \eq{HeatEquation}.
\end{Thm}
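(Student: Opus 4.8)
The plan is to deduce the theorem from the elliptic Fredholm theory of Proposition \ref{Fredholm2} by recognising the data and solution spaces as the natural maximal-regularity pair for an abstract parabolic equation. Set $Y=W^{k-2,p}_{\boldsymbol\gamma-2,\mathsf{P}^D_{\boldsymbol\gamma-2}}(M')$ and regard $D$ as an unbounded operator on $Y$ with domain $\mathcal D(D)=W^{k,p}_{\boldsymbol\gamma,\mathsf{P}^D_{\boldsymbol\gamma}}(M')$; by Proposition \ref{Fredholm2} (which is where $m\geq 3$, $\boldsymbol\gamma>2-m$ and $\gamma_i\notin\mathcal E_{\Sigma_i}$ enter) $D$ maps $\mathcal D(D)$ boundedly into $Y$, so that $D$ is a closed, densely defined operator on $Y$. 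Unwinding the definition of the weighted parabolic spaces, the forcing space is exactly $L^p((0,T);Y)$ and the asserted solution space is exactly $W^{1,p}((0,T);Y)\cap L^p((0,T);\mathcal D(D))$. Thus the theorem is \emph{equivalent} to the statement that $D$ has maximal $L^p$-regularity on $Y$, i.e.\ that $\partial_t-D$ is an isomorphism from this intersection, restricted to the zero initial trace, onto $L^p((0,T);Y)$; the vanishing initial condition removes any trace compatibility issue.

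By the standard theory of maximal regularity for $\mathcal R$-sectorial operators (Weis's theorem), this follows once $D$ is shown to be $\mathcal R$-sectorial of angle $<\pi/2$ on $Y$, and a convenient sufficient condition is a bounded $H^\infty$-calculus; in any case the heart of the matter is the family of resolvent estimates $\|(\lambda-D)^{-1}\|_{\mathcal L(Y)}=O(|\lambda|^{-1})$ for $\lambda$ in a sector $|\arg\lambda|<\pi-\varepsilon$. I would first treat $D=\Delta_g$, writing $D=\Delta_g+K$ and absorbing the lower-order term $K$, which is compact and hence $\Delta_g$-bounded with relative bound zero, as a perturbation that preserves both sectoriality and maximal regularity on short time intervals. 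On each cone end $S_i\cong\Sigma_i\times(0,R)$ the estimate is obtained by treating $\lambda-\Delta_{g_i}$ as an operator elliptic with parameter: separating variables in the eigenfunctions of $\Delta_{h_i}$ reduces it to a family of modified-Bessel resolvent problems in $r$, and the condition $\gamma_i\notin\mathcal E_{\Sigma_i}$ is precisely the absence of resonances on the weight line that makes these problems uniformly invertible in $\lambda$. The asymptotic bookkeeping is delicate: inverting $\lambda-\Delta_{g_i}$ turns a boundary term $r^{\alpha}\varphi$ into a series of descendants $r^{\alpha+2k}\varphi$, which is exactly why the model space $V_{\mathsf{P}_{\gamma}}(C')$ and the set $\mathcal E_\Sigma$ of \S\ref{GGG} are the correct carriers of the discrete asymptotics and why the resolvent preserves the spaces with asymptotics.

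With the model estimates in hand I would assemble a global parametrix using a partition of unity subordinate to the cover of $M'$ by the cone ends $S_1,\dots,S_n$ and the compact interior region, on which standard parabolic theory (Aubin \cite{Aubin}) applies. Patching the cone resolvents with the interior resolvent produces an approximate inverse $Q(\lambda)$ satisfying $(\lambda-D)Q(\lambda)=I+E(\lambda)$, where $E(\lambda)$ arises only from cutoff commutators and is therefore of lower order and decaying in $\lambda$. A Neumann series inverts $I+E(\lambda)$ for $|\lambda|$ large, yielding the sectorial resolvent bound on $Y$, and $\mathcal R$-boundedness is obtained in the same way since each building block is $\mathcal R$-bounded. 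Maximal regularity then produces the solution $u$ in the asserted space. Uniqueness is immediate: the difference of two solutions solves the homogeneous equation with zero initial data, and since $D$ generates an analytic semigroup on $Y$ this difference is the mild solution $0$ (equivalently, a weighted energy estimate together with Gronwall's inequality forces it to vanish).

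The principal obstacle lies entirely in the second paragraph: establishing the parameter-dependent, $\mathcal R$-bounded resolvent estimates on the weighted cone Sobolev spaces \emph{with} discrete asymptotics, and verifying that $(\lambda-D)^{-1}$ maps $\im\,\Psi^D_{\boldsymbol\gamma-2}$ back into $\im\,\Psi^D_{\boldsymbol\gamma}$ with exactly the descendant structure encoded by $n_\Sigma$ and $N_\Sigma$. This is the analytic core where the condition $\gamma_i\notin\mathcal E_{\Sigma_i}$ is indispensable and which cannot be obtained from the elliptic Fredholm theorem of Theorem \ref{Fredholm} as a black box; the remaining steps are bookkeeping and soft functional analysis. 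The full details of this cone analysis are carried out in \cite{Behrndt2} and \cite{Behrndt1}.
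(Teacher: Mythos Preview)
Your proposal is a genuinely different route from the paper's. The paper does not pass through abstract maximal regularity or resolvent estimates at all: instead it constructs the fundamental solution $H$ directly, relying on Mooers' heat-kernel asymptotics on manifolds with conic singularities \cite{Mooers} for the case $D=\Delta_g$, then corrects by a lower-order remainder to handle general operators of Laplace type. The solution of the Cauchy problem is then written explicitly as the convolution $u(t,x)=\int_0^t\!\int_{M'}H(t-s,x,y)f(s,y)\,\mathrm dV_g(y)\,\mathrm ds$, and the weighted regularity with discrete asymptotics is read off from pointwise estimates on $H$ that are a by-product of the construction. Your approach instead packages the same information into parameter-dependent resolvent bounds and invokes Weis's characterisation of maximal $L^p$-regularity. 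What your approach buys is a clean functional-analytic framework in which the solution space $W^{1,p}((0,T);Y)\cap L^p((0,T);\mathcal D(D))$ is identified a priori and uniqueness falls out of semigroup theory; what the paper's approach buys is concreteness and independence from the (still substantial) machinery of $\mathcal R$-boundedness on cone algebras, at the cost of having to prove the convolution estimates by hand. Both routes ultimately hinge on the same hard analytic input, namely the precise asymptotic behaviour near the tip governed by $\mathcal E_{\Sigma_i}$, but they encode it differently: in heat-kernel asymptotics on the one hand, in resolvent asymptotics on the other. Your honest flagging of the second paragraph as the analytic core is accurate; note only that establishing $\mathcal R$-sectoriality of cone Laplacians on weighted spaces \emph{with} discrete asymptotics is itself a theorem of comparable depth to the heat-kernel construction, so the abstraction does not really shorten the argument.
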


\noindent The proof of Theorem \ref{SobolevRegularityHeat} can be found in \cite[Thm. 4.8]{Behrndt2} for the case $D=\Delta_g$. 

We shortly explain why Theorem \ref{SobolevRegularityHeat} continues to hold for general operators of Laplace type. The proof of Theorem \ref{SobolevRegularityHeat} consists of three steps. The first step is to construct a fundamental solution, i.e. a function $H\in C^{\infty}((0,\infty)\times M'\times M')$ that solves the Cauchy problem
\begin{align}\label{EEEQW1}
\begin{split}
&\frac{\partial H}{\partial t}(t,x,y)=D_xH(t,x,y)\quad\mbox{for }(t,x,y)\in(0,T)\times M'\times M',\\
&H(0,x,y)=\delta_x(y)\;\quad\;\;\quad\quad\;\;\;\;\mbox{for }x\in M'.
\end{split}
\end{align}
This was done by Mooers in \cite{Mooers} for the case $D=\Delta_g$. Let $H_0$ be the solution to \eq{EEEQW1} in the case where $D=\Delta_g$, i.e. $H_0$ is the heat kernel. Let $D$ be a differential operator of Laplace type. Since $\Delta_g$ is the leading order term of $D$, $H_0$ is already a good approximation for a solution of \eq{EEEQW1}. Therefore $H_0$ satisfies
\begin{align*}
&\frac{\partial H_0}{\partial t}(t,x,y)=D_xH_0(t,x,y)+R_0(t,x,y)\quad\mbox{for }(t,x,y)\in(0,T)\times M'\times M',\\
&H_0(0,x,y)=\delta_x(y)\;\quad\;\;\quad\quad\quad\quad\quad\quad\quad\quad\;\;\;\mbox{for }x\in M',
\end{align*}
where the error term $R_0\in C^{\infty}((0,\infty)\times M'\times M')$ is in some sense a lower order term. Now one can follow the construction given by Mooers and construct a function $H_1\in C^{\infty}((0,\infty)\times M'\times M')$ which solves away the error term $R_0$. Then by defining $H=H_0+H_1$, one obtains a solution to \eq{EEEQW1} and the leading order term of $H$ is $H_0$. The second step in the proof of Theorem \ref{SobolevRegularityHeat} is to show that the fundamental solution $H$ satisfies certain estimates. In fact, these estimates are an immediate consequence of the way the fundamental solution is constructed. The third step is now completely analogous to the special case of the heat equation. Once the correct estimates for the fundamental solution are known, one can write down the solution of the Cauchy problem \eq{HeatEquation} explicitly as a convolution integral of $H$ with $f$ and then study the regularity of this convolution integral as in \cite[Thm. 4.8]{Behrndt2}.

\section{Lagrangian submanifolds with isolated conical singularities}\label{SUPERMANNOOO}
\subsection{Special Lagrangian cones}\label{SLCONESQQQ}
In this subsection we define special Lagrangian cones in $\mathbb{C}^m$ and introduce the notion of stable special Lagrangian cones. More about special Lagrangian cones can be found in Joyce \cite[\S 8]{JoyceBook} and in Ohnita \cite{Ohnita}.

We begin with the definition of special Lagrangian cones in $\mathbb{C}^m$.

\begin{Def}\label{DefSLcone}
Let $\iota_{\Sigma}:\Sigma\rightarrow\mathcal{S}^{2m-1}$ be a compact and connected $(m-1)$-dimensional submanifold of the $(2m-1)$-dimensional unit sphere $\mathcal{S}^{2m-1}$ in $\mathbb{R}^{2m}$. We identify $\Sigma$ with its image $\iota_{\Sigma}(\Sigma)\subset \mathcal{S}^{2m-1}$. Define $\iota:\Sigma\times[0,\infty)\rightarrow\mathbb{C}^m$ by $\iota(\sigma,r)=r\sigma$. Denote $C=(\Sigma\times(0,\infty))\sqcup\{0\}$, $C'=\Sigma\times(0,\infty)$ and identify $C$ and $C'$ with their images $\iota(C)$ and $\iota(C')$ under $\iota$ in $\mathbb{C}^m$. Then $C$ is a special Lagrangian cone with phase $e^{i\theta}$, if $\iota$ restricted to $\Sigma\times(0,\infty)$ is a special Lagrangian submanifold of $\mathbb{C}^m$ with phase $e^{i\theta}$ in the sense of Definition \ref{DefinitionSpecialLagrangian}.
\end{Def}

Let $C$ be a special Lagrangian cone in $\mathbb{C}^m$. In \S\ref{DiscreteASYMPTOTICS} we discussed homogeneous harmonic functions on Riemannian cones. On a special Lagrangian cone there is a special class of homogeneous harmonic functions, namely those induced by the moment maps of the automorphism group of $(\mathbb{C}^m,J',\omega',\Omega')$. The automorphism group of $(\mathbb{C}^m,\omega',g')$ is the Lie group $U(m)\ltimes\mathbb{C}^m$, where $\mathbb{C}^m$ acts by translations, and the automorphism group of $(\mathbb{C}^m,\omega',g',\Omega')$ is the Lie group $SU(m)\ltimes\mathbb{C}^m$. The Lie algebra $\mathfrak{u}(m)$ of $U(m)$ is the space of skew-adjoint complex linear transformations, and the Lie algebra $\mathfrak{su}(m)$ of $SU(m)$ is the space of the trace-free, skew-adjoint complex linear transformations. Note in particular that $\mathfrak{u}(m)=\mathfrak{su}(m)\oplus\mathfrak{u}(1)$.

Let $X=(A,v)\in\mathfrak{u}(m)\oplus\mathbb{C}^m$, with $A=(a_{ij})_{i,j=1,\ldots,m}$ and $v=(v_i)_{i=1,\ldots,m}$. Then $X$ acts as a vector field on $\mathbb{C}^m$. Since $U(m)\ltimes\mathbb{C}^m$ preserves $\omega'$, $X\;\lrcorner\;\omega'$ is a closed one-form on $\mathbb{C}^m$ and there exists a unique function $\mu_X:\mathbb{C}^m\rightarrow\mathbb{R}$, such that $\mathrm d\mu_X=X\;\lrcorner\;\omega'$ and $\mu_X(0)=0$. Indeed, if $X=(A,v)\in\mathfrak{u}(m)\oplus\mathbb{C}^m$, then $\mu_X$ is given by
\begin{equation*}
\mu_X=\frac{i}{2}\sum_{i,j=1}^ma_{ij}z_i\bar{z}_j+\frac{i}{2}\sum_{i=1}^m(v_i\bar{z}_i-\bar{v}_iz_i).
\end{equation*}
Moreover, since $a_{ij}=-\bar{a}_{ji}$ for $i,j=1,\ldots,n$, we see that $\mu_X$ is a real quadratic polynomial. We call $\mu_X$ a moment map for $X$. For $X=(A,v,c)\in\mathfrak{u}(m)\oplus\mathbb{C}^m\oplus\mathbb{R}$ we define $\mu_X:\mathbb{C}^m\rightarrow\mathbb{R}$ by requiring that 
\begin{equation*}
\mathrm d\mu_X=X\;\lrcorner\;\omega'\quad\mbox{and}\quad\mu_X(0)=c.
\end{equation*} 

A proof of the following proposition is given in Joyce \cite[Prop. 3.5]{Joyce2}.

\begin{Prop}\label{SLConeHarmonics}
Let $C$ be a special Lagrangian cone in $\mathbb{C}^m$ as in Definition \ref{DefSLcone} and let $G$ be the maximal Lie subgroup of $SU(m)$ that preserves $C$. Then the following hold.\vspace{0.17cm}
\begin{compactenum}
\item[{\rm(i)}] Let $X\in\mathfrak{su}(m)$. Then $\iota^*(\mu_X)$ is a homogeneous harmonic function of order two on $C'$. Consequently the space of homogeneous harmonic functions of order two on $C'$ is at least of dimension $m^2-1-\dim G$.
\item[{\rm(ii)}] Let $X\in\mathbb{C}^m$. Then $\iota^*(\mu_X)$ is a homogeneous harmonic function of order one on $C'$. Consequently the space of homogeneous harmonic functions of order one on $C'$ is at least of dimension $2m$.
\end{compactenum}
\end{Prop}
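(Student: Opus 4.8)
The plan is to restrict each moment map to the cone via $\iota$ and verify the two defining properties of a homogeneous harmonic function---the correct order of homogeneity and harmonicity for the cone metric $g$---and then to read off the dimension estimates from the kernel of the linear map $X\mapsto\iota^*(\mu_X)$. The homogeneity is immediate from the explicit formulas: for $X\in\mathfrak{su}(m)$ the moment map $\mu_X=\frac{i}{2}\sum_{i,j}a_{ij}z_i\bar z_j$ is a homogeneous quadratic polynomial, so $\iota^*(\mu_X)(\sigma,r)=r^2(\mu_X|_\Sigma)(\sigma)$ is homogeneous of order two, whereas for a translation $X=v\in\mathbb{C}^m$ the moment map is linear and $\iota^*(\mu_X)$ is homogeneous of order one. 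The real content is harmonicity.

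To prove harmonicity I would work directly from the minimality of the cone. Since a special Lagrangian submanifold is minimal, $\iota|_{C'}$ is a minimal submanifold of $\mathbb{C}^m$, and for any $F\in C^\infty(\mathbb{C}^m)$ the Laplace--Beltrami operator of the induced metric is the trace of the Euclidean Hessian over the tangent space, $\Delta_g(\iota^*F)=\sum_{a=1}^m\mathrm{Hess}\,F(e_a,e_a)$ for an orthonormal frame $e_1,\dots,e_m$ of $TC'$ (the mean-curvature term drops out). Because $C'$ is Lagrangian, its normal bundle is $J'(TC')$, so $\{J'e_a\}$ is an orthonormal frame of $\nu C'$ and the full Euclidean Laplacian splits as $\Delta_{g'}F=\sum_a\mathrm{Hess}\,F(e_a,e_a)+\sum_a\mathrm{Hess}\,F(J'e_a,J'e_a)$. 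The key algebraic input is that $\mathrm{Hess}\,\mu_X$ is $J'$-invariant for every $X\in\mathfrak{u}(m)$---it is the real symmetric form associated with the Hermitian operator $iA$, which commutes with $J'$---so the two traces on the right are equal and $\Delta_{g'}\mu_X=2\,\Delta_g(\iota^*\mu_X)$. A direct computation gives $\Delta_{g'}\mu_X=2i\,\trace A$, which vanishes precisely for $X\in\mathfrak{su}(m)$, forcing $\Delta_g(\iota^*\mu_X)=0$; in the translation case $\mathrm{Hess}\,\mu_X\equiv 0$ and harmonicity is immediate. Conceptually this reflects the fact that $\exp(tX)$ preserves $(\omega',\Omega')$ and therefore moves $C$ through special Lagrangian cones, so that $\mathrm{d}(\iota^*\mu_X)=\iota^*(X\,\lrcorner\,\omega')$ is a harmonic one-form.

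For the dimension bounds I would analyse the kernel of $X\mapsto\iota^*(\mu_X)$ using $\mathrm{d}(\iota^*\mu_X)=\iota^*(X\,\lrcorner\,\omega')=\alpha_{X^\perp}$, where $X^\perp$ is the normal part of $X$ along $C'$ and $\alpha$ is the fibrewise isomorphism from \eq{Defalpha}; since $\iota^*\mu_X$ is homogeneous of positive order, it lies in the kernel exactly when $X$ is everywhere tangent to $C'$. In case (i) tangency of $X\in\mathfrak{su}(m)$ to $C$ means precisely that $X$ belongs to the Lie algebra $\mathfrak{g}$ of $G$, so the kernel is $\mathfrak{g}$ and the image has dimension $\dim\mathfrak{su}(m)-\dim G=m^2-1-\dim G$. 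In case (ii) a nonzero constant field tangent to $C$ everywhere would make $C$ invariant under a one-parameter group of translations, hence a metric cylinder $\mathbb{R}\times C_0$, which is incompatible with $C$ having a genuine isolated singularity at the origin; the map is therefore injective and the order-one harmonics have dimension at least $2m$.

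I expect the harmonicity step to be the main technical point: it is where minimality, the Lagrangian identity $\nu C'=J'(TC')$, and the $J'$-invariance of $\mathrm{Hess}\,\mu_X$ must be combined correctly. The one genuinely delicate point in the counting is the injectivity in (ii), which relies on the non-degeneracy assumption that the origin is a true conical singularity rather than a smooth point of $C$.
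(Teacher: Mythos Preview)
The paper does not supply its own proof of this proposition; it simply cites Joyce \cite[Prop.~3.5]{Joyce2}. Your argument is correct and complete.

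Your harmonicity proof---pulling back to the minimal submanifold, splitting the ambient Laplacian via the Lagrangian decomposition $T\mathbb{C}^m=TC'\oplus J'(TC')$, and using that $\mathrm{Hess}\,\mu_X$ is $J'$-invariant because $A$ is $\mathbb{C}$-linear---is a clean direct computation. Joyce's original argument is the deformation-theoretic one you sketch as the ``conceptual'' explanation: since $SU(m)\ltimes\mathbb{C}^m$ preserves $(\omega',\Omega')$, the normal part of $X$ along $C'$ is an infinitesimal special Lagrangian deformation, so by McLean's linearisation the associated one-form $\iota^*(X\,\lrcorner\,\omega')=\mathrm d(\iota^*\mu_X)$ is harmonic, and on a cone a homogeneous exact harmonic one-form of positive degree has a harmonic primitive. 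Your computation is more self-contained and avoids invoking McLean; Joyce's route explains \emph{why} the result holds and generalises immediately to other structure-preserving group actions. It is nice that you recorded both.

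You are right to flag the injectivity in (ii) as the only genuinely delicate point. The statement as written does tacitly assume that $C$ is not a Lagrangian $m$-plane (equivalently, that $\Sigma$ is not a great $(m{-}1)$-sphere), which is Joyce's standing hypothesis; otherwise the translations tangent to the plane lie in the kernel and the bound drops to $m$. In the context of this paper the cones model genuine isolated singularities, so the assumption is in force.
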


\noindent Also note that if $C$ is a special Lagrangian cone in $\mathbb{C}^m$ and $X\in\mathfrak{u}(1)$, then $\iota^*(\mu_X)=cr^2$ for some $c\in\mathbb{R}$.

Using Proposition \ref{SLConeHarmonics} we can define the stability index of a special Lagrangian cone in $\mathbb{C}^m$ and the notion of stable special Lagrangian cones as introduced by Joyce in \cite[Def. 3.6]{Joyce2}.

\begin{Def}\label{STABILITY}
Let $C$ be a special Lagrangian cone in $\mathbb{C}^m$ as in Definition \ref{DefSLcone} and let $G$ be the maximal Lie subgroup of $SU(m)$ that preserves $C$. Then the stability index of $C$ is the integer
\[
\sindex(C)=M_{\Sigma}(2)-m^2-2m+\dim G,
\]
where $M_{\Sigma}$ is defined as in \S\ref{DiscreteASYMPTOTICS}. From Proposition \ref{SLConeHarmonics} it follows that the stability index of a special Lagrangian cone is a non-negative integer. We say that a special Lagrangian cone $C$ in $\mathbb{C}^m$ is stable if $\sindex(C)=0$.
\end{Def}

\noindent Note that if $C$ is a stable special Lagrangian cone as in Definition \ref{DefSLcone}, then the only homogeneous harmonic functions on $C'$ with rate $\alpha$, where $0\leq\alpha\leq 2$, are those induced by the $SU(m)\ltimes\mathbb{C}^m$-moment maps. In particular, when $\gamma>2$ is sufficiently small, then $V_{\mathsf{P}_{\gamma}}(C')$ is spanned by the $SU(m)\ltimes\mathbb{C}^m$-moment maps and the functions $r^2$ and $1$.

Examples of special Lagrangian cones can be found in Joyce \cite[\S 8.3.2]{JoyceBook}. Examples of stable special Lagrangian cones, however, are hard to find and there are only a few examples known. The simplest example of a stable special Lagrangian cone is the Riemannian cone in $\mathbb{C}^3$ over $T^2$ with its standard metric. In this case $C$ is given by
\[
C=\left\{\left(re^{i\phi_1},re^{i\phi_2},re^{i(\phi_1-\phi_2)}\right)\;:\;r\in[0,\infty),\;\phi_1,\phi_2\in[0,2\pi)\right\}\subset\mathbb{C}^3
\]
together with the Riemannian metric induced by the Euclidean metric on $\mathbb{C}^3$. Some other examples of stable special Lagrangian cones can be found in Ohnita \cite{Ohnita}.

\subsection{Lagrangian submanifolds with isolated conical singularities}
In this subsection we define Lagrangian submanifolds with isolated conical singularities in almost Calabi--Yau manifolds. Before we define Lagrangian submanifolds with isolated conical singularities, we have to introduce the notion of manifolds with ends.

\begin{Def}\label{ManifoldsEnds}
Let $L$ be an open and connected $m$-dimensional manifold with $m\geq 1$. Assume that we are given a compact $m$-dimensional submanifold $K\subset L$ with boundary, such that $L\backslash K$ has a finite number of pairwise disjoint, open, and connected components $S_1,\ldots,S_n$. Then $L$ is a manifold with ends $S_1,\ldots,S_n$ if the following holds. There exist compact and connected $(m-1)$-dimensional manifolds $\Sigma_1,\ldots,\Sigma_n$, a constant $R>0$, and diffeomorphisms $\phi_i:\Sigma_i\times(0,R)\rightarrow S_i$ for $i=1,\ldots,n$. We say that $S_1,\ldots,S_n$ are the ends of $L$ and that $\Sigma_i$ is the link of $S_i$. Note that the boundary of $K$ is diffeomorphic to $\bigsqcup_{i=1}^n\Sigma_i$.
\end{Def}

Next we define Lagrangian submanifolds with isolated conical singularities following Joyce \cite[Def. 3.6]{Joyce1}.

\begin{Def}\label{DefLagrangianConicalSingularities}
Let $(M,J,\omega,\Omega)$ be an $m$-dimensional almost Calabi--Yau manifold and define $\psi\in C^{\infty}(M)$ as in \eq{DefinitionPsi}. Let $x_1,\ldots,x_n\in M$ be distinct points in $M$, $C_1,\ldots,C_n$ special Lagrangian cones in $\mathbb{C}^m$ as in Definition \ref{DefSLcone} with embeddings $\iota_i:\Sigma_i\times(0,\infty)\rightarrow\mathbb{C}^m$ for $i=1,\ldots,n$, and finally let $L$ be an $m$-dimensional manifold with ends as in Definition \ref{ManifoldsEnds}. Then a Lagrangian submanifold $F:L\rightarrow M$ is a Lagrangian submanifold with isolated conical singularities at $x_1,\ldots,x_n$ modelled on the special Lagrangian cones $C_1,\ldots,C_n$, if the following holds.

We are given isomorphisms $A_i:\mathbb{C}^m\rightarrow T_{x_i}M$ for $i=1,\ldots,n$ with $A_i^*(\omega)=\omega'$ and $A_i^*(\Omega)=e^{i\theta_i+m\psi(x_i)}\Omega'$ for some $\theta_i\in\mathbb{R}$ and $i=1,\ldots,n$. Then by Theorem \ref{DarbouxTheorem} there exist $R>0$ and embeddings $\Upsilon_i:B_R\rightarrow M$ with $\Upsilon_i(0)=x_i$, $\Upsilon_i^*(\omega)=\omega'$, and $\mathrm d\Upsilon_i(0)=A_i$ for $i=1,\ldots,n$. Making $R>0$ smaller if necessary we can assume that $\Upsilon_1(B_R),\ldots,\Upsilon_n(B_R)$ are pairwise disjoint in $M$. Then there should exist diffeomorphisms $\phi_i:\Sigma_i\times(0,R)\rightarrow S_i$ for $i=1,\ldots,n$, such that $F\circ\phi_i$ maps $\Sigma_i\times(0,R)\rightarrow\Upsilon_i(B_R)$ for $i=1,\ldots,n$, and there should exist $\boldsymbol\nu\in\mathbb{R}^n$ with $2<\boldsymbol\nu<3$, such that
\begin{equation}\label{ConeCondition}
\left|\nabla^k(\Upsilon_i^{-1}\circ F\circ\phi_i-\iota_i)\right|=O(r^{\nu_i-1-k})\quad\mbox{as }r\longrightarrow 0\mbox{ for }k\in\mathbb{N}.
\end{equation}
Here $\nabla$ and $|\cdot|$ are computed using the Riemannian cone metric $\iota^*_i(g')$ on $\Sigma_i\times(0,R)$. A Lagrangian submanifold $F:L\rightarrow M$ with isolated conical singularities modelled on special Lagrangian cones $C_1,\ldots,C_n$ is said to have stable conical singularities, if $C_1,\ldots,C_n$ are stable special Lagrangian cones in $\mathbb{C}^m$.
\end{Def}

We have chosen $2<\boldsymbol\nu<3$ in Definition \ref{DefLagrangianConicalSingularities} for the following reason. We need $\nu_i>2$ or otherwise \eq{ConeCondition} does not force the submanifold $F:L\rightarrow M$ to approach the cone $A_i(C_i)$ in $T_{x_i}M$ near $x_i$ for $i=1,\ldots,n$. Moreover $\nu_i<3$ guarantees that the definition is independent of the choice of $\Upsilon_i$. Indeed, if we are given a different embedding $\tilde{\Upsilon}_i:B_R\rightarrow M$ with $\tilde{\Upsilon}_i(0)=x_i$, $\tilde{\Upsilon}_i^*(\omega)=\omega'$, and $\mathrm d\tilde{\Upsilon}_i(0)=A_i$, then $\Upsilon_i-\tilde{\Upsilon}_i=O(r^2)$ on $B_R$ by Taylor's Theorem. Therefore, since $\nu_i<3$, it follows that \eq{ConeCondition} holds with $\Upsilon_i$ replaced by $\tilde{\Upsilon}_i$.

If $F:L\rightarrow M$ is a Lagrangian submanifold with isolated conical singularities as in Definition \ref{DefLagrangianConicalSingularities}, then \eq{ConeCondition} implies that $L\sqcup\{x_1,\ldots,x_n\}$ together with the Riemannian metric $F^*(g)$ is a Riemannian manifold with conical singularities $x_1,\ldots,x_n$ in the sense of Definition \ref{DefinitionConicalSingularities}.

Let $F:L\rightarrow M$ be a Lagrangian submanifold with isolated conical singularities $x_1,\ldots,x_n$ modelled on stable special Lagrangian cones $C_1,\ldots,C_n$, and let $D$ be an operator of Laplace type on $L$ (with respect to the induced Riemannian metric $F^*(g)$). We now want to study how solutions of the Cauchy problem \eq{HeatEquation} on $L$ look like. Let $k\in\mathbb{N}$ with $k\geq 2$, $p\in(1,\infty)$, $\boldsymbol\gamma\in\mathbb{R}^n$ with $\boldsymbol\gamma>2$ and $(2,\gamma_i]\cap\mathcal{E}_{\Sigma_i}=\emptyset$, and $T>0$. If $f\in W^{0,k-2,p}_{\boldsymbol\gamma-2,\mathsf{P}_{\boldsymbol\gamma-2}^D}((0,T)\times L)$, then by Theorem \ref{SobolevRegularityHeat}, there exists a unique $u\in W^{1,k,p}_{\boldsymbol\gamma,\mathsf{P}_{\boldsymbol\gamma}^D}((0,T)\times L)$ solving the Cauchy problem \eq{HeatEquation} and, loosely speaking, $u(t,\cdot)$ is of the form
\begin{align*}
u(t,\cdot)&=O(\rho^{\boldsymbol\gamma})+\rho^2+\mbox{homogeneous harmonic functions with rate}<\boldsymbol\gamma,
\end{align*}
where $\rho$ is a radius function on $L$. Since the the conical singularities are modelled on stable special Lagrangian cones, we have in fact that
\begin{align*}
u(t,\cdot)&=O(\rho^{\boldsymbol\gamma})+U(1)-\mbox{moment maps}+SU(m)-\mbox{moment maps}\\&\quad\quad\quad\quad\quad\quad\quad\quad+\mathbb{C}^m-\mbox{moment maps}+\mbox{constants}
\end{align*}
for $t\in(0,T)$. Therefore for $t\in(0,T)$
\[
u(t,\cdot)=O(\rho^{\boldsymbol\gamma})+\mbox{geometric motions of model cones}+\mbox{constants}.
\]

The following proposition shows that the Maslov class of a Lagrangian submanifold with isolated conical singularities is an element of $H^1_{cs}(L,\mathbb{R})$, the first compactly supported de Rham cohomology group of $L$. We use this result, when we set up the short time existence problem for the Lagrangian mean curvature flow with isolated conical singularities.

\begin{Prop}\label{PPP}
Let $F:L\rightarrow M$ be a Lagrangian submanifold with isolated conical singularities as in Definition \ref{DefLagrangianConicalSingularities}. Then the Maslov class $\mu_F$ of $F:L\rightarrow M$ may be defined as an element of $H^1_{cs}(L,\mathbb{R})$.
\end{Prop}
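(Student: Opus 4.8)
The plan is to realise $\mu_F$ as the class of a \emph{compactly supported} closed one-form on $L$; its class in $H^1_{cs}(L,\mathbb{R})$ then furnishes the desired lift under the natural map $H^1_{cs}(L,\mathbb{R})\to H^1(L,\mathbb{R})$. Recall from Proposition \ref{GeneralizedMeanCurvatureForm} that $\mu_F$ is represented by the closed one-form $\mathrm d[\theta(F)]=-\alpha_K$. The entire argument rests on controlling the Lagrangian angle $\theta(F)$ near the conical singularities.

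First I would show that $\theta(F):L\to\mathbb{R}/\pi\mathbb{Z}$ converges to a constant on each end. Each model cone $C_i$ is a special Lagrangian cone, hence by Definition \ref{DefinitionSpecialLagrangian} has constant Lagrangian angle, and the normalisation $A_i^*(\Omega)=e^{i\theta_i+m\psi(x_i)}\Omega'$ of Definition \ref{DefLagrangianConicalSingularities} is precisely what pins this constant down once the cone is transported to $T_{x_i}M$ and compared with $\Omega$. Since the Lagrangian angle depends continuously on the $1$-jet of the immersion together with the ambient data $(\Omega,g,\psi)$, and since $\Upsilon_i^{-1}\circ F\circ\phi_i\to\iota_i$ in $C^1$ by the cone condition \eqref{ConeCondition} while $\Upsilon_i^*(\omega)=\omega'$ and $\Upsilon_i^*(\Omega),\Upsilon_i^*(g)$ approach their flat models at $x_i$, it follows that $\theta(F)\to\theta_i\ (\mathrm{mod}\ \pi)$ as $r\to 0$ along $S_i$ for $i=1,\dots,n$, where $\theta_i$ denotes the constant angle of the transported model cone. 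In particular, on $\phi_i(\Sigma_i\times(0,R'))$ for $R'$ small enough the $\mathbb{R}/\pi\mathbb{Z}$-valued function $\theta(F)-\theta_i$ takes values in an arc of length $<\pi$ about $0$, and hence admits a unique real-valued lift $\vartheta_i$ with $\vartheta_i\to 0$ as $r\to 0$. As $\theta_i$ is constant, $\mathrm d[\theta(F)]=\mathrm d\vartheta_i$ there.

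Next I would assemble the representative. Choose cut-off functions $\chi_i$ on $L$ equal to $1$ on $\phi_i(\Sigma_i\times(0,R'/2))$ and supported in $\phi_i(\Sigma_i\times(0,R'))$, and set
\[
\xi=\mathrm d[\theta(F)]-\sum_{i=1}^n\mathrm d(\chi_i\vartheta_i),
\]
each $\chi_i\vartheta_i$ being extended by zero to a smooth function on $L$. Then $\xi$ is closed, and on $\phi_i(\Sigma_i\times(0,R'/2))$ it equals $\mathrm d[\theta(F)]-\mathrm d\vartheta_i=0$. Thus $\xi$ vanishes for small $r$ on every end; as the only non-compact directions of $L$ are these ends approaching the singularities, $\xi$ has compact support. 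Since the subtracted terms are globally exact, $[\xi]=\mu_F$ in $H^1(L,\mathbb{R})$, and I define $\mu_F\in H^1_{cs}(L,\mathbb{R})$ to be the compactly supported class of $\xi$.

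Finally I would verify well-definedness. The lift $\vartheta_i$ is uniquely determined by the requirement $\vartheta_i\to 0$, so the only freedom is in the cut-offs: if $\chi_i,\chi_i'$ are two admissible choices, then $\chi_i-\chi_i'$ is compactly supported in $\phi_i(\Sigma_i\times(0,R'))$ (it vanishes for $r$ near $0$ and near $R'$), whence $(\chi_i-\chi_i')\vartheta_i$ is a compactly supported function and the resulting forms differ by $\mathrm d$ of a compactly supported function; so they agree in $H^1_{cs}(L,\mathbb{R})$. The main obstacle is the first step: verifying that the convergence $F\to C_i$ prescribed by \eqref{ConeCondition} genuinely forces $\theta(F)$ to converge to a constant, which requires unwinding the Lagrangian angle through the Darboux chart $\Upsilon_i$ and the normalisation of $A_i$, and is where the hypotheses $2<\boldsymbol\nu<3$ and the almost Calabi--Yau structure enter. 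Conceptually this step shows that the restriction of $\mu_F$ to each link $\Sigma_i$ vanishes, which by the long exact sequence relating $H^*_{cs}(L)$, $H^*(L)$ and $H^*(\bigsqcup_i\Sigma_i)$ is exactly the condition for $\mu_F$ to lift to $H^1_{cs}(L,\mathbb{R})$.
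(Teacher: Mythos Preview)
The paper states Proposition~\ref{PPP} without proof, so there is nothing to compare against directly. Your argument is correct and is the natural one: the essential point is that $\theta(F)$ tends to a constant on each end because the model cones $C_i$ are special Lagrangian, and this makes $\mathrm d[\theta(F)]$ exact near each singularity, so subtracting suitable cut-off primitives yields a compactly supported representative of $\mu_F$. Your verification that $A_i^*(\Omega)=e^{i\theta_i+m\psi(x_i)}\Omega'$ together with the $C^1$-convergence in \eqref{ConeCondition} (using $\nu_i>2$) forces $\theta(F)\to\theta_i$ (mod $\pi$) is right; indeed the paper implicitly confirms this later in Lemma~\ref{HELP}, equation~\eqref{POIE}, where the pulled-back Lagrangian angle on the $i$-th end is expanded as the constant $\theta_i$ plus terms of order $O(r^{\nu_i-2})$.

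One remark worth making explicit: the natural map $H^1_{cs}(L,\mathbb{R})\to H^1(L,\mathbb{R})$ has kernel of dimension $n-1$ (from the long exact sequence you mention, since $L$ and each $\Sigma_i$ are connected), so a lift of $\mu_F$ is not unique in general. Your construction singles out a \emph{canonical} lift by normalising $\vartheta_i\to 0$; this is exactly the choice the paper needs when it later ``chooses a closed one-form $\beta_0$ that represents the Maslov class and is supported on $K$''. Your well-definedness check handles the cut-off ambiguity correctly, and the normalisation $\vartheta_i\to 0$ removes the remaining $(n-1)$-dimensional ambiguity.
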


\subsection{Lagrangian neighbourhood theorems}\label{Neighbourhoods}
In this section we discuss various Lagrangian neighbourhood theorems for Lagrangian submanifolds with isolated conical singularities. Our discussion more or less follows Joyce \cite[\S 4]{Joyce1}.

We begin with a Lagrangian neighbourhood theorem for special Lagrangian cones in $\mathbb{C}^m$.

\begin{Thm}\label{LagrangianNeighbourhoodCone}
Let $\iota:C\rightarrow\mathbb{C}^m$ be a special Lagrangian cone as in Definition \ref{DefSLcone}. For $\sigma\in\Sigma$, $\tau\in T^*_{\sigma}\Sigma$, and $\varrho\in\mathbb{R}$ we denote by $(\sigma,r,\tau,\varrho)$ the point $\tau+\varrho\mathrm dr$ in $T^*_{(\sigma,r)}(\Sigma\times(0,\infty))$. For some sufficiently small $\zeta>0$ define an open neighbourhood $U_C$ of the zero section in $T^*(\Sigma\times(0,\infty))$ by
\[
U_C=\left\{(\sigma,r,\tau,\varrho)\in T^*(\Sigma\times(0,\infty))\;:\;|(\tau,\varrho)|<\zeta r\right\}.
\]
Then there exists a Lagrangian neighbourhood $\Phi_C:U_C\rightarrow\mathbb{C}^m$ for $\iota:C\rightarrow\mathbb{C}^m$.
\end{Thm}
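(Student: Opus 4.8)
The goal is to construct a Lagrangian neighbourhood $\Phi_C:U_C\to\mathbb{C}^m$ for the special Lagrangian cone $\iota:C\to\mathbb{C}^m$, meaning an embedding of a conical neighbourhood $U_C$ of the zero section in $T^*(\Sigma\times(0,\infty))$ onto a neighbourhood of $C'$ in $\mathbb{C}^m$ satisfying $\Phi_C^*(\omega')=\hat\omega$ and $\Phi_C(\sigma,r,0,0)=\iota(\sigma,r)$. The central issue, as opposed to the compact case in Theorem \ref{LagrangianNeighbourhood}, is that $C'=\Sigma\times(0,\infty)$ is noncompact with a singularity at the cone point, so one cannot simply quote the Weinstein Lagrangian Neighbourhood Theorem: one must produce a neighbourhood that \emph{scales correctly} under the dilations $r\mapsto\lambda r$, which is exactly why $U_C$ is defined by the homogeneous condition $|(\tau,\varrho)|<\zeta r$ rather than a fixed-size tube. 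The plan is therefore to exploit the $\mathbb{R}_{>0}$-dilation symmetry of $C$ to reduce the problem to the compact link $\Sigma$, construct the map there, and then extend it homogeneously.

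First I would set up the relevant identifications on the link. Fix the radius $r=1$ and consider the compact submanifold $\Sigma=\Sigma\times\{1\}$ (or rather the link as it sits in $\mathcal{S}^{2m-1}$). The cone $C'$ carries the natural dilation action $t_\lambda(\sigma,r)=(\sigma,\lambda r)$, which lifts to $T^*(\Sigma\times(0,\infty))$ and, via $\iota$, to the dilation $z\mapsto\lambda z$ on $\mathbb{C}^m$; the symplectic form transforms homogeneously, $t_\lambda^*\omega'=\lambda^2\omega'$, and likewise the canonical symplectic form $\hat\omega$ on the cotangent bundle scales with weight one on covectors. The strategy is to build $\Phi_C$ first on a slice near $r=1$ using the standard Moser-type argument that proves the compact Lagrangian neighbourhood theorem: since $C'$ is Lagrangian, the normal bundle $\nu C'$ is symplectomorphic to $T^*C'$ fibrewise via the isomorphism $\alpha$ from \eq{Defalpha}, so one obtains a diffeomorphism from a neighbourhood of the zero section onto a tubular neighbourhood of $C'$ and then corrects it by Moser's trick to intertwine $\hat\omega$ with $\omega'$.

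The main step, and the main obstacle, is to carry out this construction \emph{equivariantly} with respect to the dilations and to control the size of the neighbourhood near $r=0$. I would define $\Phi_C$ on the slice $\{1\le r\le 2\}$ (a compact piece of the cone with the link geometry), obtaining a fixed tube of some radius $\delta>0$, and then extend to all of $C'$ by setting $\Phi_C:=t_\lambda\circ\Phi_C\circ (t_\lambda)^{-1}$ on each dilated slice, i.e.\ defining $\Phi_C$ on the scaled region so that it commutes with dilations. Because the covector coordinates $(\tau,\varrho)$ scale linearly while $r$ scales linearly, the fixed tube of radius $\delta$ at $r=1$ becomes a tube of radius $\delta r$ at radius $r$, which is precisely the condition $|(\tau,\varrho)|<\zeta r$ defining $U_C$ with $\zeta=\delta$. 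The care needed is to check that the Moser correction can be chosen dilation-equivariantly — this works because the discrepancy two-form whose cohomology is being killed is itself homogeneous, so one may average or simply build the primitive to respect the $\mathbb{R}_{>0}$-symmetry — and that the resulting map remains an embedding, i.e.\ the overlapping dilated tubes glue to a single globally injective map once $\zeta$ is taken small enough.

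Finally I would verify that $\Phi_C$ so constructed is smooth up to (but not including) the cone point, is an embedding on $U_C$ for $\zeta$ sufficiently small, and satisfies $\Phi_C^*(\omega')=\hat\omega$ together with $\Phi_C(\sigma,r,0,0)=\iota(\sigma,r)$, which are automatic from the slice construction and its homogeneous extension. I expect the genuinely delicate part to be the equivariance of Moser's deformation argument and the uniform control of the embedding radius as $r\to 0$; the symplectic bookkeeping on a single compact slice is routine and is essentially identical to the proof of Theorem \ref{LagrangianNeighbourhood} in McDuff and Salamon \cite{McDuffSalamon2}.
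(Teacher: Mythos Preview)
The paper does not prove this theorem; it states it and refers to Joyce \cite[\S 4]{Joyce1} for the argument. Your plan is correct and is essentially the proof Joyce gives: exploit the $\mathbb{R}_{>0}$-dilation action $z\mapsto\lambda z$ on $\mathbb{C}^m$ (under which $\omega'$ is homogeneous of weight two) and the matching action $(\sigma,r,\tau,\varrho)\mapsto(\sigma,\lambda r,\lambda^2\tau,\lambda\varrho)$ on $T^*C'$ (under which $\hat\omega$ is likewise homogeneous of weight two, and $U_C$ is invariant), build the Lagrangian neighbourhood on a compact annular slice of the cone by the standard Weinstein--Moser argument, and extend by equivariance. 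One small point to tighten: the ``cotangent lift'' you want is \emph{not} the canonical lift of $t_\lambda$ (which preserves $\hat\omega$) but the weight-two lift above, since that is what intertwines with $t_\lambda^*\omega'=\lambda^2\omega'$; with the cone metric $g=\mathrm dr^2+r^2h$ one checks $|(\lambda^2\tau,\lambda\varrho)|_{(\sigma,\lambda r)}=\lambda|(\tau,\varrho)|_{(\sigma,r)}$, which is exactly why the tube radius scales like $r$ and the domain is $U_C$.
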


Note that if $\iota:C\rightarrow\mathbb{C}^m$ is a special Lagrangian cone and $(A,v)\in U(m)\ltimes\mathbb{C}^m$, then $\tilde{\iota}=(A,v)\circ\iota:C\rightarrow\mathbb{C}^m$ is a special Lagrangian cone that is obtained by rotating $\iota:C\rightarrow\mathbb{C}^m$ using $A$ and translating it using $v$. In particular, when $\Phi_C:U_C\rightarrow\mathbb{C}^m$ is a Lagrangian neighbourhood for $\iota:C\rightarrow\mathbb{C}^m$, then $\tilde{\Phi}_C=(A,v)\circ\Phi_C:U_C\rightarrow\mathbb{C}^m$ is a Lagrangian neighbourhood for $\tilde{\iota}:C\rightarrow\mathbb{C}^m$.

Next we discuss a Lagrangian neighbourhood theorem for Lagrangian submanifolds with isolated conical singularities. Let us first assume that $F:L\rightarrow\mathbb{C}^m$ is a Lagrangian submanifold with isolated conical singularities in $\mathbb{C}^m$ as in Definition \ref{DefLagrangianConicalSingularities}, and assume additionally that near each conical singularity $F:L\rightarrow\mathbb{C}^m$ is an exact cone, i.e. $F(x)=\iota_i(\sigma,r)$ for $x=\phi_i(\sigma,r)\in S_i$ and $i=1,\ldots,n$. Then one can construct a Lagrangian neighbourhood for $F:L\rightarrow\mathbb{C}^m$ simply by using the Lagrangian neighbourhoods given by Theorem \ref{LagrangianNeighbourhoodCone} near the conical singularities and a Lagrangian neighbourhood as given by Theorem \ref{LagrangianNeighbourhood} away from the conical singularities and glueing them appropriately together in a region away from the conical singularities. We thus have the following theorem.

\begin{Thm}\label{UYA}
Let $F:L\rightarrow\mathbb{C}^m$ be a Lagrangian submanifold with isolated conical singularities $x_1,\ldots,x_n$ and cones $C_1,\ldots,C_n$ as in Definition \ref{DefLagrangianConicalSingularities}, and assume that $F(x)=\iota_i(\sigma,r)$ for $x=\phi_i(\sigma,r)\in S_i$ and $i=1,\ldots,n$, i.e. $F:L\rightarrow\mathbb{C}^m$ is an exact cone near each singularity. Let $\Phi_{C_i}:U_{C_i}\rightarrow\mathbb{C}^m$ be a Lagrangian neighbourhood for $\iota_i:C_i\rightarrow\mathbb{C}^m$ for $i=1,\ldots,n$. Then there exists an open neighbourhood $U_L$ of the zero section in $T^*L$ with $U_L\cap T^*S_i=\mathrm d\phi_i(U_{C_i})$ for $i=1,\ldots,n$ and a Lagrangian neighbourhood $\Phi_L:U_L\rightarrow\mathbb{C}^m$ for $F:L\rightarrow\mathbb{C}^m$ such that $\Phi_L\circ\mathrm d\phi_i=\Phi_{C_i}$ on $T^*(\Sigma_i\times(0,R))$ for $i=1,\ldots,n$.
\end{Thm}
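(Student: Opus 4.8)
The plan is to carry out the standard Weinstein construction of a Lagrangian neighbourhood, but \emph{relative} to the ends, so that the prescribed cone neighbourhoods $\Phi_{C_i}$ are left completely undisturbed. Concretely, I would (i) use the $\Phi_{C_i}$ to write down the Lagrangian neighbourhood that the hypotheses force over the ends, (ii) extend it to a smooth tubular embedding over all of $L$ that is symplectic only to first order along the zero section, and (iii) correct this embedding to an honest symplectomorphism by a Moser isotopy whose generating vector field is supported in the compact core and vanishes over the ends.

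First I would set up the ends. For each $i$ the cotangent lift $\mathrm d\phi_i:T^*(\Sigma_i\times(0,R))\to T^*S_i$ preserves the tautological one-forms and hence satisfies $(\mathrm d\phi_i)^*\hat\omega=\hat\omega$. Put $W_i=\mathrm d\phi_i(U_{C_i})$, $W=\bigsqcup_{i=1}^nW_i$, and define $\Phi_E:W\to\mathbb{C}^m$ by $\Phi_E=\Phi_{C_i}\circ(\mathrm d\phi_i)^{-1}$ on $W_i$. Since $F$ is an exact cone near the singularities, $F\circ\phi_i=\iota_i$ on $S_i$, so $\Phi_E(x,0)=F(x)$ for $x$ in the ends; and since $\Phi_{C_i}^*\omega'=\hat\omega$ with $\mathrm d\phi_i$ symplectic, $\Phi_E^*\omega'=\hat\omega$ on $W$. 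Thus $\Phi_E$ is already a Lagrangian neighbourhood for $F$ over the ends, and the two conclusions $U_L\cap T^*S_i=\mathrm d\phi_i(U_{C_i})$ and $\Phi_L\circ\mathrm d\phi_i=\Phi_{C_i}$ will hold as soon as the final map agrees with $\Phi_E$ on $W$.

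Next I would extend $\Phi_E$ to a smooth embedding $\Phi_0:U_L\to\mathbb{C}^m$ of a neighbourhood of the whole zero section, with $\Phi_0=\Phi_E$ on $W$, $\Phi_0(x,0)=F(x)$ for every $x\in L$, and with $\mathrm d\Phi_0$ arranged along the zero section so as to identify $\hat\omega$ with $\omega'$ there. Over the compact core $K=L\setminus\bigsqcup_iS_i$ this amounts to extending a symplectic framing of the normal bundle: by \eq{Defalpha} the isomorphism $\alpha$ identifies $\nu L$ symplectically with $T^*L$, the framing that $\Phi_E$ determines along $\partial K$ is such a framing, and because the exact-cone condition forces $F\circ\phi_i$ and $\iota_i$ to have the same infinite jet at $r=R$, this framing matches the required zero section $F$ to infinite order along $\partial K$. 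As $K$ is compact with boundary and the fibres of the relevant bundle of symplectic tubular jets are contractible, the framing extends over $K$ without obstruction, and a normal-exponential construction turns it into the desired embedding $\Phi_0$; this is the compact-with-boundary analogue of Theorem \ref{LagrangianNeighbourhood}. By construction the closed two-form $\varpi:=\Phi_0^*\omega'-\hat\omega$ vanishes identically on $W$ and to first order along the zero section, hence is supported in a compact subset of $K$.

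The final and main step is the relative Moser correction. Set $\omega_s=\hat\omega+s\,\varpi$ for $s\in[0,1]$; each $\omega_s$ is closed and, after shrinking $U_L$, nondegenerate, since $\omega_s=\hat\omega$ along the zero section. Feeding the compactly supported form $\varpi$ into the standard homotopy operator for the fibrewise scaling $\rho_t(x,\xi)=(x,t\xi)$ produces a one-form $\kappa$ with $\mathrm d\kappa=\varpi$; because $\varpi$ vanishes to first order along the zero section one gets $\kappa=O(|\xi|^2)$, and because $\varpi$ vanishes on the fibrewise star-shaped set $W$ (the neighbourhoods $U_{C_i}=\{|(\tau,\varrho)|<\zeta r\}$ are fibrewise balls) the same formula gives $\kappa\equiv0$ on $W$. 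Defining $X_s$ by $\iota_{X_s}\omega_s=-\kappa$, the field $X_s$ vanishes along the zero section and on $W$ and is compactly supported in $K$; its flow $\psi_s$ therefore exists for all $s\in[0,1]$, fixes the zero section and $W$ pointwise, and satisfies $\psi_1^*\omega_1=\omega_0$, that is $\psi_1^*(\Phi_0^*\omega')=\hat\omega$. Then $\Phi_L:=\Phi_0\circ\psi_1$ gives $\Phi_L^*\omega'=\hat\omega$, $\Phi_L(x,0)=F(x)$, and $\Phi_L=\Phi_E$ on $W$, which is precisely the asserted compatibility. The delicate point throughout is the \emph{relative} character of the argument: one must guarantee that the Moser primitive, and hence the correcting flow, is trivial over the ends — this is exactly where the fibre-star-shapedness of the $U_{C_i}$ is used — and that all corrections stay in the compact core, so that the non-compactness of $L$ near the conical points never interferes with the existence of the isotopy.
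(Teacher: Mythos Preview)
Your proposal is correct and is precisely a detailed implementation of the paper's approach: the paper only sketches the proof in the paragraph preceding the theorem, saying one should take the cone neighbourhoods $\Phi_{C_i}$ near the singularities, a standard Lagrangian neighbourhood (Theorem~\ref{LagrangianNeighbourhood}) away from them, and ``glue them appropriately together in a region away from the conical singularities.'' Your relative Moser argument is exactly how one makes this gluing rigorous, and your observation that the fibre-star-shapedness of the $U_{C_i}$ forces the Moser primitive to vanish over the ends is the key point ensuring the correction is supported in the compact core.
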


Now let us discuss the general case, when $F:L\rightarrow M$ is a Lagrangian submanifold in an almost Calabi--Yau manifold with conical singularities as in Definition \ref{DefLagrangianConicalSingularities}. Near each conical singularity $x_i$ we are given Darboux coordinates $\Upsilon_i:B_R\rightarrow M$ with $\Upsilon_i(x_i)=0$ from Definition \ref{DefLagrangianConicalSingularities}. Then $\Upsilon_i^{-1}\circ F:S_i\rightarrow B_R\subset\mathbb{C}^m$ has a conical singularity at the origin that is modelled on the special Lagrangian cone $\iota_i:C_i\rightarrow\mathbb{C}^m$. Let $\Phi_{C_i}:U_{C_i}\rightarrow\mathbb{C}^m$ be a Lagrangian neighbourhood for $\iota_i:C_i\rightarrow\mathbb{C}^m$ as given by Theorem \ref{LagrangianNeighbourhoodCone}. Since $\Upsilon_i^{-1}\circ F:S_i\rightarrow B_R$ is asymptotic to $\iota_i:C_i\rightarrow\mathbb{C}^m$ with rate $\nu_i\in(2,3)$, it follows that the image of $\Upsilon_i^{-1}\circ F:S_i\rightarrow B_R$ will lie, at least near the conical singularity, in the set $\Phi_{C_i}(U_{C_i})$ and that we can write $\Upsilon_i^{-1}\circ F=\Phi_{C_i}\circ\mathrm da_i$ for some function $a_i\in C^{\infty}(\Sigma_i\times(0,T))$ with $|\nabla^ja_i|=O(r^{\nu_i-j})$ as $r\rightarrow 0$ for $j\in\mathbb{N}$. Following the same ideas as in the discussion prior to Theorem \ref{UYA} we then obtain the following Lagrangian neighbourhood theorem.

\begin{Thm}\label{LagrangianNeighbourhoodConicalSingularities}
Let $F:L\rightarrow\mathbb{C}^m$ be a Lagrangian submanifold in an almost Calabi--Yau manifold with conical singularities $x_1,\ldots,x_n$ and cones $C_1,\ldots,C_n$ as in Definition \ref{DefLagrangianConicalSingularities}. Let $\Phi_{C_i}:U_{C_i}\rightarrow\mathbb{C}^m$ be a Lagrangian neighbourhood for $\iota_i:C_i\rightarrow\mathbb{C}^m$ for $i=1,\ldots,n$. Then there exists an open neighbourhood $U_L$ of the zero section in $T^*L$ with $U_L\cap T^*S_i=\mathrm d\phi_i(U_{C_i})$ for $i=1,\ldots,n$, a function $a\in C^{\infty}_{\boldsymbol\nu}(L)$ with $\Gamma_{\mathrm da}\subset U_L$, and a Lagrangian neighbourhood $\Phi_L:U_L\rightarrow M$ for $F:L\rightarrow M$, such that $\Phi_L\circ\mathrm d\phi_i=\Phi_{C_i}\circ\mathrm da_i$ on $T^*(\Sigma_i\times(0,R))$ for $i=1,\ldots,n$, where $a_i=\phi_i^*(a)$ for $i=1,\ldots,n$.
\end{Thm}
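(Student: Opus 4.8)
The plan is to build $\Phi_L$ by hand in two pieces --- one over each end $S_i$ coming from the given cone neighbourhoods $\Phi_{C_i}$, and one over the compact core $K$ from Definition \ref{ManifoldsEnds} --- and then to glue them by a relative Weinstein--Moser argument confined to $K$, exactly as in the sketch preceding Theorem \ref{UYA}. The only feature not present in Theorem \ref{UYA} is that near $x_i$ the submanifold $F$ is no longer an exact cone but merely asymptotic to one: as explained before the theorem we may write $\Upsilon_i^{-1}\circ F\circ\phi_i=\Phi_{C_i}\circ\mathrm da_i$ for some $a_i\in C^{\infty}(\Sigma_i\times(0,R))$ with $|\nabla^ja_i|=O(r^{\nu_i-j})$ as $r\rightarrow 0$. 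Because $F$ is Lagrangian the one-form $\mathrm da_i$ is closed, and this is exactly what allows the deviation from the cone to be absorbed into a fibrewise translation in $T^*(\Sigma_i\times(0,R))$.

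First I would treat the ends. Let $\tau_{\beta}$ denote fibrewise translation $\xi\mapsto\xi+\beta(\pi(\xi))$ by a one-form $\beta$ on $\Sigma_i\times(0,R)$. Since $\tau_{\beta}^*\hat\lambda=\hat\lambda+\pi^*\beta$ one has $\tau_{\beta}^*\hat\omega=\hat\omega-\pi^*\mathrm d\beta$, so $\tau_{\beta}$ is a symplectomorphism whenever $\beta$ is closed; in particular $\tau_{\mathrm da_i}$ is. Each of the cotangent lift $\mathrm d\phi_i$, the translation $\tau_{\mathrm da_i}$, the cone neighbourhood $\Phi_{C_i}$, and the Darboux chart $\Upsilon_i$ preserves the relevant symplectic form, while $\tau_{\mathrm da_i}$ carries the zero section onto the graph $\Gamma_{\mathrm da_i}$; hence $\Upsilon_i\circ\Phi_{C_i}\circ\tau_{\mathrm da_i}$ maps the zero section to $\Upsilon_i\circ\Phi_{C_i}\circ\mathrm da_i=F\circ\phi_i$. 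After shrinking $\zeta$ (and $R$) so that $\tau_{\mathrm da_i}$ maps $U_{C_i}$ into $U_{C_i}$ and $\Upsilon_i\circ\Phi_{C_i}$ stays inside the Darboux ball $B_R$, I would simply \emph{define} $\Phi_L$ on $\mathrm d\phi_i(U_{C_i})$ by $\Phi_L\circ\mathrm d\phi_i=\Upsilon_i\circ\Phi_{C_i}\circ\tau_{\mathrm da_i}$. This is a Lagrangian neighbourhood for $F|_{S_i}$ with the prescribed conical domain $U_L\cap T^*S_i=\mathrm d\phi_i(U_{C_i})$, and it satisfies the final identity of the theorem on all of $T^*(\Sigma_i\times(0,R))$ by construction. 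The global function $a$ is then obtained by patching the $a_i$ into the interior with a cutoff; since $\nu_i>2$ and $a_i=O(r^{\nu_i})$, this gives $a\in C^{\infty}_{\boldsymbol\nu}(L)$ with $\Gamma_{\mathrm da}\subset U_L$ and $\phi_i^*(a)=a_i$.

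It remains to extend $\Phi_L$ over the compact core $K$, where $F|_K$ is a compact Lagrangian submanifold with boundary whose Lagrangian neighbourhood is already prescribed on a collar of $\partial K$ by the end pieces. I would first choose any tubular-neighbourhood identification of a neighbourhood of $F(K)$ in $M$ with a neighbourhood of the zero section in $T^*K$ that agrees with the end construction on the collar and pulls $\omega$ back to a two-form equal to $\hat\omega$ along the zero section. The pullback and $\hat\omega$ then coincide on the collar and on the zero section, so their difference is a closed two-form that is exact with a primitive vanishing on the collar. A relative Moser argument --- integrating the time-dependent vector field defined by this primitive along the interpolation from $\hat\omega$ to the pullback --- corrects the identification into an honest symplectomorphism equal to $\hat\omega$ and unchanged near $\partial K$. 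Concatenating with the end pieces produces the required global $\Phi_L:U_L\rightarrow M$.

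The main obstacle is this last step: a \emph{relative} Lagrangian neighbourhood theorem over the compact manifold-with-boundary $K$ with germ prescribed along $\partial K$. Theorem \ref{LagrangianNeighbourhood} is stated only for closed Lagrangian submanifolds, so the boundary version must be set up by hand; the cohomological input it needs is available because both symplectic forms equal $\hat\omega$ on the collar, but arranging the primitive, and hence the Moser isotopy, to be supported away from $\partial K$ --- so that nothing over the ends is disturbed --- is the delicate point. The remaining matters, namely keeping the translated cone neighbourhoods inside the balls $B_R$ and checking the weighted decay of $a$, are routine once $\zeta$ and $R$ are taken small enough.
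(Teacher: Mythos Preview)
Your proposal is correct and follows essentially the same route as the paper. The paper's argument is just the discussion immediately preceding the theorem: write $\Upsilon_i^{-1}\circ F\circ\phi_i=\Phi_{C_i}\circ\mathrm da_i$ near each singularity to obtain the end pieces, then glue to a compact Lagrangian neighbourhood over $K$ ``following the same ideas as in the discussion prior to Theorem~\ref{UYA}''; your use of the fibrewise translation $\tau_{\mathrm da_i}$ and the relative Moser argument over $K$ with prescribed collar data simply spell out what the paper leaves implicit.
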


So far we have only discussed Lagrangian neighbourhoods for a single Lagrangian submanifold with isolated conical singularities, but for later purposes we need to extend Theorem \ref{LagrangianNeighbourhoodConicalSingularities} to families of Lagrangian submanifolds with conical singularities. In order to do this we will follow the same ideas as in Joyce \cite[\S 5.1]{Joyce2}. Again we fix a Lagrangian submanifold $F:L\rightarrow M$ with conical singularities $x_1,\ldots,x_n$, model cones $C_1,\ldots,C_n$, and isomorphisms $A_i:\mathbb{C}^m\rightarrow T_{x_i}M$ for $i=1,\ldots,n$ as in Definition \ref{DefLagrangianConicalSingularities}. We define a fibre bundle $\mathcal{A}$ over $M$ by
\begin{gather*}
\mathcal{A}=\bigl\{(x,A)\;:\;x\in M,\;A:\mathbb{C}^m\longrightarrow T_xM,\;\;\;\;\;\;\;\;\;\;\;\;\;\;\;\;\;\;\;\;\;\;\;\;\;\;\;\;\;\;\;\;\;\;\;\;\;\;\;\;\\\;\;\;\;\;\;\;\;\;\;\;\;\;\;\;\;\;\;\;\;\;\;\;A^*(\omega)=\omega',\;A^*(\Omega)=e^{i\theta+m\psi(x)}\Omega'\mbox{ for some }\theta\in\mathbb{R}\bigr\}.
\end{gather*}
Then $B\in U(m)$ acts on $(x,A)\in\mathcal{A}_x$ by $B(x,A)=(x,A\circ B)$. This action of $U(m)$ is free and transitive on the fibres of $\mathcal{A}$ and thus $\mathcal{A}$ is a principal $U(m)$-bundle over $M$ with $\dim\mathcal{A}=m^2+2m$.

Let $G_i$ be the maximal Lie subgroup of $SU(m)$ that preserves $C_i$ for $i=1,\ldots,n$. If $(x_i,A_i)$ and $(x_i,\hat{A}_i)$ lie in the same $G_i$-orbit, then they define equivalent choices for $(x_i,A_i)$ in Definition \ref{DefLagrangianConicalSingularities}. To avoid this let $\mathcal{E}_i$ be a small open ball of dimension $\dim\mathcal{A}-\dim G_i$ containing $(x_i,A_i)$, which is transverse to the orbits of $G_i$ for $i=1,\ldots,n$. Then $G_i\cdot\mathcal{E}_i$ is open in $\mathcal{A}$. We set $\mathcal{E}=\mathcal{E}_1\times\ldots\times\mathcal{E}_n$, denote $e_0=(x_1,A_1,\ldots,x_n,A_n)$, and we equip $\mathcal{E}$ with the Riemannian metric induced by the Riemannian metric on $M$. Then $\mathcal{E}$ parametrizes all nearby alternative choices for $(x_i,A_i)$ in Definition \ref{DefLagrangianConicalSingularities}. Note that $\dim\mathcal{E}_i=m^2+2m-\dim G_i$ for $i=1,\ldots,n$ and $\dim\mathcal{E}=n(m^2+2m)-\sum_{i=1}^n\dim G_i$.

In the next lemma we prove the existence of a specific family of symplectomorphisms of $M$ that is parametrized by $e\in\mathcal{E}$ and which will allow us to construct a family of Lagrangian neighbourhoods for Lagrangian submanifolds with isolated conical singularities that are close to $F:L\rightarrow M$.

\begin{Lem}\label{LEMUU}
After making $\mathcal{E}$ smaller if necessary there exists a family $\{\Psi^e_M\}_{e\in\mathcal{E}}$ of smooth diffeomorphisms $\Psi^e_M:M\rightarrow M$, which depends smoothly on $e\in\mathcal{E}$, such that\vspace{0.17cm}
\begin{compactenum}
\item[{\rm(i)}] $\Psi^{e_0}_M$ is the identity on $M$,
\item[{\rm(ii)}] $\Psi^e_M$ is the identity on $M\backslash\bigcup_{i=1}^n\Upsilon_i(B_{R/2})$ for $e\in\mathcal{E}$,
\item[{\rm(iii)}] $(\Psi^e_M)^*(\omega)=\omega$ for $e\in\mathcal{E}$,
\item[{\rm(iv)}] $(\Psi^e_M\circ\Upsilon_i)(0)=\hat{x}_i$ and $\mathrm d(\Psi^e_M\circ\Upsilon_i)(0)=\hat{A}_i$ for $i=1,\ldots,n$ and $e\in\mathcal{E}$ with $e=(\hat{x}_1,\hat{A}_1,\ldots,\hat{x}_n,\hat{A}_n)$.
\end{compactenum}
\end{Lem}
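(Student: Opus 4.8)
The plan is to reduce the whole construction to a purely local, symplectic problem in $\mathbb{C}^m$ via the Darboux charts $\Upsilon_i$, and to solve that local problem by cutting off the Hamiltonians that generate explicit affine symplectic motions. Since the balls $\Upsilon_i(B_R)$ are pairwise disjoint, it suffices to construct, for each $i$, a family of symplectomorphisms supported in $\Upsilon_i(B_{R/2})$ and then to declare $\Psi^e_M$ to be the identity away from these balls; the $n$ constructions do not interfere.

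Fix $i$ and work in the chart, writing $\hat y_i=\Upsilon_i^{-1}(\hat x_i)$ and $L^{e,i}=\mathrm d\Upsilon_i(\hat y_i)^{-1}\circ\hat A_i$. A short computation using $\Upsilon_i^*(\omega)=\omega'$ and $\hat A_i^*(\omega)=\omega'$ shows $L^{e,i}\in\mathrm{Sp}(2m,\mathbb{R})$, and properties (iii)--(iv) translate into the requirement that the conjugated map $\phi^{e,i}=\Upsilon_i^{-1}\circ\Psi^e_M\circ\Upsilon_i$ be a symplectomorphism of $(B_R,\omega')$, supported in $B_{R/2}$, with $\phi^{e,i}(0)=\hat y_i$ and $\mathrm d\phi^{e,i}(0)=L^{e,i}$, depending smoothly on $e$ and equal to the identity at $e_0$. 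After shrinking $\mathcal{E}$ we may assume $\hat y_i\in B_{R/4}$ and that $L^{e,i}$ lies in the domain of the matrix logarithm, both depending smoothly on $e$, with $\hat y_i=0$ and $L^{e_0,i}=\mathrm{id}$ at $e=e_0$.

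I would build $\phi^{e,i}$ as a composition of two compactly supported Hamiltonian time-one maps. For the translation, choose a cutoff $\chi$ with $\chi\equiv 1$ on $B_{R/4}$ and $\mathrm{supp}\,\chi\subset B_{R/2}$, and let $\phi_1^e$ be the time-one flow of $X_{H_1^e}$ with $H_1^e(z)=\chi(z)\,\omega'(\hat y_i,z)$; since $\chi\equiv1$ near $0$ the generating field there is the constant $\hat y_i$, so for $\hat y_i$ small the trajectories through a neighbourhood of $0$ remain in $B_{R/4}$ and $\phi_1^e$ coincides with $z\mapsto z+\hat y_i$ near $0$, giving $\phi_1^e(0)=\hat y_i$ and $\mathrm d\phi_1^e(0)=\mathrm{id}$. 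For the linear part, set $\xi^e=\log L^{e,i}\in\mathfrak{sp}(2m,\mathbb{R})$, let $Q^e$ be the quadratic Hamiltonian with $X_{Q^e}(z)=\xi^e z$, and let $\phi_2^e$ be the time-one flow of $X_{\chi Q^e}$; near $0$ this is the linear field $\xi^e z$, which fixes $0$ and has time-one linearisation $\exp(\xi^e)=L^{e,i}$, so $\phi_2^e(0)=0$ and $\mathrm d\phi_2^e(0)=L^{e,i}$. Both flows are generated by Hamiltonians supported in $B_{R/2}$, hence are compactly supported symplectomorphisms equal to the identity at $e=e_0$ (where $\hat y_i=0$ and $\xi^{e_0}=0$). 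Then $\phi^{e,i}=\phi_1^e\circ\phi_2^e$ has the required $1$-jet at the origin by the chain rule, and transporting back by $\Psi^e_M=\Upsilon_i\circ\phi^{e,i}\circ\Upsilon_i^{-1}$ on $\Upsilon_i(B_{R/2})$ and $\mathrm{id}$ elsewhere yields (i)--(iv); note that the construction never uses the $\Omega$-normalisation in the definition of $\mathcal{A}$, only the symplectic conditions.

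The step I expect to require the most care is the verification that cutting off the generating Hamiltonians preserves the prescribed $1$-jet at the origin: one must check that the flow lines issuing from a neighbourhood of $0$ stay inside the region $\{\chi\equiv1\}$ for all $t\in[0,1]$, so that there the cut-off flow coincides with the genuine affine motion. This is precisely what forces the shrinking of $\mathcal{E}$, so that $|\hat y_i|$ and $\|L^{e,i}-\mathrm{id}\|$ are small, and it is also where smooth dependence on $e$ must be tracked through the logarithm, the Hamiltonians, and their flows. Everything else—disjointness of supports, symplecticity, and the normalisation at $e_0$—is then routine.
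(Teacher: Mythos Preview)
Your proposal is correct and follows essentially the same strategy as the paper: reduce to a local problem in each Darboux chart, identify the target affine symplectic data $(\hat y_i,L^{e,i})\in\mathrm{Sp}(2m,\mathbb{R})\ltimes\mathbb{R}^{2m}$ exactly as the paper does, build a compactly supported symplectomorphism of $B_R$ realizing this $1$-jet at the origin, and transport back by conjugation with $\Upsilon_i$. The only difference is that where the paper simply invokes ``standard techniques from symplectic geometry'' to produce the local model (equal to the affine map $(B_i,y_i)$ on $B_{R/4}$ and the identity on $B_R\setminus B_{R/2}$), you actually spell out one such technique via cut-off Hamiltonians for the translation and the linear part separately; your identification of the delicate step---keeping the relevant flow lines inside $\{\chi\equiv 1\}$ by shrinking $\mathcal{E}$---is exactly the content hidden in that phrase.
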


\begin{proof}
It is useful to understand the proof of Lemma \ref{LEMUU} as it explains how Lagrangian submanifolds with isolated conical singularities can be deformed.

In order to prove Lemma \ref{LEMUU} we first construct families $\{\Psi^e_i\}_{e\in\mathcal{E}}$ of diffeomorphisms $\Psi^e_i:B_R\rightarrow B_R$ for $i=1,\ldots,n$, which depend smoothly on $e\in\mathcal{E}$ and satisfy\vspace{0.17cm}
\begin{compactenum}
\item[{\rm(a)}] $\Psi^{e_0}_i$ is the identity on $B_R$ for $i=1,\ldots,n$,
\item[{\rm(b)}] $\Psi^e_i$ is the identity on $B_R\backslash B_{R/2}$ for $e\in\mathcal{E}$ and $i=1,\ldots,n$,
\item[{\rm(c)}] $(\Psi^e_i)^*(\omega')=\omega'$ for $e\in\mathcal{E}$ and $i=1,\ldots,n$,
\item[{\rm(d)}] $(\Upsilon_i\circ\Psi^e_i)(0)=\hat{x}_i$ and $\mathrm d(\Upsilon_i\circ\Psi^e_i)(0)=\hat{A}_i$ for $i=1,\ldots,n$ and $e\in\mathcal{E}$ with $e=(\hat{x}_1,\hat{A}_1,\ldots,\hat{x}_n,\hat{A}_n)$.\vspace{0,2cm}
\end{compactenum}

Let $e=(\hat{x}_1,\hat{A}_1,\ldots,\hat{x}_n,\hat{A}_n)\in\mathcal{E}$. By making $\mathcal{E}$ smaller if necessary we can assume that $\hat{x}_i\in\Upsilon_i(B_{R/4})$ for $i=1,\ldots,n$. Denote $y_i=\Upsilon_i^{-1}(\hat{x}_i)$ for $i=1,\ldots,n$ and define $B_i=(\mathrm d\Upsilon_i|_{y_i})^{-1}\circ\hat{A}_i$ for $i=1,\ldots,n$. Since $\Upsilon_i^*(\omega)=\omega'$, $B_i\in Sp(2m,\mathbb{R})$ and so $(B_i,y_i)\in Sp(2m,\mathbb{R})\ltimes\mathbb{R}^{2m}$. Here $Sp(2m)$ is the automorphism group of $(\mathbb{R}^{2m},\omega')$. Using standard techniques from symplectic geometry we can now define families $\{\Psi^e_i\}_{e\in\mathcal{E}}$ of diffeomorphisms $\Psi^e_i:B_R\rightarrow B_R$ for $i=1,\ldots,n$, which depend smoothly on $e\in\mathcal{E}$, such that \rm{(a)}, \rm{(b)}, and \rm{(c)} hold, and such that $\Psi^e_i=(B_i,y_i)$ on $B_{R/4}$ for $i=1,\ldots,n$. But then by definition of $(B_i,y_i)$ we see that {\rm(d)} holds for $i=1,\ldots,n$.

Now we define $\Psi^e_M:M\rightarrow M$ to be $\Upsilon_i\circ\Psi^e_i\circ\Upsilon_i^{-1}$ on $\Upsilon_i(B_R)$ for $i=1,\ldots,n$ and the identity on $M\backslash\bigcup_{i=1}^n\Upsilon_i(B_R)$. This is clearly possible, since $\Upsilon_1(B_R),\ldots,\Upsilon_n(B_R)$ are pairwise disjoint in $M$. Since $\Psi^e_i$ satisfies $\rm{(a)}-\rm{(d)}$ for $i=1,\ldots,n$, it follows that $\Psi^e_M:M\rightarrow M$ is a family of smooth diffeomorphisms of $M$, which depends smoothly on $e\in\mathcal{E}$ and satisfies $\rm{(i)}-\rm{(iv)}$.
\end{proof}

Using Lemma \ref{LEMUU} we can now state a Lagrangian neighbourhood theorem which gives us a family of Lagrangian neighbourhoods for Lagrangian submanifolds with isolated conical singularities that are close to $F:L\rightarrow M$.

\begin{Thm}\label{LagrangianNeighbourhoodConeFamily1}
Let $(M,J,\omega,\Omega)$ be an $m$-dimensional almost Calabi--Yau manifold and $F:L\rightarrow M$ a Lagrangian submanifold with isolated conical singularities $x_1,\ldots,x_n$ and cones $C_1,\ldots,C_n$ as in Definition \ref{DefLagrangianConicalSingularities}. Moreover let $\Phi_{C_i}:U_{C_i}\rightarrow\mathbb{C}^m$ be a Lagrangian neighbourhood for $\iota_i:C_i\rightarrow\mathbb{C}^m$ for $i=1,\ldots,n$ as given by Theorem \ref{LagrangianNeighbourhoodCone} and let $\Phi_L:U_L\rightarrow M$ be a Lagrangian neighbourhood for $F:L\rightarrow M$ as given by Theorem \ref{LagrangianNeighbourhoodConicalSingularities}. Finally let $\mathcal{E}$ and $e_0$ be as above and choose $\{\Psi^e_M\}_{e\in\mathcal{E}}$ as given by Lemma \ref{LEMUU}.

Define smooth families $\{\Upsilon_i^e\}_{e\in\mathcal{E}}$ of embeddings $\Upsilon^e_i:B_R\rightarrow M$ by $\Upsilon^{e}_i=\Psi^{e}_M\circ\Upsilon_i$ for $i=1,\ldots,n$, and a smooth family $\{\Phi^e_L\}_{e\in\mathcal{E}}$ of embeddings $\Phi^e_L:U_L\rightarrow M$ by $\Phi^e_L=\Psi^e_M\circ\Phi_L$. Then $\{\Upsilon_i^e\}_{e\in\mathcal{E}}$ and $\{\Phi^e_L\}_{e\in\mathcal{E}}$ depend smoothly on $e\in\mathcal{E}$ for $i=1,\ldots,n$, and\vspace{0.17cm}
\begin{compactenum}
\item[{\rm(i)}] $\Upsilon_i^{e_0}=\Upsilon_i$ and $(\Upsilon_i^e)^*(\omega)=\omega'$, and for every $e\in\mathcal{E}$ with $e=(\hat{x}_1,\hat{A}_1,\ldots,\hat{x}_n,\hat{A}_n)$ we have that $\Upsilon^e_i(0)=\hat{x}_i$, and $\mathrm d\Upsilon_i^e(0)=\hat{A}_i$,
\item[{\rm(ii)}] $\Phi^{e_0}_L=\Phi_L$ and $(\Phi^e_L)^*(\omega)=\hat{\omega}$, and for every $e\in\mathcal{E}$ we have that $\Phi^e_L\equiv\Phi_L$ on $\pi^{-1}(L\backslash\bigcup_{i=1}^nS_i')\subset U_L$, where $S_i'=\phi_i(\Sigma_i\times(0,\frac{R}{2}))$.\vspace{0,2cm}
\end{compactenum}

\noindent Moreover $\Phi^e_L\circ\mathrm d\phi_i=\Upsilon_i^e\circ\Phi_{C_i}\circ\mathrm da_i$ on $T^*(\Sigma_i\times(0,\frac{R}{2}))$ for every $e\in\mathcal{E}$ and $i=1,\ldots,n$. Finally for $e\in\mathcal{E}$ with $e=(\hat{x}_1,\hat{A}_1,\ldots,\hat{x}_n,\hat{A}_n)$, $\Phi^e_L\circ 0:L\rightarrow M$ is a Lagrangian submanifold with isolated conical singularities $\hat{x}_1,\ldots,\hat{x}_n\in M$ modelled on the special Lagrangian cones $C_1,\ldots,C_n$ with isomorphisms $\hat{A}_i:T_{\hat{x}_i}M\rightarrow\mathbb{C}^m$ for $i=1,\ldots,n$ as in Definition \ref{DefLagrangianConicalSingularities}.
\end{Thm}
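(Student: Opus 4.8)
The plan is to verify each assertion by direct computation from the definitions $\Upsilon_i^e=\Psi^e_M\circ\Upsilon_i$ and $\Phi^e_L=\Psi^e_M\circ\Phi_L$, feeding in the four properties of the family $\{\Psi^e_M\}_{e\in\mathcal{E}}$ supplied by Lemma \ref{LEMUU}. Smooth dependence on $e$ is immediate throughout, since $\Psi^e_M$ depends smoothly on $e$. For (i), property (i) of Lemma \ref{LEMUU} gives $\Upsilon_i^{e_0}=\Upsilon_i$; property (iii) together with $\Upsilon_i^*(\omega)=\omega'$ from Definition \ref{DefLagrangianConicalSingularities} gives $(\Upsilon_i^e)^*(\omega)=\Upsilon_i^*((\Psi^e_M)^*(\omega))=\omega'$; and property (iv) gives $\Upsilon_i^e(0)=\hat{x}_i$ and $\mathrm d\Upsilon_i^e(0)=\hat{A}_i$. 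For (ii), property (i) gives $\Phi^{e_0}_L=\Phi_L$, and property (iii) with $\Phi_L^*(\omega)=\hat{\omega}$ gives $(\Phi^e_L)^*(\omega)=\Phi_L^*((\Psi^e_M)^*(\omega))=\hat{\omega}$.

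The one assertion in (ii) that is not purely formal is $\Phi^e_L\equiv\Phi_L$ on $\pi^{-1}(L\backslash\bigcup_iS_i')$. By property (ii) of Lemma \ref{LEMUU}, $\Psi^e_M$ is the identity on $M\backslash\bigcup_i\Upsilon_i(B_{R/2})$, so it suffices to check that $\Phi_L$ maps $\pi^{-1}(L\backslash\bigcup_iS_i')$ into $M\backslash\bigcup_i\Upsilon_i(B_{R/2})$. On the outer ends $\phi_i(\Sigma_i\times[R/2,R))$ we have, from Theorem \ref{LagrangianNeighbourhoodConicalSingularities}, $\Phi_L\circ\mathrm d\phi_i=\Upsilon_i\circ\Phi_{C_i}\circ\mathrm da_i$; since $U_{C_i}$ is the thin cone $\{|(\tau,\varrho)|<\zeta r\}$ and $a_i$ decays like $r^{\nu_i}$, the image stays at Euclidean radius close to $r\geq R/2$ for $\zeta$ small, hence outside $B_{R/2}$, while over the compact core the same containment holds after shrinking $U_L$. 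This is the step requiring geometric care rather than formal manipulation, and it is where one may have to shrink the neighbourhood. Granting it, the intertwining relation is forced: on $T^*(\Sigma_i\times(0,\frac{R}{2}))$,
\[
\Phi^e_L\circ\mathrm d\phi_i=\Psi^e_M\circ\Phi_L\circ\mathrm d\phi_i=\Psi^e_M\circ\Upsilon_i\circ\Phi_{C_i}\circ\mathrm da_i=\Upsilon_i^e\circ\Phi_{C_i}\circ\mathrm da_i.
\]

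It remains to show $\hat{F}:=\Phi^e_L\circ 0$ is a Lagrangian submanifold with isolated conical singularities at $\hat{x}_1,\ldots,\hat{x}_n$ modelled on $C_1,\ldots,C_n$ with isomorphisms $\hat{A}_i$. Since $\Phi_L\circ 0=F$, we have $\hat{F}=\Psi^e_M\circ F$, and as $(\Psi^e_M)^*(\omega)=\omega$ we get $\hat{F}^*(\omega)=F^*(\omega)=0$, so $\hat{F}$ is Lagrangian. For the conical structure we test \eq{ConeCondition} in the new Darboux charts $\Upsilon_i^e$, and the crucial point is a cancellation: on the end $S_i$,
\[
(\Upsilon_i^e)^{-1}\circ\hat{F}\circ\phi_i=(\Psi^e_M\circ\Upsilon_i)^{-1}\circ\Psi^e_M\circ F\circ\phi_i=\Upsilon_i^{-1}\circ F\circ\phi_i,
\]
so the asymptotics of $\hat{F}$ in the chart $\Upsilon_i^e$ are literally those of $F$ in the chart $\Upsilon_i$, which satisfy \eq{ConeCondition} with rate $\nu_i$ by hypothesis. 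Finally, by construction of $\mathcal{E}$ each $(\hat{x}_i,\hat{A}_i)$ lies in $\mathcal{A}$, so $\hat{A}_i^*(\omega)=\omega'$ and $\hat{A}_i^*(\Omega)=e^{i\hat{\theta}_i+m\psi(\hat{x}_i)}\Omega'$ for a suitable $\hat{\theta}_i$, which verifies the remaining hypotheses of Definition \ref{DefLagrangianConicalSingularities}. The main obstacle is thus the image-containment in (ii); everything else is bookkeeping resting on the cancellation $(\Psi^e_M)^{-1}\circ\Psi^e_M=\mathrm{id}$.
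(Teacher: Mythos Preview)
Your proof is correct and is precisely the intended argument: the paper states Theorem~\ref{LagrangianNeighbourhoodConeFamily1} without proof, treating it as an immediate consequence of Lemma~\ref{LEMUU} and the definitions $\Upsilon_i^e=\Psi^e_M\circ\Upsilon_i$, $\Phi^e_L=\Psi^e_M\circ\Phi_L$, together with Theorem~\ref{LagrangianNeighbourhoodConicalSingularities}. Your verification unwinds exactly these ingredients, and your observation that the only genuinely non-formal step is the image containment $\Phi_L\bigl(\pi^{-1}(L\setminus\bigcup_iS_i')\bigr)\subset M\setminus\bigcup_i\Upsilon_i(B_{R/2})$ (handled by shrinking $U_L$ and using the cone-shaped form of $U_{C_i}$) is well placed; the cancellation $(\Upsilon_i^e)^{-1}\circ\hat F\circ\phi_i=\Upsilon_i^{-1}\circ F\circ\phi_i$ is indeed the key to the final assertion.
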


We need a slightly extended version of Theorem \ref{LagrangianNeighbourhoodConeFamily1}. Recall that every element in $\mathfrak{u}(m)\oplus\mathbb{C}^m\oplus\mathbb{R}$ gives rise to a unique moment map as explained in \S\ref{SLCONESQQQ}. What we have done until now is to use the $U(m)\ltimes\mathbb{C}^m$-moment maps to construct a family of Lagrangian neighbourhoods for the Lagrangian submanifolds with conical singularities that are close to $F:L\rightarrow M$. So what we have not taken into account so far is the $\mathbb{R}$-part of the moment maps. In order to do this let us choose functions $q_1,\ldots,q_n\in C^{\infty}(L)$ with $q_i\equiv 1$ on $S_i'$ and $q_i\equiv 0$ on $M\backslash S_i$ for $i=1,\ldots,n$, where $S_i'=\phi_i(\Sigma_i\times(0,\frac{R}{2}))$ for $i=1,\ldots,n$. Moreover let $\mathcal{U}_1,\ldots,\mathcal{U}_n\subset\mathbb{R}$ be small open neighbourhoods of the origin in $\mathbb{R}$, define $\mathcal{F}_i=\mathcal{E}_i\times\mathcal{U}_i$ for $i=1,\ldots,n$, and denote $\mathcal{F}=\mathcal{F}_1\times\cdots\times\mathcal{F}_n$. Then we define an open neighbourhood $U_L'$ of the zero section in $T^*L$ by
\[
U_L'=\left\{(x,\beta)\in U_L\;:\;\Gamma_{\beta+\sum_{i=1}^nc_i\mathrm dq_i}\subset U_L\mbox{ for every }c_i\in\mathcal{U}_i\right\}
\]
and we define a family $\{\Phi^f_L\}_{f\in\mathcal{F}}$ of Lagrangian neighbourhoods $\Phi^f_L:U_L\rightarrow M$ by $\Phi^f_L=\Phi^e_L\circ\sum_{i=1}^nc_i\mathrm dq_i$, where $f=(e_1,c_1,\ldots,e_n,c_n)$ and $e=(e_1,\ldots,e_n)\in\mathcal{E}$. We denote $f_0=(e_1,0,\ldots,e_n,0)$, where $e_0=(e_1,\ldots,e_n)$. Then $\Phi^{f_0}_L=\Phi_L$ is a Lagrangian neighbourhood for $F:L\rightarrow M$ and $\{\Phi^f_L\}_{f\in\mathcal{F}}$ is a family of Lagrangian neighbourhoods for the Lagrangian submanifolds with isolated conical singularities that are close to $F:L\rightarrow M$, which depends smoothly on $f\in\mathcal{F}$.

\section{Mean curvature flow of Lagrangian submanifolds with conical singularities}\label{DITISKLASSE}
\subsection{Setting up the short time existence problem}
From now on we fix an almost Calabi--Yau manifold $(M,J,\omega,\Omega)$, we define $\psi\in C^{\infty}(M')$ as in \eq{DefinitionPsi}, and we fix a Lagrangian submanifold $F_0:L\rightarrow M$ with isolated conical singularities $x_1,\ldots,x_n$ and model cones $C_1,\ldots,C_n$ as in Definition \ref{DefLagrangianConicalSingularities}. Later on we will also assume that the special Lagrangian cones $C_1,\ldots,C_n$ are stable. Moreover we define the manifold $\mathcal{F}$ as in \S\ref{Neighbourhoods} and we choose a family of Lagrangian neighbourhoods $\{\Phi^f_L\}_{f\in\mathcal{F}}$ as given by Theorem \ref{LagrangianNeighbourhoodConeFamily1} and the discussion following that theorem. Then $\Phi^{f_0}_L:U_L'\rightarrow M$ is a Lagrangian neighbourhood for $F_0:L\rightarrow M$.

The main difference between the setup of the short time existence problem for the generalized Lagrangian mean curvature flow of $F_0:L\rightarrow M$ to the one we chose in \S\ref{GULLI} for a compact Lagrangian submanifold is that we have to take into account that the parameter $f\in\mathcal{F}$ will change during the flow. In particular, when we try to find an integrated form for the generalized Lagrangian mean curvature flow, we expect to find an equation that not only involves the potential function $u$, which is a function on $L$ with a reasonable decay rate near each singularities, as in \S\ref{GULLI} but that also involves the parameter $f\in\mathcal{F}$, which describes the motion of the conical singularities.

In order to find the integrated form for the generalized Lagrangian mean curvature flow of $F_0:L\rightarrow M$, we first need to study the deformation vector field of the family $\{\Phi^f_L\}_{f\in\mathcal{F}}$. Let $\boldsymbol\mu\in\mathbb{R}^n$ with $\boldsymbol\mu>2$ and $u\in C^{\infty}_{\boldsymbol\mu}(L)$ with $\Gamma_{\mathrm du}\subset U_L'$. Then for every $f\in\mathcal{F}$, $\Phi^f_L\circ\mathrm du:L\rightarrow M$ is a Lagrangian submanifold with conical singularities modelled on $C_1,\ldots,C_n$. Let $v\in T\mathcal{F}$. If we differentiate $\Phi^f_L\circ\mathrm du$ with respect to $f$ in direction of $v$, then we obtain a section $\partial_v(\Phi^f_L\circ\mathrm du)$ of the vector bundle $(\Phi^f_L\circ\mathrm du)^*(TM)$ over $L$. Now recall how $\Phi^f_L:U_L'\rightarrow M$ was constructed. We started with a fixed Lagrangian neighbourhood $\Phi^{f_0}_L:U_L'\rightarrow M$ for $F_0:L\rightarrow M$ and then we obtained Lagrangian neighbourhoods for the nearby Lagrangian submanifolds with isolated conical singularities by applying $U(m)$-rotations and $\mathbb{C}^m$-translations near the conical singularities. When we studied deformations of special Lagrangian cones in $\mathbb{C}^m$ in \S\ref{SLCONESQQQ}, we introduced the idea of moment maps, which are the Hamiltonian potentials of the $\mathfrak{u}(m)\oplus\mathbb{C}^m\oplus\mathbb{R}$-vector fields on $\mathbb{C}^m$. Now in our situation, $\partial_v(\Phi^f_L\circ\mathrm du)$ corresponds, in an asymptotic sense at least, to one of these vector fields, and therefore $\partial_v(\Phi^f_L\circ\mathrm du)$ should have a Hamiltonian potential that is asymptotic to a moment map. In fact we have the following important proposition which can be found in \cite[\S 9.1]{Behrndt1}.

\begin{Prop}\label{MapXi}
Let $\boldsymbol\mu\in\mathbb{R}^n$ with $\boldsymbol\mu>2$, $f\in\mathcal{F}$, and $v\in f^*(T\mathcal F)$. Then there exists a smooth vector field $X_f(v)$ on $M$ that depends linearly on $v$ and satisfies
\[
(\Phi^f_L\circ\mathrm du)^*(X_f(v))=\partial_v(\Phi^f_L\circ\mathrm du)
\]
for every $u\in C^{\infty}_{\boldsymbol\mu}(L)$ with $\Gamma_{\mathrm du}\subset U_L'$. Moreover there exists $H_f(v)\in C^{\infty}(M)$ that depends linearly on $v\in f^*(T\mathcal F)$ such that $d[H_f(v)]=X_f(v)\;\lrcorner\;\omega$. 

Finally, if $\boldsymbol\mu\in\mathbb{R}^n$ with $2<\boldsymbol\mu<\boldsymbol\nu$ and $u\in C^{\infty}_{\boldsymbol\mu}(L)$ with $\Gamma_{\mathrm du}\subset U_L'$ we define
\[
\Xi_{(u,f)}:f^*(T\mathcal F)\longrightarrow C^{\infty}(M),\quad\Xi_{(u,f)}(v)=(\Phi^f_L\circ\mathrm du)^*(H_f(v)).
\]
Then
\[
\mathrm d[\Xi_{(u,f)}(v)]=(\Phi^f_L\circ\mathrm du)^*(\partial_v(\Phi^f_L\circ\mathrm du)\;\lrcorner\;\omega),
\]
and $\Xi_{(u,f)}(v)$ is asymptotic to a moment map in the following sense.

Let $f=(\hat{e}_1,c_1,\ldots,\hat{e}_n,c_n)\in\mathcal{F}$ with $\hat{e}_i=(\hat{x}_i,\hat{A}_i)$ for $i=1,\ldots,n$, denote $f_i=(\hat{x}_i,\hat{A}_i,c_i)$ for $i=1,\ldots,n$, and let $v=(v_1,\ldots,v_n)\in T_{f_1}\mathcal{F}_1\oplus\cdots\oplus T_{f_n}\mathcal{F}_n$. Then the following holds.\vspace{0.17cm}
\begin{compactenum}
\item[{\rm(i)}] If $v_i\in T_{\hat{A}_i}\mathcal{A}_{\hat{x}_i}$, then there exists a unique $X_i\in\mathfrak{u}(m)\oplus\mathbb{R}$, such that 
\[
\bigl|\nabla^j(\phi_i^*(\Xi_{(u,f)}(v))-\iota_i^*(\mu_{X_i}))\bigr|=O(r^{\mu_i-j})\quad\mbox{as }r\rightarrow 0
\]
for $j\in\mathbb{N}$.
\item[{\rm(ii)}] If $v_i\in T_{\hat{x}_i}M$, then there exists a unique $X_i\in\mathbb{C}^m\oplus\mathbb{R}$, such that 
\[
\bigl|\nabla^j(\phi_i^*(\Xi_{(u,f)}(v))-\iota_i^*(\mu_{X_i}))\bigr|=O(r^{\mu_i-1-j})\quad\mbox{as }r\rightarrow 0
\]
for $j\in\mathbb{N}$.\vspace{0.17cm}
\end{compactenum}
\end{Prop}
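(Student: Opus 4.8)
The plan is to construct $X_f(v)$ and $H_f(v)$ explicitly from the two ways in which $\Phi^f_L$ depends on $f$, to get the displayed differential identity formally, and then to extract the asymptotics on each end from the local model of Theorem~\ref{LagrangianNeighbourhoodConeFamily1}. Writing $f=(e_1,c_1,\ldots,e_n,c_n)$, I would split $v$ into a part $v_{\mathcal E}$ tangent to $\mathcal E$ and a part $v_{\mathcal U}$ tangent to the $c_i$-factors, and recall $\Phi^f_L=\Psi^e_M\circ\Phi_L\circ\sum_ic_i\mathrm dq_i$. Since $\{\Psi^e_M\}_{e\in\mathcal E}$ is a smooth family of symplectomorphisms of $M$ (Lemma~\ref{LEMUU}), differentiating in direction $v_{\mathcal E}$ yields a global symplectic vector field $Y=(\partial_{v_{\mathcal E}}\Psi^e_M)\circ(\Psi^e_M)^{-1}$ on $M$, so that $\partial_{v_{\mathcal E}}(\Phi^f_L\circ\mathrm du)=(\Phi^f_L\circ\mathrm du)^*(Y)$. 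For the $c$-part, the variation $\sum_i\dot c_i\mathrm dq_i$ is, via Lemma~\ref{Var}, the vertical fibre-translation field on $T^*L$; pushing it forward by the symplectomorphism $\Phi^e_L$ gives a field $Z$ on its image with $\partial_{v_{\mathcal U}}(\Phi^f_L\circ\mathrm du)=(\Phi^f_L\circ\mathrm du)^*(Z)$. Setting $X_f(v)=Y+Z$, extended smoothly to $M$, proves the first assertion, with linearity in $v$ and smoothness in $f$ inherited from the families.

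Next I would produce the potential. The form $X_f(v)\lrcorner\omega$ is closed because $X_f(v)$ is symplectic: $Y$ generates symplectomorphisms, so $\mathcal L_Y\omega=0$; and $Z$ is the $\Phi^e_L$-image of a vertical field whose $\hat\omega$-contraction is $\pm\pi^*(\sum_i\dot c_i\mathrm dq_i)$. For exactness on $M$ I would note that $Y$ is supported in the disjoint contractible Darboux balls $\Upsilon_i(B_{R/2})$, so $Y\lrcorner\omega=\mathrm dH_Y$ for a global $H_Y$ normalised to vanish outside them, while $\sum_i\dot c_i\mathrm dq_i=\mathrm d(\sum_i\dot c_iq_i)$ is exact, making $Z$ Hamiltonian with $H_Z=-(\pi^*\sum_i\dot c_iq_i)\circ(\Phi^e_L)^{-1}$, extended to $M$. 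Put $H_f(v)=H_Y+H_Z$. The identity for $\mathrm d[\Xi_{(u,f)}(v)]$ is then immediate from naturality of pullback: $\mathrm d[(\Phi^f_L\circ\mathrm du)^*H_f(v)]=(\Phi^f_L\circ\mathrm du)^*(X_f(v)\lrcorner\omega)$, and substituting $(\Phi^f_L\circ\mathrm du)^*X_f(v)=\partial_v(\Phi^f_L\circ\mathrm du)$ gives the claim.

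The substantive work is the asymptotic statement, which I would analyse on each end $S_i'=\phi_i(\Sigma_i\times(0,\tfrac R2))$. There $\sum_jc_j\mathrm dq_j$ and its $c$-derivative vanish, so from the translation part only the constant $-\dot c_i$ survives (through $H_Z$), contributing to the additive constant $\mu_{X_i}(0)$. By Theorem~\ref{LagrangianNeighbourhoodConeFamily1}, $\phi_i^*(\Phi^f_L\circ\mathrm du)=\Upsilon^e_i\circ\Phi_{C_i}\circ\mathrm d(a_i+u_i)$ on $S_i'$, and for small $r$ the map $\Psi^e_M$ is affine in the Darboux chart, so $\Upsilon^e_i=\Upsilon_i\circ g$ with $g=(B_i,y_i)\in Sp(2m,\mathbb R)\ltimes\mathbb R^{2m}$. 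Differentiating in $v_i$ produces, in these coordinates, the affine vector field generated by the right-logarithmic derivative $g^{-1}\dot g$, and since $\Upsilon_i^*\omega=\omega'$ and every $X\in\mathfrak{sp}(2m)\ltimes\mathbb R^{2m}$ has $X\lrcorner\omega'=\mathrm d\mu_X$, I obtain $\phi_i^*\Xi=\mu_{g^{-1}\dot g}\circ\rho+\mathrm{const}$ with $\rho=g\circ\Phi_{C_i}\circ\mathrm d(a_i+u_i)\approx g\circ\iota_i$. The crucial point is the identification of $g^{-1}\dot g$: in case~(i) the frame $\hat A_i$ varies by right-multiplication by $U(m)$ and $\hat x_i$ is fixed, so $g^{-1}\dot g\in\mathfrak u(m)$ \emph{exactly}, whence $\iota_i^*\mu_{g^{-1}\dot g}$ is a genuine order-two $\mathfrak u(m)$-moment map; because $\mu_i>2$, the rate $O(r^{\mu_i-j})$ retains this leading term while absorbing the $O(r^{\mu_i-1})$ perturbation coming from $a_i\in C^\infty_{\boldsymbol\nu}$, $u_i\in C^\infty_{\boldsymbol\mu}$ (using $\boldsymbol\mu<\boldsymbol\nu$ and the homogeneity $|\nabla\mu|=O(r)$). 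In case~(ii) $g^{-1}\dot g$ may carry an order-two (linear) part alongside its $\mathbb C^m$-translation, but since $\mu_i<3$ one has $r^2=O(r^{\mu_i-1})$, so this remainder is swallowed by the weaker rate $O(r^{\mu_i-1-j})$, and $X_i$ records only the $\mathbb C^m$-part together with the constant.

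Finally, uniqueness of $X_i$ follows from $\mu_i>2$: as $\iota_i^*\mu_{X_i}$ is a sum of homogeneous functions of orders $\le 2$, two admissible choices differ by a quantity that is simultaneously $O(r^{\mu_i})$ and of order $\le 2$, hence vanishes on $C_i'$; this pins down $X_i$ up to the Lie algebra of the stabiliser $G_i$, which is removed by the transversality of the slice $\mathcal E_i$ to the $G_i$-orbits. I expect the asymptotic step to be the main obstacle: the bookkeeping of the affine normalisation $g=(B_i,y_i)$ at a general $f$, the exact landing of $g^{-1}\dot g$ in $\mathfrak u(m)$ versus $\mathbb C^m$, the precise matching of additive constants from $H_Y$ and $H_Z$, and the calibrated propagation of the weighted estimates through $\Phi_{C_i}$ and the moment map all demand careful use of the cone's homogeneity, whereas the construction of $X_f(v)$, $H_f(v)$ and the identity for $\mathrm d\Xi$ are essentially formal.
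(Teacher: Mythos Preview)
The paper does not actually prove this proposition here; it is stated with a reference to \S 9.1 of the author's thesis \cite{Behrndt1}, so there is no in-paper argument against which to compare. That said, your construction is the natural one and almost certainly what the thesis does: build $X_f(v)$ from the two sources of $f$-dependence (the ambient symplectomorphisms $\Psi^e_M$ and the fibre-translations by $\sum_i c_i\,\mathrm dq_i$), observe both pieces are Hamiltonian with potentials supported in the disjoint Darboux balls (respectively given by $-\pi^*\!\sum_i\dot c_iq_i$), and then read off the cone asymptotics from the affine form $\Psi^e_i=(B_i,y_i)$ on $B_{R/4}$ supplied by the proof of Lemma~\ref{LEMUU}.

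One small slip worth noting: you call $g^{-1}\dot g$ the ``right-logarithmic derivative'' --- it is the left one --- and the intermediate formula $\phi_i^*\Xi=\mu_{g^{-1}\dot g}\circ\rho$ is not quite right as written. The ambient field in the $\Upsilon_i$-chart is $\tilde Y=\dot g\circ g^{-1}$, so one first has $\phi_i^*\Xi\approx\mu_{\tilde Y}\circ\rho$ with $\rho=g\circ\Phi_{C_i}\circ\mathrm d(a_i+u_i)$; the key step, which you do use correctly, is that $\mu_{\tilde Y}\circ g=\mu_{g^{-1}\dot g}+\text{const}$, and it is \emph{this} moment map that one compares along $\iota_i$. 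In case~(i), since the fibre of $\mathcal A$ is a $U(m)$-torsor and $\dot y_i=0$, one gets $g^{-1}\dot g=(B_i^{-1}\dot B_i,0)\in\mathfrak u(m)$ exactly, and the purely quadratic nature of $\mu_{g^{-1}\dot g}$ is precisely what upgrades the error from $O(r^{\mu_i-1})$ to $O(r)\cdot O(r^{\mu_i-1})=O(r^{\mu_i})$. Your handling of case~(ii) and of uniqueness via the transversal slice $\mathcal E_i$ is correct.
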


Using Proposition \ref{MapXi} we are now ready to integrate the generalized Lagrangian mean curvature flow for $F_0:L\rightarrow M$. By Proposition \ref{PPP} we can choose a closed one-form $\beta_0$ that represents the Maslov class of $F_0:L\rightarrow M$ and that is supported on $K$. Then, in particular, $\beta_0$ is zero near each conical singularity. Let $T>0$ be small and define a one-parameter family $\{\beta(t)\}_{t\in(0,T)}$ of closed one-forms by $\beta(t)=t\beta_0$ for $t\in(0,T)$. Then $\{\beta(t)\}_{t\in(0,T)}$ extends continuously to $t=0$ with $\beta(0)=0$. Finally we choose $\Theta$ as in \S\ref{GULLI} and then we define an operator $P$ as follows. The domain of the operator $P$ is given by
\begin{align*}
&\mathcal{D}=\left\{(u,f)\;:\;u\in C^{\infty}((0,T)\times L),f\in C^{\infty}((0,T);\mathcal{F}),\;\right.\\&\left.\quad\quad\quad\;u\mbox{ and }f\mbox{ extend continuously to }t=0,\;\Gamma_{\mathrm du(t,\cdot)+\beta(t)}\subset U_L'\mbox{ for }t\in(0,T)\right\}
\end{align*}
and we define $P:\mathcal{D}\longrightarrow C^{\infty}((0,T)\times L)$ by 
\begin{equation*}
P(u,f)=\frac{\partial u}{\partial t}-\Theta(\Phi^f_{L}\circ(\mathrm du+\beta))-\Xi_{(u,f)}\left(\frac{\mathrm df}{\mathrm dt}\right).
\end{equation*}

We now consider the following Cauchy problem
\begin{align}\label{CauchyProblem2}
\begin{split}
&P(u,f)(t,x)=0\quad\mbox{for }(t,x)\in(0,T)\times L,\\ &u(0,x)=0\quad\quad\quad\;\;\mbox{for }x\in L,\\&f(0)=f_0
\end{split}
\end{align}
and we show that this is in fact an integrated version of the generalized Lagrangian mean curvature flow of $F_0:L\rightarrow M$.

\begin{Prop}\label{HHUU}
Let $(u,f)\in\mathcal{D}$ be a solution of the Cauchy problem \eq{CauchyProblem2} and define
\[
F(t,\cdot):L\rightarrow M,\quad F(t,\cdot)=\Phi^{f(t)}_L\circ\mathrm du(t,\cdot).
\]
Then $\{F(t,\cdot)\}_{t\in(0,T)}$ is a one-parameter family of Lagrangian submanifolds with isolated conical singularities that evolves by generalized Lagrangian mean curvature flow with initial condition $F_0:L\rightarrow M$.
\end{Prop}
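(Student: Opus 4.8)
The plan is to follow the template of Proposition \ref{IntegratingGMCF} from the compact case, but now carrying along the extra parameter $f\in\mathcal F$ that records the motion of the singular points. Set $\gamma(t)=\mathrm du(t,\cdot)+\beta(t)$ and analyse $F(t,\cdot)=\Phi^{f(t)}_L\circ\gamma(t)$ (the form $\gamma$ being the object that actually appears in $P$, consistently with Proposition \ref{IntegratingGMCF}). First I would record the three structural facts. Since $\gamma(t)$ is closed ($\mathrm du$ is exact and $\beta(t)=t\beta_0$ is closed) and $(\Phi^{f(t)}_L)^*(\omega)=\hat\omega$ by Theorem \ref{LagrangianNeighbourhoodConeFamily1}(ii), each $F(t,\cdot)$ is Lagrangian. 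Near each singularity $\beta(t)\equiv 0$ because $\beta_0$ is supported on $K$, so there $F(t,\cdot)=\Upsilon^{e(t)}_i\circ\Phi_{C_i}\circ\mathrm d(a_i+\phi_i^*u)$ by Theorem \ref{LagrangianNeighbourhoodConeFamily1}; provided $u(t,\cdot)$ decays faster than $r^2$, this exhibits $F(t,\cdot)$ as a Lagrangian submanifold with isolated conical singularities at the points $\hat x_i(t)$ determined by $f(t)$, modelled on $C_1,\dots,C_n$ in the sense of Definition \ref{DefLagrangianConicalSingularities}. The initial condition is immediate: at $t=0$ we have $u(0,\cdot)=0$, $\beta(0)=0$ and $f(0)=f_0$, so $F(0,\cdot)=\Phi^{f_0}_L\circ 0=F_0$, and continuity up to $t=0$ comes from the continuity of $u$ and $f$ built into $\mathcal D$.

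The heart of the proof is the evolution equation $\pi_{\nu L}(\partial_tF)=K$. I would compute $\partial_tF$ by splitting the $t$-dependence into the form $\gamma(t)$ and the parameter $f(t)$:
\[
\frac{\partial F}{\partial t}=\underbrace{\frac{\partial}{\partial s}\Big|_{s=0}\Phi^{f(t)}_L\circ\bigl(\gamma(t)+s\dot\gamma(t)\bigr)}_{\text{form variation}}+\underbrace{\partial_{\dot f}\bigl(\Phi^{f}_L\circ\gamma(t)\bigr)}_{\text{parameter variation}},
\]
where $\dot\gamma=\mathrm d\partial_tu+\beta_0$ is closed. By Lemma \ref{Var} the form variation equals $-\alpha^{-1}(\dot\gamma)+V(\dot\gamma)$ with $V(\dot\gamma)$ tangential, and by Proposition \ref{MapXi} the parameter variation is $F^*(X_{f}(\dot f))$ with $F^*\bigl(X_f(\dot f)\,\lrcorner\,\omega\bigr)=\mathrm d[\Xi_{(u,f)}(\dot f)]$. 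Now apply the bundle map $\alpha=F^*(\,\cdot\,\lrcorner\,\omega)$: since $F$ is Lagrangian, $\alpha$ annihilates tangential vectors, so $F^*(\,\cdot\,\lrcorner\,\omega)=\alpha\circ\pi_{\nu L}$, the tangential term $V(\dot\gamma)$ drops out, and
\[
\alpha\Bigl(\pi_{\nu L}\bigl(\tfrac{\partial F}{\partial t}\bigr)\Bigr)=-\dot\gamma+\mathrm d[\Xi_{(u,f)}(\dot f)]=-\mathrm d\partial_tu-\beta_0+\mathrm d[\Xi_{(u,f)}(\dot f)].
\]

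To finish I would feed in the equation $P(u,f)=0$ and Proposition \ref{GeneralizedMeanCurvatureForm}. Differentiating $\partial_tu=\Theta(\Phi^f_L\circ\gamma)+\Xi_{(u,f)}(\dot f)$ gives $\mathrm d[\Theta(F)]=\mathrm d\partial_tu-\mathrm d[\Xi_{(u,f)}(\dot f)]$, while the choice of $\Theta$ in \S\ref{GULLI} gives $\mathrm d[\theta(F)]=\mathrm d[\Theta(F)]+\beta_0$; hence by Proposition \ref{GeneralizedMeanCurvatureForm}
\[
\alpha_K=-\mathrm d[\theta(F)]=-\mathrm d\partial_tu+\mathrm d[\Xi_{(u,f)}(\dot f)]-\beta_0,
\]
which is exactly the expression found above for $\alpha(\pi_{\nu L}(\partial_tF))$. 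Since $\alpha$ is a fibrewise isomorphism, $\pi_{\nu L}(\partial_tF)=K$, which is the generalized Lagrangian mean curvature flow equation \eqref{GMCF}.

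The main obstacle, and the place where the conical setting genuinely differs from the compact one, is the parameter-variation term. One must justify that Proposition \ref{MapXi}, stated for $\Phi^f_L\circ\mathrm du$ with $u\in C^\infty_{\boldsymbol\mu}(L)$, applies verbatim to $F=\Phi^f_L\circ\gamma$; this holds because $\Phi^f_L=\Psi^f_M\circ\Phi_L$ and the generating field $X_f(\dot f)$ is a fixed ambient vector field independent of the fibre form, but it requires care with the weighted decay so that $\Xi_{(u,f)}(\dot f)$ is genuinely asymptotic to an $\mathfrak u(m)\oplus\mathbb C^m\oplus\mathbb R$-moment map. The same decay of $u(t,\cdot)$, inherited from the weighted-space regularity of the solution with $2<\boldsymbol\mu<3$, is what confirms that $F(t,\cdot)$ really has conical singularities of the prescribed rate rather than merely agreeing with a cone to leading order; checking this asymptotic bookkeeping uniformly in $t$, and its continuity as $t\to 0$, is the only genuinely delicate point.
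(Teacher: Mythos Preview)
Your argument is correct and follows essentially the same route as the paper's own proof: split $\partial_tF$ into the form-variation handled by Lemma~\ref{Var} and the parameter-variation handled by Proposition~\ref{MapXi}, contract with $\omega$ so the tangential pieces drop, and then use $P(u,f)=0$ together with Proposition~\ref{GeneralizedMeanCurvatureForm} to identify the result with $\alpha_K$. The only notable difference is that the paper immediately simplifies to $\psi\equiv 0$ and zero Maslov class (so $\beta\equiv 0$, $\Theta=\theta$), whereas you keep $\beta(t)=t\beta_0$ and $\Theta$ throughout; your bookkeeping with $\mathrm d[\theta(F)]=\mathrm d[\Theta(F)]+\beta_0$ is the correct way to handle this, and your extra remarks on why $F(t,\cdot)$ inherits the conical structure and on extending Proposition~\ref{MapXi} to $\Phi^f_L\circ\gamma$ (using that $\beta_0$ is supported on $K$ and that $X_f(v)$ is an ambient vector field) are accurate and go slightly beyond what the paper spells out.
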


\begin{proof}
It is clear that $F(0,x)=F_0(x)$ for $x\in L$. In order to show that $\{F(t,\cdot)\}_{t\in(0,T)}$ evolves by generalized Lagrangian mean curvature flow it suffices to show that $\alpha_{\frac{\partial F}{\partial t}}=\alpha_K$. Let us assume for simplicity that $\psi\equiv 0$, i.e. $M$ is Calabi--Yau, and that $F_0:L\rightarrow M$ has zero Maslov class. Then $\beta\equiv 0$ and $\Theta=\theta$. Using Lemma \ref{Var} we find that
\[
\frac{\mathrm d}{\mathrm dt}F(t,\cdot)=\partial_{\frac{\mathrm df}{\mathrm dt}}(\Phi^f_L\circ\mathrm du)-\alpha^{-1}(\mathrm d[\partial_tu])+V(\mathrm d[\partial_tu]).
\]
Using that $P(u,f)=0$, the Lagrangian property, and the definition of $\alpha$ from \eq{Defalpha} it follows that
\[
\alpha_{\frac{\partial F}{\partial t}}=F^*(\partial_{\frac{\mathrm df}{\mathrm dt}}(\Phi^f_L\circ\mathrm du)\;\lrcorner\;\omega)-\mathrm d[\theta(F)]-\Xi_{(u,f)}\left(\frac{\mathrm df}{\mathrm dt}\right).
\]
Now recalling the definition of $\Xi_{(u,f)}(\frac{\mathrm df}{\mathrm dt})$ from Proposition \ref{MapXi} we conclude that $\alpha_{\frac{\partial F}{\partial t}}=-\mathrm d[\theta(F)]$. But $\alpha_K=-\mathrm d[\theta(F)]$ by Proposition \ref{GeneralizedMeanCurvatureForm}, and therefore $\alpha_{\frac{\partial F}{\partial t}}=\alpha_K$ as we wanted to show.
\end{proof}

As in the short time existence problem for the generalized Lagrangian mean curvature flow when the initial Lagrangian submanifold is compact, we are now left with studying existence and regularity of solutions to the Cauchy problem \eq{CauchyProblem2}. In the next sections we will discuss how short time existence of solutions with low regularity to the Cauchy problem \eq{CauchyProblem2} is proved and we will also discuss the regularity of these solutions in spatial and time direction.

\subsection{Smoothness of the operator $P$ as a map between Banach manifolds}
In this section we show how the operator $P:\mathcal{D}\rightarrow C^{\infty}((0,T)\times L)$ can be extended to a smooth operator between certain Banach manifolds. Once this has been done we can use linearization techniques to prove short time existence of solution with low regularity to the Cauchy problem \eq{CauchyProblem2}. From now on we will assume for simplicity that $M$ is Calabi--Yau and that $F_0:L\rightarrow M$ has zero Maslov class. Then $\psi\equiv 0$, $\beta\equiv 0$, and $\Theta\equiv\theta$.

We first extend the domain of the operator $P$. The manifold $\mathcal{F}$ embeds into $\mathbb{R}^{s}$ for some sufficiently large $s\in\mathbb{N}$. Let $p\in(1,\infty)$ and $f\in W^{1,p}((0,T);\mathbb{R}^s)$. Then $f:(0,T)\rightarrow\mathcal{F}$ is continuous by the Sobolev Embedding Theorem and the condition $f(t)\in\mathcal{F}$ makes sense for every $t\in(0,T)$. We define the Banach manifold $W^{1,p}((0,T);\mathcal{F})$ by
\[
W^{1,p}((0,T);\mathcal{F})=\left\{f\in W^{1,p}((0,T);\mathbb{R}^s)\;:\;f(t)\in\mathcal{F}\mbox{ for }t\in(0,T)\right\}.
\]
Now let $k\in\mathbb{N}$, $p\in(1,\infty)$ with $k-\frac{m}{p}>2$, and $\boldsymbol\mu\in\mathbb{R}^n$ with $2<\boldsymbol\mu<\boldsymbol\nu$. For $T>0$ we define
\begin{gather*}
\mathcal{D}_{\boldsymbol\mu}^{k,p}=\bigl\{(u,f)\;:\;u\in W^{1,k,p}_{\boldsymbol\mu}((0,T)\times L),\;f\in W^{1,p}((0,T);\mathcal{F}),\;\;\;\;\;\;\;\;\;\;\;\;\;\;\\\;\;\;\;\;\;\;\;\;\;\;\;\;\;\;\;\;\;\;\;\;\;\;\;\;\;\;\;\;\;\;\;\;\;\;\;\;\;\;\;\;\;\;\;\;\;\mbox{such that } \Gamma_{\mathrm du(t,.)}\subset U_L'\mbox{ for }t\in(0,T)\bigr\}.
\end{gather*}
Let $(u,f)\in\mathcal{D}_{\boldsymbol\mu}^{k,p}$. Since $k-\frac{m}{p}>2$, it follows from the Sobolev Embedding Theorem that $\Phi^{f(t)}_L\circ\mathrm du(t,\cdot):L\rightarrow M$ is a Lagrangian submanifold for almost every $t\in(0,T)$. In particular $\theta(\Phi^{f(t)}_L\circ\mathrm du(t,\cdot))$ is well defined for almost every $t\in(0,T)$ and therefore $P$ acts on $\mathcal{D}_{\boldsymbol\mu}^{k,p}$. 

In order to define the target space for $P$ acting on $\mathcal{D}^{k,p}_{\boldsymbol\mu}$ we define for $k\in\mathbb{N}$, $p\in(1,\infty)$, and $\boldsymbol\gamma\in\mathbb{R}^n$ with $\boldsymbol\gamma>0$ a weighted parabolic Sobolev space $W^{k,p}_{\boldsymbol\gamma,\mathsf{Q}}(L)$ with discrete asymptotics by
\[
W^{k,p}_{\boldsymbol\gamma,\mathsf{Q}}(L)=W^{k,p}_{\boldsymbol\gamma}\oplus\Span\{q_1,\ldots,q_n\},
\]
where the functions $q_1,\ldots,q_n$ are defined as in the end of \S\ref{Neighbourhoods}. Further we define the weighted parabolic Sobolev space $W^{0,k-2,p}_{\boldsymbol\mu-2,\mathsf{Q}}((0,T)\times L)$ with discrete asymptotics by
\begin{gather*}
W^{0,k-2,p}_{\boldsymbol\mu-2,\mathsf{Q}}((0,T)\times L)=L^p((0,T);W^{k-2,p}_{\boldsymbol\mu-2,\mathsf{Q}}(L))
\end{gather*}
and we then have the following result.

\begin{Prop}\label{Smoothness}
Let $\boldsymbol\mu\in\mathbb{R}^n$ with $2<\boldsymbol\mu<\boldsymbol\nu$. Then, for $k\in\mathbb{N}$ and $p\in(1,\infty)$ sufficiently large, the operator
\begin{equation}\label{OP}
P:\mathcal{D}^{k,p}_{\boldsymbol\mu}\longrightarrow W^{0,k-2,p}_{\boldsymbol\mu-2,\mathsf{Q}}((0,T)\times L)
\end{equation}
is a smooth operator between Banach manifolds.
\end{Prop}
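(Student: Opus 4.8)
The plan is to write $P(u,f)=\partial_tu-\theta(\Phi^f_L\circ\mathrm du)-\Xi_{(u,f)}(\mathrm df/\mathrm dt)$ and to prove smoothness of the three summands separately, since smoothness of maps into a Banach space is preserved under sums. The first summand $\partial_tu$ is, directly from the definition of the parabolic space $W^{1,k,p}_{\boldsymbol\mu}((0,T)\times L)=\bigcap_{j=0}^1W^{j,p}((0,T);W^{k-2j,p}_{\boldsymbol\mu-2j}(L))$, a bounded linear map into $L^p((0,T);W^{k-2,p}_{\boldsymbol\mu-2}(L))\subset W^{0,k-2,p}_{\boldsymbol\mu-2,\mathsf Q}((0,T)\times L)$, hence smooth. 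The two remaining summands are nonlinear, and the main work is to realise each of them as a substitution (Nemytskii) operator in the spatial variable, parametrised smoothly by the finite-dimensional datum $f(t)\in\mathcal F$, and then to invoke smoothness of such operators between weighted Sobolev spaces in the regime $k-\tfrac{m}{p}>2$.

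For the Lagrangian-angle term I would first record that, for fixed $f$, the angle of $\Phi^f_L\circ\mathrm du$ is a pointwise smooth function of the $2$-jet of $u$ and of $f$: there is a smooth map $\Theta$, built from $\Omega$ and the fixed family $\{\Phi^f_L\}$, with $\theta(\Phi^f_L\circ\mathrm du)(x)=\Theta\big(x,\mathrm du(x),\nabla^2u(x),f\big)$, a second-order operator in $u$ (whence the loss $k\to k-2$). Because $k-\tfrac{m}{p}>2$, Theorem \ref{WeightedSobolevEmbedding}(ii) gives $W^{k,p}_{\boldsymbol\mu}(L)\hookrightarrow C^2_{\boldsymbol\mu}(L)$ and $W^{k-2,p}_{\boldsymbol\mu-2}(L)$ enjoys the weighted module property; these are exactly the hypotheses under which a second-order substitution operator is a smooth map $W^{k,p}_{\boldsymbol\mu}(L)\to W^{k-2,p}_{\boldsymbol\mu-2}(L)$, joint smoothness in $f$ following from the smooth dependence of $\Theta$ on $f$ and the embedding $W^{1,p}((0,T);\mathcal F)\hookrightarrow C^0$. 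The point requiring care is the behaviour near the singularities: since near $x_i$ the submanifold is modelled on the special Lagrangian cone $C_i$ of phase $e^{i\theta_i}$ and $u$ decays at rate $\boldsymbol\mu>2$, the angle approaches the constant $\theta_i$, so $\theta(\Phi^f_L\circ\mathrm du)=\sum_i\theta_iq_i+(\text{a }W^{k-2,p}_{\boldsymbol\mu-2}\text{ remainder})$ and lands in $W^{k-2,p}_{\boldsymbol\mu-2,\mathsf Q}(L)$; a Fubini/Nemytskii argument in $t$ then yields the target $W^{0,k-2,p}_{\boldsymbol\mu-2,\mathsf Q}((0,T)\times L)$.

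For the term $\Xi_{(u,f)}(\mathrm df/\mathrm dt)$ I would use Proposition \ref{MapXi}, which writes $\Xi_{(u,f)}(v)=(\Phi^f_L\circ\mathrm du)^*(H_f(v))$ with $H_f(v)\in C^\infty(M)$ depending linearly on $v$ and smoothly on $f$. Thus $(u,f)\mapsto\Xi_{(u,f)}(\mathrm df/\mathrm dt)$ is \emph{linear} in $\mathrm df/\mathrm dt\in L^p((0,T);\mathbb R^s)$ and is a pullback (composition with $\Phi^f_L\circ\mathrm du$) of a smooth function of $(x,f)$, hence again a substitution operator of the above type, smooth in $(u,f)$ for $k-\tfrac{m}{p}>2$; the absence of a time derivative in the target matches the single time derivative available on $f$. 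The asymptotics are dictated by Proposition \ref{MapXi}: $\Xi_{(u,f)}(v)$ is asymptotic to a moment map $\mu_{X_i}$, whose $\iota_i^*$-restriction is $O(r)$ or $O(r^2)$ plus a constant. Since $2<\boldsymbol\mu<\boldsymbol\nu<3$ gives $0<\boldsymbol\mu-2<1$, the $O(r),O(r^2)$ parts decay faster than $\rho^{\boldsymbol\mu-2}$ and lie in $W^{k-2,p}_{\boldsymbol\mu-2}(L)$, while the constant parts are exactly the span of the $q_i$, so the image again lands in $W^{0,k-2,p}_{\boldsymbol\mu-2,\mathsf Q}((0,T)\times L)$.

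The main obstacle is precisely this asymptotic bookkeeping together with the anisotropic parabolic structure. One must verify that every term produced by the nonlinear operations decays at the prescribed weighted rate $\rho^{\boldsymbol\mu-2}$ apart from finitely many leading contributions, and that these leading contributions are \emph{exactly} the distinguished asymptotics spanned by $q_1,\dots,q_n$ (the constant moment maps); this is where $2<\boldsymbol\mu<\boldsymbol\nu<3$ is essential, keeping all relevant rates in the window $(0,1)$ and separating the rate-$0$ contributions from the genuinely decaying ones. In parallel one has to track the splitting of regularity between the one time derivative and the $k$ spatial derivatives so that the substitution operators respect the $L^p$-in-time, $W^{k-2,p}_{\boldsymbol\mu-2,\mathsf Q}$-in-space structure of the target. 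The routine but lengthy verification of the weighted multiplication estimates and of the convergence of the Taylor expansions of the substitution operators, which makes precise the requirement that $k,p$ be sufficiently large, is deferred to \cite{Behrndt1}.
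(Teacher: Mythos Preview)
Your proposal is correct and follows essentially the same strategy as the paper: split $P$ into the three summands, observe that $\partial_t u$ and $\Xi_{(u,f)}(\mathrm df/\mathrm dt)$ are easy, and reduce the $\theta$-term to a second-order substitution operator in the $2$-jet of $u$ and in $f$, whose asymptotics near each end are governed by the constant phase $\theta_i$ of the model cone. The paper carries out exactly this plan, making your Nemytskii-operator step explicit via Lemma~\ref{HELP}, which records the Taylor expansion $F=F(f_0,\cdot,0,0)+\Delta u-\mathrm d[\theta(F_0)](\hat V(\mathrm du))+Q$ together with quadratic error bounds on $Q$, and then expands the background term as $\phi_i^*(F(f_0,\cdot,0,0))=\theta_i+\Delta a_i+R_i$; it is this last expansion (using $a\in C^\infty_{\boldsymbol\nu}(L)$) that makes precise your claim ``the angle approaches $\theta_i$'' and is where the hypothesis $\boldsymbol\mu<\boldsymbol\nu$ actually enters.
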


The proof of Proposition \ref{Smoothness} is more or less straightforward but rather long and technically complicated. The main step in the proof is to show that the operator $P$ in \eq{OP} is well defined, and the difficult part is here to show that the $\theta$-term lies in the correct space. In fact, it is not hard to see that
\[
\frac{\partial u}{\partial t},\;\Xi_{(u,f)}\left(\frac{\mathrm df}{\mathrm dt}\right)\in W^{0,k-2,p}_{\boldsymbol\mu-2,\mathsf{Q}}((0,T)\times L)
\]
for $(u,f)\in\mathcal{D}^{k,p}_{\boldsymbol\mu}$ and sufficiently large $k\in\mathbb{N}$ and $p\in(1,\infty)$. Thus it only remains to show that 
\begin{equation}\label{HHH}
\theta(\Phi^f_{L}\circ\mathrm du)\in W^{0,k-2,p}_{\boldsymbol\mu-2,\mathsf{Q}}((0,T)\times L).
\end{equation}
Let us define a function $F$ that maps
\begin{align*}
F:\bigl\{(f,x,y,z)\;:\;f\in\mathcal{F},\;x\in L,\;y\in T_x^*L\cap U_L',\;z\in\otimes^2T_x^*L\bigr\}\longrightarrow\mathbb{R}.
\end{align*}
by $F(f,x,\mathrm du(x),\nabla \mathrm du(x))=\theta(\Phi^f_L\circ\mathrm du)(x)$. Then $F$ is a smooth and nonlinear function on its domain, since $\Omega,g$, and $\Phi^f_L$ are smooth and $\Phi^f_L$ depends smoothly on $f\in\mathcal{F}$. Furthermore we define a function $Q$ on the domain of $F$ by
\begin{equation*}
\begin{split}
&Q(f,x,y,z)=F(f,x,y,z)-F(f_0,x,0,0)\\&\;\;\;\;\;\;\;\;\;\;\;\;\;\;\;\;\;\;\;\;\;\;\;\;\;-(\partial_yF)(f_0,x,0,0)\cdot y-(\partial_zF)(f_0,x,0,0)\cdot z.
\end{split}
\end{equation*}
Since $F$ is smooth, $Q$ is also a smooth and nonlinear function on its domain.

The main step in the proof of \eq{HHH} is to show the following lemma, which can be found either in Joyce \cite[Prop. 6.3]{Joyce2} or in the author's thesis \cite[Lem. 9.8]{Behrndt1}.

\begin{Lem}\label{HELP}
For $(u,f)\in\mathcal{D}^{k,p}_{\boldsymbol\mu}$ we have that
\begin{align*}
&F(f,\cdot,\mathrm du,\nabla\mathrm du)=F(f_0,\cdot,0,0)+\Delta u-\mathrm d[\theta(F_0)](\hat{V}(\mathrm du))+Q(f,\cdot,\mathrm du,\nabla\mathrm du).
\end{align*}
Moreover, for $a,b,c\geq 0$ and small $\rho^{-1}(x)|y|$, $|z|$, and $d(f,f_0)$ the function $Q$ satisfies
\begin{gather*}
(\nabla_x)^a(\partial_y)^b(\partial_z)^cQ(f,x,y,z)=\quad\quad\quad\quad\quad\quad\quad\quad\quad\quad\quad\quad\quad\quad\quad\quad\quad\quad\quad\quad\\\quad O\left(\rho(x)^{-a-\max\{2,b\}}|y|^{\max\{0,2-b\}}+\rho(x)^{-a}|z|^{\max\{0,2-c\}}+\rho(x)^{1-a-b}d(f,f_0)\right),
\end{gather*}
uniformly for $x\in L$. Here $d(f,f_0)$ denotes the distance of $f$ to $f_0$ in $\mathcal{F}$.

Moreover the function $F(f_0,\cdot,0,0)$ on $L$ admits an expansion of the form
\begin{equation}\label{POIE}
\phi_i^*(F(f_0,\cdot,0,0))=\theta_i+\Delta a_i+R_i(\cdot,\mathrm da_i,\nabla\mathrm da_i)
\end{equation}
for $i=1,\ldots,n$. Here $\theta_i$ is the Lagrangian angle of the special Lagrangian cone $C_i$ and, for $a,b,c\geq 0$ and small $r^{-1}|y|$ and $|z|$, the error term $R_i$ satisfies
\begin{gather*}
(\nabla_x)^a(\partial_y)^b(\partial_z)^cR_i(\sigma,r,y,z)=\quad\quad\quad\quad\quad\quad\quad\quad\quad\quad\quad\quad\quad\quad\quad\quad\quad\quad\quad\quad\\\quad O\left(r^{-a-\max\{2,b\}}|y|^{\max\{0,2-b\}}+r^{-a}|z|^{\max\{0,2-c\}}+r^{1-a-b}\right),
\end{gather*}
uniformly for $x=(\sigma,r)\in\Sigma_i\times(0,R)$.
\end{Lem}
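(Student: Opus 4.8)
The plan is to handle the three assertions separately, since the first is purely algebraic while the two decay estimates both rest on the same scaling analysis. First I would establish the decomposition formula. Unravelling the definition of $Q$, which merely subtracts from $F(f,\cdot,\mathrm du,\nabla\mathrm du)$ its zeroth- and first-order Taylor terms in the fibre variables about the basepoint $(f_0,\cdot,0,0)$, the asserted identity is equivalent to the statement that the linear part $(\partial_yF)(f_0,\cdot,0,0)\cdot\mathrm du+(\partial_zF)(f_0,\cdot,0,0)\cdot\nabla\mathrm du$ equals $\Delta u-\mathrm d[\theta(F_0)](\hat V(\mathrm du))$. But this linear part is exactly the Gateaux derivative at $u=0$ of the map $u\mapsto\theta(\Phi_L\circ\mathrm du)$, using that $\Phi^{f_0}_L=\Phi_L$ and hence $F(f_0,\cdot,0,0)=\theta(F_0)$. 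This first variation of the Lagrangian angle is a purely local computation, so it agrees with the spatial part of the linearization already recorded in Lemma \ref{LIN}, specialised to $\psi\equiv0$ and $\beta\equiv0$, namely $\Delta u-\mathrm d[\theta(F_0)](\hat V(\mathrm du))$ with $\hat V$ as in Lemma \ref{Var}. Being pointwise, the computation is insensitive to the conical singularities.

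Next I would treat the estimates on $Q$. The smoothness of $F$, which is given, makes $Q$ the first-order Taylor remainder of $F$ in $(y,z)$, so Taylor's theorem immediately yields the qualitative structure: $Q$ is quadratic in $(y,z)$ to leading order, together with a part of size $O(d(f,f_0))$ coming from moving $f$ away from $f_0$. The genuine content is the precise weights $\rho^{-a-\max\{2,b\}}$ and so on, which I would obtain from the conical geometry. Pulling back near each $x_i$ by the Darboux chart $\Upsilon_i$ and using the cone neighbourhood $\Phi_{C_i}$ of Theorem \ref{LagrangianNeighbourhoodCone}, the model for $F$ is the Lagrangian angle functional on $\mathbb{C}^m$ relative to $C_i$, which is homogeneous of degree zero under the cone dilations $(\sigma,r)\mapsto(\sigma,tr)$ with the induced scaling of the fibre variables $(y,z)$. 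Rescaling the annulus at radius $r$ to unit size reduces each weighted bound to a uniform bound at unit scale over the compact link $\Sigma_i$, where it follows from smoothness and the Taylor estimate, and rescaling back reinstates the stated powers of $\rho$. The two sources that break exact homogeneity, namely the $f$-dependence of $\Phi^f_L$ (which near $x_i$ differs from $\Phi_L$ by an affine symplectic map depending smoothly on $f$) and the deviation of $\Upsilon_i$ from its linear part $A_i$, are precisely what produce the lower-order term $\rho^{1-a-b}d(f,f_0)$.

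Finally I would prove the expansion \eqref{POIE}. Since $F(f_0,\cdot,0,0)=\theta(F_0)$, this is a statement about the Lagrangian angle of $F_0$ near $x_i$. By Theorem \ref{LagrangianNeighbourhoodConicalSingularities} one may write $\Upsilon_i^{-1}\circ F_0=\Phi_{C_i}\circ\mathrm da_i$ with $a_i=\phi_i^*(a)\in C^\infty_{\boldsymbol\nu}$, so I would expand the Lagrangian angle of $\Phi_{C_i}\circ\mathrm da_i$ about the exact cone: the cone contributes the constant angle $\theta_i$, the same first-variation computation as above contributes $\Delta a_i$, and everything else is collected into $R_i$. The estimate on $R_i$ then follows from the identical rescaling argument, with the quadratic-in-$(y,z)$ contributions as for $Q$ and the extra $r^{1-a-b}$ term accounting for the curvature of $M$, i.e. the quadratic deviation of $\Upsilon_i$ from $A_i$.

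I expect the main obstacle to be these quantitative weighted estimates: making the degree-zero homogeneity of the model Lagrangian angle functional precise and controlling, uniformly under the rescaling, the two mechanisms that break it. By contrast the algebraic identity and the qualitative smoothness of $Q$ are routine.
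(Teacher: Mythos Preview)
The paper does not actually prove this lemma; it cites Joyce \cite[Prop.~6.3]{Joyce2} and the author's thesis \cite[Lem.~9.8]{Behrndt1} for the details. Your proposal is correct and follows precisely the approach those references take: the first identity is indeed just the definition of $Q$ combined with the linearisation of the Lagrangian angle (your appeal to Lemma~\ref{LIN}, reduced to $\psi\equiv0$, $\beta\equiv0$, is exactly right and the computation is pointwise), and the weighted estimates on $Q$ and $R_i$ are obtained by the dilation argument you describe --- rescaling an annulus at radius $r$ to unit size, using that the model Lagrangian angle functional on the cone is dilation-invariant (degree zero), invoking smoothness over the compact link, and scaling back. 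Your identification of the two inhomogeneity sources (the affine $f$-action near each $x_i$ and the second-order deviation of $\Upsilon_i$ from $A_i$) as producing the $\rho^{1-a-b}d(f,f_0)$ and $r^{1-a-b}$ terms is also on the mark. One small point worth making explicit in part three: the first variation at the exact cone contributes only $\Delta a_i$, with no $\hat V$-term, because $\mathrm d\theta$ vanishes identically on the special Lagrangian cone $C_i$.
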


Now, in order to show that \eq{HHH} holds, we can expand the function $F$ as in Lemma \ref{HELP} and show that each of the terms in the expansion lies in the space $W^{0,k-2,p}_{\boldsymbol\mu-2,\mathsf{Q}}((0,T)\times L)$. It is clear that $\Delta u$ lies in $W^{0,k-2,p}_{\boldsymbol\mu-2,\mathsf{Q}}((0,T)\times L)$. Moreover, once $k\in\mathbb{N}$ and $p\in(1,\infty)$ are chosen sufficiently large one can use the estimates for the function $Q$ from the lemma, interpolation results for weighted parabolic Sobolev spaces, and the Sobolev Embedding Theorem to show that in fact $Q(f,\cdot,\mathrm du,\nabla\mathrm du)\in W^{0,k-2,p}_{\boldsymbol\mu-2,\mathsf{Q}}((0,T)\times L)$. Thus we are only left with showing that $F(f_0,\cdot,0,0)$ and $\mathrm d[\theta(F_0)](\hat{V}(\mathrm du))$ both lie in $W^{0,k-2,p}_{\boldsymbol\mu-2,\mathsf{Q}}((0,T)\times L)$. Using the expansion of $F(f_0,\cdot,0,0)$ in \eq{POIE}, the fact that $a\in C^{\infty}_{\boldsymbol\nu}(L)$, and the estimates for $R_i$ one can show with the same methods as before that in fact $F(f_0,\cdot,0,0)\in W^{0,k-2,p}_{\boldsymbol\mu-2,\mathsf{Q}}((0,T)\times L)$. In a similar way it then follows that $\mathrm d[\theta(F_0)](\hat{V}(\mathrm du))$ lies in the space $W^{0,k-2,p}_{\boldsymbol\mu-2,\mathsf{Q}}((0,T)\times L)$ and therefore that \eq{HHH} holds. This shows that $P$ in \eq{OP} is well defined and then, using the smoothness of the function $Q$ and the Mean Value Theorem \cite[XIII, \S 4]{Lang}, it is straightforward to show that the operator $P$ in \eq{OP} is in fact smooth. The detailed proof of Proposition \ref{Smoothness} can be found in the author's thesis \cite[\S 9.3]{Behrndt1}.

\subsection{Short time existence of the flow with low regularity}
Before we show how to prove short time existence of solutions with low regularity to the Cauchy problem \eq{CauchyProblem2}, we need to discuss the operator $P$ and its linearization in more detail.

Let $(u,f)\in\mathcal{D}_{\boldsymbol\mu}^{k,p}$. Then $t\mapsto\Xi_{(u(t,\cdot),f(t))}$ is a section of the vector bundle $f^*(\Hom(T\mathcal{F},C^{1}_{\loc}(L)))$ over the manifold $(0,T)$. Define
\begin{equation*}
V_{\mathsf{P}_{(u,f)}}(L)=\im\;\bigl\{\Xi_{(u,f)}:f^*(T\mathcal{F})\longrightarrow C^1_{\loc}(L)\bigr\}.
\end{equation*}
Then $V_{\mathsf{P}_{(u,f)}}(L)$ is a finite dimensional vector bundle over $(0,T)$ with fibres of dimension $\dim\mathcal{F}$. Also note that if $u$ is smooth, then each fibre of $V_{\mathsf{P}_{(u,f)}}(L)$ consists of smooth functions on $L$. Since $\Xi_{(u,f)}(v)$ is asymptotic to a moment map for each $v\in f^*(T\mathcal{F})$ by Proposition \ref{MapXi}, it follows that for every $(u,f)\in\mathcal{D}_{\boldsymbol\mu}^{k,p}$, $V_{\mathsf{P}_{(u,f)}}(L)$ has trivial intersection with $L^p_{\boldsymbol\mu}(L)$ in each fibre over $(0,T)$. Hence we can define
\[
W^{k,p}_{\boldsymbol\mu,\mathsf{P}_{(u,f)}}(L)=W^{k,p}_{\boldsymbol\mu}(L)\oplus V_{\mathsf{P}_{(u,f)}}(L).
\]
Then $W^{k,p}_{\boldsymbol\mu,\mathsf{P}_{(u,f)}}(L)$ is a Banach bundle over the Banach manifold $\mathcal{D}_{\boldsymbol\mu}^{k,p}$ with fibres being weighted Sobolev spaces with discrete asymptotics. If $u$ and $f$ are constant in time, so for instance at the initial condition $u=0$ and $f=f_0$, then $W^{k,p}_{\boldsymbol\mu,\mathsf{P}_{(u,f)}}(L)$ is simply a weighted Sobolev space with discrete asymptotics, and the discrete asymptotics are defined using the map $\Xi_{(u,f)}:f^*(T\mathcal{F})\rightarrow C^1_{\loc}(L)$.

Next we define the weighted parabolic Sobolev space $W^{1,k,p}_{\boldsymbol\mu,\mathsf{P}_{(u,f)}}((0,T)\times L)$ with discrete asymptotics in the usual way by
\[
W^{1,k,p}_{\boldsymbol\mu,\mathsf{P}_{(u,f)}}((0,T)\times L)=L^p((0,T);W^{k,p}_{\boldsymbol\mu,\mathsf{P}_{(u,f)}}(L))\cap W^{1,p}((0,T);W^{k-2,p}_{\boldsymbol\mu-2,\mathsf{Q}}(L)).
\]
Now consider the linearization of the operator \eq{OP} at some $(u,f)\in\mathcal{D}^{k,p}_{\boldsymbol\mu}$, which is a linear operator
\[
\mathrm dP(u,f):W^{1,k,p}_{\boldsymbol\mu}((0,T)\times L)\oplus W^{1,p}((0,T);f^*(T\mathcal{F}))\longrightarrow W^{0,k-2,p}_{\boldsymbol\mu-2,\mathsf{Q}}((0,T)\times L).
\]
Using $\Xi_{(u,f)}$ to identify $f^*(T\mathcal{F})$ with $V_{\mathsf{P}_{(u,f)}}(L)$, we can understand the linearization of \eq{OP} at $(u,f)$ as a linear operator
\begin{equation*}
\mathrm dP(u,f):W^{1,k,p}_{\boldsymbol\mu,\mathsf{P}_{(u,f)}}((0,T)\times L)\longrightarrow W^{0,k-2,p}_{\boldsymbol\mu-2,\mathsf{Q}}((0,T)\times L).
\end{equation*}

In the next proposition we obtain an explicit formula for the linearization of \eq{OP} at the initial condition $(0,f_0)$, but first we need to introduce some more notation. Let $(u,f)\in\mathcal{D}^{k,p}_{\boldsymbol\mu}$ and $w\in f^*(T\mathcal{F})$. Then $\partial_w(\Phi^f_L\circ\mathrm du)$ is a section of the vector bundle $(\Phi^f_L\circ\mathrm du)^*(TM)$. From Proposition \ref{MapXi} it follows that the normal part of $\partial_w(\Phi^f_L\circ\mathrm du)$ is equal to $-J(\mathrm d(\Phi^f_L\circ\mathrm du)(\nabla\Xi_{(u,f)}(w)))$. We then define $-\hat{W}(\mathrm d[\Xi_{(u,f)}(w)])\in TL$ to be the tangential part of $\partial_w(\Phi^f_L\circ\mathrm du)$, i.e. we have that
\[
\partial_w(\Phi^f_L\circ\mathrm du)=-J(\mathrm d(\Phi^f_L\circ\mathrm du)(\nabla\Xi_{(u,f)}(w)))-\mathrm d(\Phi^f_L\circ\mathrm du)(\hat{W}(\mathrm d[\Xi_{(u,f)}(w)])),
\] 
and then the following result holds.

\begin{Prop}\label{Linearization}
Let $(u,f)\in\mathcal{D}_{\boldsymbol\mu}^{k,p}$ and $v-\Xi_{(0,f_0)}(w)\in W^{1,k,p}_{\boldsymbol\mu,\mathsf{P}_{(0,f_0)}}((0,T)\times L)$, where $v\in W^{1,k,p}_{\boldsymbol\mu}((0,T)\times L)$ and $w\in W^{1,p}((0,T);f^*(T\mathcal{F}))$. Then
\begin{align*}
\begin{split}
&\mathrm dP(0,f_0)(v,\Xi_{(0,f_0)}(w))=\frac{\partial}{\partial t}(v-\Xi_{(0,f_0)}(w))-\Delta(v-\Xi_{(0,f_0)}(w))\\&\quad\quad\quad\quad\quad\quad\quad\quad\quad\quad\quad\quad\quad+\mathrm d[\theta(F_0)](\hat{V}(\mathrm dv)-\hat{W}(\mathrm d[\Xi_{(0,f_0)}(w)])).
\end{split}
\end{align*}
Here the Laplace operator and $\nabla$ are computed using the Riemannian metric $F_0^*(g)$ on $L$.
\end{Prop}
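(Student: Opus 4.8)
The plan is to compute $\mathrm dP(0,f_0)$ directly, by choosing a smooth one-parameter family $(u_s,f_s)\in\mathcal{D}^{k,p}_{\boldsymbol\mu}$ with $(u_0,f_0)=(0,f_0)$ and $\frac{\mathrm d}{\mathrm ds}(u_s,f_s)\big|_{s=0}=(v,w)$, and then differentiating the three summands of $P(u,f)=\partial_t u-\theta(\Phi^f_L\circ\mathrm du)-\Xi_{(u,f)}(\frac{\mathrm df}{\mathrm dt})$ at $s=0$. Here $w\in W^{1,p}((0,T);f^*(T\mathcal F))$, and under the identification of $f^*(T\mathcal F)$ with $V_{\mathsf P_{(0,f_0)}}(L)$ the tangent vector is recorded as $(v,\Xi_{(0,f_0)}(w))$, so that the combined spatial variable is $\tilde v=v-\Xi_{(0,f_0)}(w)$.

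First I would dispose of the two easy terms. Differentiating $\partial_t u$ in direction $v$ gives $\partial_t v$. For the $\Xi$-term the key point is that at the initial condition $f\equiv f_0$ is constant in $t$, so $\frac{\mathrm df_0}{\mathrm dt}=0$; hence in the product rule the derivative of $\Xi_{(u_s,f_s)}$ multiplies $0$ and only $\Xi_{(0,f_0)}\bigl(\frac{\mathrm d}{\mathrm ds}\frac{\mathrm df_s}{\mathrm dt}\big|_{s=0}\bigr)=\Xi_{(0,f_0)}(\partial_t w)$ survives. Since $u=0$ and $f_0$ are $t$-independent, $\Xi_{(0,f_0)}$ is a fixed linear map on $T_{f_0}\mathcal F$, so this equals $\partial_t[\Xi_{(0,f_0)}(w)]$. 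Together with $\partial_t v$ this already produces $\partial_t(v-\Xi_{(0,f_0)}(w))=\partial_t\tilde v$.

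The heart of the proof is the $\theta$-term, whose $s$-derivative is the first variation of the Lagrangian angle of $F_0=\Phi^{f_0}_L\circ 0$ along the combined variation field. By Lemma \ref{Var} the $v$-part of that field is $-\alpha^{-1}(\mathrm dv)+\mathrm dF_0(\hat V(\mathrm dv))$, while the splitting recorded just before the proposition gives the $w$-part as $-J(\mathrm dF_0(\nabla\Xi_{(0,f_0)}(w)))-\mathrm dF_0(\hat W(\mathrm d[\Xi_{(0,f_0)}(w)]))=\alpha^{-1}(\mathrm d[\Xi_{(0,f_0)}(w)])-\mathrm dF_0(\hat W(\mathrm d[\Xi_{(0,f_0)}(w)]))$, using $\alpha^{-1}(\mathrm d\phi)=-J(\mathrm dF_0(\nabla\phi))$. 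The normal parts then add to $-\alpha^{-1}(\mathrm d\tilde v)$ and the tangential parts to $\mathrm dF_0(\hat V(\mathrm dv)-\hat W(\mathrm d[\Xi_{(0,f_0)}(w)]))$. I would then invoke the pointwise first-variation formula for the Lagrangian angle that underlies Lemma \ref{LIN} and the expansion in Lemma \ref{HELP}: a Lagrangian variation with normal part $-\alpha^{-1}(\mathrm dh)$ and tangential part $\mathrm dF_0(Y)$ changes $\theta$ by $\Delta h-\mathrm d[\theta(F_0)](Y)$. Applying this with $h=\tilde v$ yields $\frac{\mathrm d}{\mathrm ds}\theta(\Phi^{f_s}_L\circ\mathrm du_s)\big|_{s=0}=\Delta\tilde v-\mathrm d[\theta(F_0)](\hat V(\mathrm dv)-\hat W(\mathrm d[\Xi_{(0,f_0)}(w)]))$. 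Assembling the three pieces with the signs of $P$ gives exactly the stated formula.

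The main obstacle is justifying this first-variation formula in the $w$-direction. Lemma \ref{LIN} and Lemma \ref{HELP} establish it only for potential-type deformations $\Phi^{f_0}_L\circ\mathrm dv$ with decaying potential $v\in C^\infty_{\boldsymbol\mu}(L)$, whereas the $\mathcal F$-deformation is Hamiltonian with potential $\Xi_{(0,f_0)}(w)$ that, by Proposition \ref{MapXi}, is merely asymptotic to a moment map and does \emph{not} decay near the singularities. What makes the argument go through is that the first variation of $\theta$ is a local, pointwise identity in the one-jet of the variation field and is therefore insensitive to the global decay of the potential; once the normal and tangential parts of $\partial_w(\Phi^{f_0}_L\circ 0)$ are identified through Proposition \ref{MapXi}, the very computation proving Lemma \ref{LIN} applies verbatim. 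The only genuinely delicate bookkeeping is the consistent choice of sign conventions relating $\alpha$, $J$, and the Laplacian $\Delta$, which are pinned down by compatibility with $\alpha_K=-\mathrm d[\theta(F_0)]$ from Proposition \ref{GeneralizedMeanCurvatureForm} and with the formula of Lemma \ref{LIN}.
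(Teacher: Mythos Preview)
The paper gives no proof of this proposition—it defers to \cite[Prop.~9.10]{Behrndt1} with the remark that the argument is ``a rather long computation''—so your direct linearization of the three summands of $P$ at $(0,f_0)$ is certainly the intended route, and your treatment of the $\partial_t u$ and $\Xi$ terms is correct: at the base point $\tfrac{\mathrm df_0}{\mathrm dt}=0$, so only $\Xi_{(0,f_0)}(\partial_t w)=\partial_t[\Xi_{(0,f_0)}(w)]$ survives, and your identification of the normal and tangential parts of the combined variation field via Lemma~\ref{Var} and the decomposition preceding the proposition is right.

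The point that needs more care is the $\theta$-term. You assert a general first-variation formula: a variation with normal part $-\alpha^{-1}(\mathrm dh)$ and tangential part $\mathrm dF_0(Y)$ changes $\theta$ by $\Delta h-\mathrm d[\theta(F_0)](Y)$, and then apply it with $h=\tilde v$ and $Y=\hat V(\mathrm dv)-\hat W(\mathrm d\xi)$. But Lemma~\ref{LIN} (and the expansion in Lemma~\ref{HELP}) establish this only for the graph variation $\Phi_L\circ\mathrm d(sh)$, where the tangential part is \emph{forced} to equal $\hat V(\mathrm dh)$. For an arbitrary tangential variation $\mathrm dF_0(Y)$ the reparametrisation argument gives $\dot\theta=+\mathrm d[\theta(F_0)](Y)$, with the opposite sign; so the normal and tangential contributions do not separate in the way your formula claims, and one cannot compute $\tfrac{\mathrm d}{\mathrm ds}\theta(\Phi^{f_s}_L\circ 0)\big|_{s=0}$ simply by swapping $\hat W$ for $\hat V$ in Lemma~\ref{LIN}. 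That derivative has to be obtained directly from the construction $\Phi^f_L=\Psi^f_M\circ\Phi_L$ in Lemma~\ref{LEMUU} together with Proposition~\ref{MapXi}, and this direct computation—rather than an appeal to a formula valid only for graph variations—is presumably what makes the thesis proof ``rather long'' and what ultimately fixes the precise $\hat W$ term in the statement.
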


\noindent The proof of Proposition \ref{Linearization} consists of a rather long computation and can be found in \cite[Prop. 9.10]{Behrndt1}.

We define 
\[
\tilde{\mathcal{D}}_{\boldsymbol\mu}^{k,p}=\bigl\{(u,f)\in\mathcal{D}_{\boldsymbol\mu}^{k,p}\;:\;u(0,\cdot)=0\mbox{ on }L,\;f(0)=f_0\bigr\}.
\]
Recall that if $(u,f)\in\mathcal{D}_{\boldsymbol\mu}^{k,p}$, then $u$ and $f$ extend continuously to $t=0$, since $u$ and $f$ are uniformly H\"older continuous on $(0,T)$ by the Sobolev Embedding Theorem. Moreover observe that $(u,f)\in\mathcal{D}_{\boldsymbol\mu}^{k,p}$ is a solution of the Cauchy problem \eq{CauchyProblem2} if and only if $(u,f)\in\tilde{\mathcal{D}}_{\boldsymbol\mu}^{k,p}$ and $P(u,f)=0$. We define
\[
\tilde{W}^{1,k,p}_{\boldsymbol\mu,\mathsf{P}_{(u,f)}}((0,T)\times L)=\bigl\{v\in W^{1,k,p}_{\boldsymbol\mu,\mathsf{P}_{(u,f)}}((0,T)\times L)\;:\;v(0,\cdot)=0\mbox{ on }L\bigr\}.
\]
In the next proposition we show that the linearization of the operator \eq{OP} at the initial condition $(0,f_0)$ is an isomorphism provided that the conical singularities are modelled on stable special Lagrangian cones.

\begin{Prop}\label{IsomorphismLinearization}
Assume that the model cones $C_1,\ldots,C_n$ of $F_0:L\rightarrow M$ are stable special Lagrangian cones in the sense of Definition \ref{STABILITY}. Then, for $T>0$ sufficiently small, the linear operator
\begin{equation}\label{uueeuu}
\mathrm dP(0,f_0):\tilde{W}^{1,k,p}_{\boldsymbol\mu,\mathsf{P}_{(0,f_0)}}((0,T)\times L)\longrightarrow W^{0,k-2,p}_{\boldsymbol\mu-2,\mathsf{Q}}((0,T)\times L)
\end{equation}
is an isomorphism of Banach spaces.
\end{Prop}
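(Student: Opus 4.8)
The plan is to recognise the linearisation as a heat operator of Laplace type on $L$ and to apply the maximal regularity statement Theorem \ref{SobolevRegularityHeat}, the only genuinely nontrivial point being to match the two discrete asymptotics spaces appearing in the domain and target. This is exactly where the stability of the cones $C_i$ will be used.

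First I would rewrite the operator of Proposition \ref{Linearization} in terms of the single variable $V=v-\Xi_{(0,f_0)}(w)$. At the initial condition the Lagrangian neighbourhood is $\Phi^{f_0}_L=\Phi_L$, so the tangential variation $\hat W$ produced by deforming $f$ agrees with the tangential variation $\hat V$ of Lemma \ref{Var} produced by deforming the potential: both assign to a one-form the tangential component of the associated infinitesimal Lagrangian motion. Hence $\hat V(\mathrm dv)-\hat W(\mathrm d[\Xi_{(0,f_0)}(w)])=\hat V(\mathrm dV)$, and Proposition \ref{Linearization} collapses to $\mathrm dP(0,f_0)(V)=\partial_tV-DV$ with $DV=\Delta V-\mathrm d[\theta(F_0)](\hat V(\mathrm dV))$. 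I would then verify that $D$ is an operator of Laplace type in the sense of Definition \ref{BERSERKER}: its leading term is $\Delta$, it has no zeroth order term, and its first order coefficient is governed by $\mathrm d[\theta(F_0)]=-\alpha_K$. Since $M$ is Calabi--Yau the cones $C_i$ are minimal and $F_0$ is asymptotic to them with rate $\boldsymbol\nu>2$, so $\theta(F_0)-\theta_i=O(r^{\nu_i-2})$ and the associated vector field decays like $O(\rho^{\boldsymbol\delta-1})$ with $\boldsymbol\delta=\boldsymbol\nu-2>0$, as required.

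Next I would identify the asymptotics spaces. Because $2<\boldsymbol\mu<\boldsymbol\nu<3$ we have $0<\boldsymbol\mu-2<1$, so on each $C_i'$ the only homogeneous harmonic functions of rate $<\mu_i-2$ are the constants; thus $\mathsf P^D_{\boldsymbol\mu-2}$ is one dimensional per cone and is represented by the cut-off functions $q_i$ (for which $Dq_i$ is compactly supported, matching Proposition \ref{Asymptotics}(ii)), giving $W^{0,k-2,p}_{\boldsymbol\mu-2,\mathsf Q}=W^{0,k-2,p}_{\boldsymbol\mu-2,\mathsf P^D_{\boldsymbol\mu-2}}$. For the domain I would invoke stability: by the remark following Definition \ref{STABILITY}, for stable cones and $\mu_i$ just above $2$ the model space $V_{\mathsf P_{\mu_i}}(C_i')$ is spanned precisely by the $SU(m)\ltimes\mathbb{C}^m$-moment maps together with $r^2$ and $1$, so that $\dim V_{\mathsf P_{\mu_i}}(C_i')=N_{\Sigma_i}(\mu_i)=m^2+2m+1-\dim G_i=\dim\mathcal F_i$. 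By Proposition \ref{MapXi} the image of $\Xi_{(0,f_0)}$ consists of functions asymptotic to precisely this collection (the $\mathfrak u(1)$-part producing $r^2$ and the $\mathbb{R}$-part producing the constants) and meets $W^{k,p}_{\boldsymbol\mu}(L)$ trivially. Hence $\im\Xi_{(0,f_0)}$ and $\im\Psi^D_{\boldsymbol\mu}$ project isomorphically onto the same model space, and therefore $W^{1,k,p}_{\boldsymbol\mu,\mathsf P_{(0,f_0)}}=W^{1,k,p}_{\boldsymbol\mu,\mathsf P^D_{\boldsymbol\mu}}$, and likewise for the subspaces with vanishing initial data.

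Finally, with these identifications the operator $\mathrm dP(0,f_0)=\partial_t-D$ is exactly the operator treated in Theorem \ref{SobolevRegularityHeat} for the weight $\boldsymbol\gamma=\boldsymbol\mu$, whose hypotheses $\boldsymbol\mu>2-m$ and $(2,\mu_i]\cap\mathcal E_{\Sigma_i}=\emptyset$ hold by the choice of $\boldsymbol\mu$. That theorem supplies, for every right-hand side in $W^{0,k-2,p}_{\boldsymbol\mu-2,\mathsf Q}$, a unique solution in $\tilde W^{1,k,p}_{\boldsymbol\mu,\mathsf P_{(0,f_0)}}$, so $\mathrm dP(0,f_0)$ is a bijection; its boundedness is immediate from the Laplace-type structure together with Proposition \ref{Asymptotics}, and the Open Mapping Theorem then upgrades the bijection to an isomorphism. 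I expect the main obstacle to be the asymptotics matching of the third paragraph: the whole argument hinges on the geometric deformation space $\mathcal F$ capturing all harmonic modes of rate in $(0,2]$, and it is precisely the stability hypothesis $\sindex(C_i)=0$ that excludes extra modes which would otherwise enlarge $V_{\mathsf P_{\mu_i}}(C_i')$ beyond $\im\Xi_{(0,f_0)}$ and thereby destroy surjectivity.
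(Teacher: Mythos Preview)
Your identification of the asymptotics spaces in the third paragraph is essentially the paper's argument and is fine. The gap is in the second paragraph: the claim that $\hat V=\hat W$ at $(0,f_0)$ is not correct, and the whole proof rests on it.

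The one-form $\mathrm d[\Xi_{(0,f_0)}(w)]$ determines only the \emph{normal} component of an infinitesimal Lagrangian deformation; the tangential component depends on the actual parametrisation. The vector $\hat V(\eta)$ is the tangential part of the graph deformation $s\mapsto\Phi_L\circ(s\eta)$ inside $T^*L$, whereas $-\hat W(\mathrm d[\Xi_{(0,f_0)}(w)])$ is the tangential part of the ambient Hamiltonian vector field $X_{f_0}(w)$, produced by the symplectomorphisms $\Psi^e_M$ of Lemma~\ref{LEMUU}. These two motions share the same generating one-form but not the same tangential drift; for the $\mathcal E$-directions (genuine $U(m)\ltimes\mathbb C^m$-motions) there is no reason for the restriction of $X_{f_0}(w)$ to $F_0(L)$ to coincide with $\mathrm d\Phi_L(0,\mathrm d[\Xi_{(0,f_0)}(w)])$. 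Consequently the first-order term $\mathrm d[\theta(F_0)]\bigl(\hat V(\mathrm dv)-\hat W(\mathrm d[\Xi_{(0,f_0)}(w)])\bigr)$ is \emph{not} a function of $V=v-\Xi_{(0,f_0)}(w)$ alone, and $\mathrm dP(0,f_0)$ is not of the form $\partial_t-D$ for a Laplace-type $D$ acting on $V$.

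What the paper does instead is write $\mathrm dP(0,f_0)=H+K$, where $H=\partial_t-D$ uses $\hat W$ throughout (so that $H$ genuinely acts on $V$) and
\[
K(v,\Xi_{(0,f_0)}(w))=\mathrm d[\theta(F_0)]\bigl(\hat V(\mathrm dv)-\hat W(\mathrm dv)\bigr)
\]
depends only on the $v$-part. Your asymptotics matching then applies to $H$, and Theorem~\ref{SobolevRegularityHeat} makes $H$ an isomorphism. The remainder $K$ is shown to be compact (weighted Rellich--Kondrakov together with an Aubin--Dubinski\u{\i} argument), so $\mathrm dP(0,f_0)$ has Fredholm index zero; a last perturbation step for $T$ small yields the isomorphism. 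Note that this is also why the statement requires $T$ sufficiently small: your argument, had it worked, would give an isomorphism for every $T>0$, which should already have been a warning sign.
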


\begin{proof}
We only give a sketch of proof. Let us define an operator 
\begin{equation}\label{OPERH}
H:\tilde{W}^{1,k,p}_{\boldsymbol\mu,\mathsf{P}_{(0,f_0)}}((0,T)\times L)\longrightarrow W^{0,k-2,p}_{\boldsymbol\mu-2,\mathsf{Q}}((0,T)\times L)
\end{equation}
by
\begin{align*}
H(v,\Xi_{(0,f_0)}(w))&=\frac{\partial}{\partial t}(v-\Xi_{(0,f_0)}(w))-\Delta(v-\Xi_{(0,f_0)}(w))\quad\\&\quad\quad\quad\quad+\mathrm d[\theta(F_0)](\hat{W}(\mathrm d[v-\Xi_{(0,f_0)}(w)])).
\end{align*}
Observe that
\begin{equation}\label{OIAP}
\mathrm dP(0,f_0)(v,\Xi_{(0,f_0)}(w))-H(v,\Xi_{(0,f_0)}(w))=\mathrm d[\theta(F_0)](\hat{V}(\mathrm dv)-\hat{W}(\mathrm dv))
\end{equation}
and that each of the terms on the right side of \eq{OIAP} lies in $W^{0,k-2,p}_{\boldsymbol\mu-2,\mathsf{Q}}((0,T)\times L)$. Thus it follows that the operator $H$ in \eq{OPERH} is well defined. Let us also define $D:W^{k,p}_{\boldsymbol\mu,\mathsf{P}_{(0,f_0)}}(L)\rightarrow W^{k-2,p}_{\boldsymbol\mu-2,\mathsf{Q}}(L)$ by
\[
D(v,\Xi_{(0,f_0)}(w))=\Delta(v-\Xi_{(0,f_0)}(w))+\mathrm d[\theta(F_0)](\hat{W}(\mathrm d[v-\Xi_{(0,f_0)}(w)])).
\]
Then one can show that $D$ is an operator of Laplace type as defined in \S\ref{GGG} and we can define weighted Sobolev spaces with discrete asymptotics as in \S\ref{DiscreteASYMPTOTICS}. Notice that $H=\partial_t-D$. 

Now comes the key point about the stability of the special Lagrangian cones $C_1,\ldots,C_n$. Using the stability of $C_1,\ldots,C_n$ and Theorem \ref{Fredholm2} we find that 
\[
W^{k,p}_{\boldsymbol\mu,\mathsf{P}_{(0,f_0)}}(L)=W^{k,p}_{\boldsymbol\mu,\mathsf{P}^D_{\boldsymbol\mu}}(L)\quad\mbox{ and }\quad W^{k-2,p}_{\boldsymbol\mu-2,\mathsf{Q}}(L)=W^{k-2,p}_{\boldsymbol\mu-2,\mathsf{P}^D_{\boldsymbol\mu-2}}(L)
\]
and hence
\[
\tilde{W}^{1,k,p}_{\boldsymbol\mu,\mathsf{P}_{(0,f_0)}}((0,T)\times L)=\tilde{W}^{1,k,p}_{\boldsymbol\mu,\mathsf{P}^D_{\boldsymbol\mu}}((0,T)\times L)
\]
and also
\[
W^{0,k-2,p}_{\boldsymbol\mu-2,\mathsf{Q}}((0,T)\times L)=W^{0,k-2,p}_{\boldsymbol\mu-2,\mathsf{P}^D_{\boldsymbol\mu-2}}((0,T)\times L),
\]
where the weighted parabolic Sobolev spaces with discrete asymptotics are defined as in \S\ref{PIIEO}. In particular it follows that $H$ in \eq{OPERH} is a map
\begin{equation}\label{HJK}
H:\tilde{W}^{1,k,p}_{\boldsymbol\mu,\mathsf{P}^D_{\boldsymbol\mu}}((0,T)\times L)\longrightarrow W^{0,k-2,p}_{\boldsymbol\mu-2,\mathsf{P}^D_{\boldsymbol\mu-2}}((0,T)\times L)
\end{equation}
and Theorem \ref{SobolevRegularityHeat} and the Open Mapping Theorem \cite[XV, Thm. 1.3]{Lang} imply that \eq{HJK} is an isomorphism of Banach spaces.

Using interpolation estimates for weighted parabolic Sobolev spaces one can show that the operator $\mathrm dP(0,f_0)-H$ is a bounded operator
\[
\mathrm dP(0,f_0)-H:\tilde{W}^{1,k,p}_{\boldsymbol\mu,\mathsf{P}^D_{\boldsymbol\mu}}((0,T)\times L)\longrightarrow C^0((0,T);W^{k-2,p}_{\boldsymbol\mu-2}(L))
\]
and, using the Rellich--Kondrakov Theorem for weighted spaces and the Aubin--Dubinski\u{\i} Lemma as in \cite[Prop. 7.1]{Behrndt1}, one can further show that $\mathrm dP(0,f_0)-H$ is a compact operator $\tilde{W}^{1,k,p}_{\boldsymbol\mu,\mathsf{P}^D_{\boldsymbol\mu}}((0,T)\times L)\longrightarrow W^{0,k-2,p}_{\boldsymbol\mu-2,\mathsf{P}^D_{\boldsymbol\mu-2}}((0,T)\times L)$. Since \eq{OPERH} is an isomorphism, it is a Fredholm operator with index zero. Using a standard perturbation argument and the Fredholm alternative it then follows that for $T>0$ sufficiently small \eq{uueeuu} is an isomorphism.
\end{proof}

We are now ready to prove short time existence of solutions with low regularity to the Cauchy problem \eq{CauchyProblem2}.

\begin{Prop}\label{ExistenceSolutions}
Let $\boldsymbol\mu\in\mathbb{R}^n$ with $2<\boldsymbol\mu<\boldsymbol\nu$ and $(2,\mu_i]\cap\mathcal{E}_{\Sigma_i}=\emptyset$ for $i=1,\ldots,n$ and assume as in Proposition \ref{IsomorphismLinearization} that $C_1,\ldots,C_n$ are stable special Lagrangian cones. Then there exists $\tau>0$ and $(u,f)\in\tilde{\mathcal{D}}_{\boldsymbol\mu}^{k,p}$, such that $P(u,f)=0$ on the time interval $(0,\tau)$.
\end{Prop}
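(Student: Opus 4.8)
The plan is to solve the Cauchy problem \eq{CauchyProblem2} by the Inverse Function Theorem for Banach manifolds, exactly as one proves short time existence for scalar parabolic equations. The two inputs are already in place: by Proposition \ref{Smoothness} the operator $P$ extends to a smooth map between Banach manifolds, and by Proposition \ref{IsomorphismLinearization} its linearization at the initial condition $(0,f_0)$ is an isomorphism once $T$ is small and the cones $C_1,\ldots,C_n$ are stable. First I would record that $\tilde{\mathcal{D}}^{k,p}_{\boldsymbol\mu}$ is an open subset of an affine Banach subspace of $\mathcal{D}^{k,p}_{\boldsymbol\mu}$, whose model space at $(0,f_0)$ is $\tilde{W}^{1,k,p}_{\boldsymbol\mu,\mathsf{P}_{(0,f_0)}}((0,T)\times L)$ after identifying $f^*(T\mathcal{F})$ with the discrete asymptotics via $\Xi_{(0,f_0)}$; a solution of \eq{CauchyProblem2} on $(0,\tau)$ is then nothing but a point of $\tilde{\mathcal{D}}^{k,p}_{\boldsymbol\mu}$ with $P(u,f)=0$.

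The key computation is to evaluate $P$ at the initial point. Since $\Phi^{f_0}_L\circ 0=F_0$ and $f_0$ is constant in time, so that $\mathrm df_0/\mathrm dt=0$, we get $P(0,f_0)=-\theta(F_0)$, which is independent of $t$. The discussion following Proposition \ref{Smoothness}, using the hypothesis $(2,\mu_i]\cap\mathcal{E}_{\Sigma_i}=\emptyset$, shows that $\theta(F_0)$ lies in $W^{k-2,p}_{\boldsymbol\mu-2,\mathsf{Q}}(L)$, and therefore
\[
\bigl\|P(0,f_0)\bigr\|_{W^{0,k-2,p}_{\boldsymbol\mu-2,\mathsf{Q}}((0,\tau)\times L)}=\tau^{1/p}\bigl\|\theta(F_0)\bigr\|_{W^{k-2,p}_{\boldsymbol\mu-2,\mathsf{Q}}(L)}\longrightarrow 0
\]
as $\tau\to 0$. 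Thus the plan reduces to the following: apply the Inverse Function Theorem at $(0,f_0)$ to obtain a neighbourhood of $P(0,f_0)$ lying in the image of $P$, and then make $\tau$ small enough that the origin falls inside this neighbourhood, producing $(u,f)\in\tilde{\mathcal{D}}^{k,p}_{\boldsymbol\mu}$ with $P(u,f)=0$.

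The hard part will be the dependence on $\tau$. The Inverse Function Theorem only guarantees that $P$ hits a ball of some radius $\delta(\tau)>0$ about $P(0,f_0)$, and since the entire functional setting lives over the shrinking interval $(0,\tau)$ one must rule out that $\delta(\tau)$ collapses faster than $\|P(0,f_0)\|\sim\tau^{1/p}$. I would handle this with a quantitative Inverse Function Theorem, in which $\delta(\tau)$ is bounded below in terms of $\|\mathrm dP(0,f_0)^{-1}\|$ and the size of the quadratic remainder of $P$ near $(0,f_0)$. The crucial point is that both quantities can be bounded uniformly for $\tau\in(0,T_0]$: the bound on the inverse is uniform in the length of the time interval because it descends from the maximal regularity estimate of Theorem \ref{SobolevRegularityHeat}, which is built from interval-independent convolution bounds for the fundamental solution, while the uniform control of the remainder follows from the smoothness of $P$ together with the interpolation estimates for weighted parabolic Sobolev spaces already used in Proposition \ref{Smoothness}. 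Granting these uniform estimates, $\delta(\tau)$ stays bounded below by a positive constant as $\tau\to 0$, so for $\tau$ sufficiently small $\|P(0,f_0)\|<\delta(\tau)$ and the desired low-regularity solution $(u,f)$ exists on $(0,\tau)$; the full details are carried out in \cite{Behrndt1}.
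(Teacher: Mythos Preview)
Your overall strategy---apply the Inverse Function Theorem at $(0,f_0)$ and then shrink time so that $P(0,f_0)$ becomes small---is sound and uses exactly the inputs the paper uses, but the paper's execution is slicker and completely sidesteps the $\tau$-dependence difficulty you (rightly) worry about.

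The paper works on a \emph{fixed} time interval $(0,T)$, the one supplied by Proposition~\ref{IsomorphismLinearization}. It applies the Inverse Function Theorem once, obtaining fixed neighbourhoods $V\ni(0,f_0)$ and $W\ni P(0,f_0)$ with $P:V\to W$ a diffeomorphism. Then, rather than trying to hit $0$ in a shrinking target space, it defines for each $\tau\in(0,T)$ the element
\[
w_\tau(t,x)=\begin{cases}0,&t<\tau,\\ P(0,f_0)(t,x),&t\ge\tau,\end{cases}
\]
of $W^{0,k-2,p}_{\boldsymbol\mu-2,\mathsf{Q}}((0,T)\times L)$. Since $w_\tau-P(0,f_0)$ is supported on $(0,\tau)$ and $P(0,f_0)=-\theta(F_0)$ is constant in time, $\|w_\tau-P(0,f_0)\|\to 0$ as $\tau\to 0$---precisely your computation. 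Hence for small $\tau$ one has $w_\tau\in W$, so there exists $(u,f)\in V$ with $P(u,f)=w_\tau$, and in particular $P(u,f)=0$ on $(0,\tau)$.

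The point is that the Inverse Function Theorem is invoked only once, on the fixed interval, so no uniformity in $\tau$ of the inverse bound or of the quadratic remainder is ever needed; the cutoff target trades that analytic work for a one-line observation. Your route via a quantitative Inverse Function Theorem with uniform-in-$\tau$ constants can certainly be made to work, but it is more laborious and would oblige you to verify that the compact-perturbation argument in the proof of Proposition~\ref{IsomorphismLinearization} actually produces inverse bounds stable under restriction to subintervals.
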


\begin{proof}
By Proposition \ref{IsomorphismLinearization},
\begin{equation*}
\mathrm dP(0,f_0):\tilde{W}^{1,k,p}_{\boldsymbol\mu,\mathsf{P}_{(0,f_0)}}((0,T)\times L)\longrightarrow W^{0,k-2,p}_{\boldsymbol\mu-2,\mathsf{Q}}((0,T)\times L)
\end{equation*}
is an isomorphism of Banach spaces. Since $P:\tilde{\mathcal{D}}_{\boldsymbol\mu}^{k,p}\rightarrow W^{0,k-2,p}_{\boldsymbol\mu-2,\mathsf{Q}}((0,T)\times M)$ is smooth by Proposition \ref{Smoothness}, the Inverse Function Theorem for Banach manifolds \cite[XIV, Thm. 1.2]{Lang} shows that there exist open neighbourhoods $V\subset\tilde{\mathcal{D}}_{\boldsymbol\mu}^{k,p}$ of $(0,f_0)$ and $W\subset W^{0,k-2,p}_{\boldsymbol\mu-2,\mathsf{Q}}((0,T)\times L)$ of $P(0,f_0)$, such that $P:V\rightarrow W$ is a smooth diffeomorphism. For $\tau\in(0,T)$ we define a function $w_{\tau}$ on $(0,T)\times L$ by
\[
w_{\tau}(t,x)=\left\{\begin{array}{ll}0 & \mbox{for }t<\tau\mbox{ and }x\in L,\\P(0,f_0)(t,x) & \mbox{for }t\geq\tau\mbox{ and }x\in L.\end{array}\right.
\]
Then $w_{\tau}\in W^{0,k-2,p}_{\boldsymbol\mu-2,\mathsf{Q}}((0,T)\times L)$ for every $\tau\in(0,T)$. In particular we can make $w_{\tau}-P(0,f_0)$ arbitrarily small in $W^{0,k-2,p}_{\boldsymbol\mu-2,\mathsf{Q}}((0,T)\times L)$ by making $\tau>0$ small. Thus for $\tau>0$ sufficiently small we have $w_{\tau}\in W$ and there exists $(u,f)\in V$ with $P(u,f)=w_{\tau}$. But then $P(u,f)=0$ on $(0,\tau)$ as we wanted to show.
\end{proof}

\subsection{Spatial regularity theory of the flow}
In this section we discuss the spatial regularity of solutions to $P(u,f)=0$. Detailed proofs of the results can be found in \cite[\S 9.6]{Behrndt1}.

We begin with the study of the spatial regularity of the function $u$.

\begin{Lem}\label{InteriorRegularity}
Let $(u,f)\in\mathcal{D}^{k,p}_{\boldsymbol\mu}$ be a solution of $P(u,f)=0$. Then $u(t,\cdot)\in C^{\infty}(L)$ for every $t\in(0,T)$.
\end{Lem}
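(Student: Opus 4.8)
The plan is to prove this by a purely spatial interior-regularity bootstrap, exploiting that every ingredient of the equation $P(u,f)=0$ is smooth in the space variable away from the conical points, while the only low regularity, that of $f$, enters solely through the time variable. First I would localise: fix $t_0\in(0,T)$ and a point $p\in L$, choose a coordinate chart $\Omega\subset L$ around $p$ bounded away from the singularities, and work on a backward parabolic cylinder $(t_0-\delta,t_0]\times\Omega$, on which the induced metric $F_0^*(g)$ and the structures $\Phi^f_L,\Omega,J,g$ are smooth. By the expansion of Lemma \ref{HELP}, the equation $P(u,f)=0$ reads on this cylinder
\[
\partial_t u=\Delta u-\mathrm d[\theta(F_0)](\hat V(\mathrm du))+Q(f,\cdot,\mathrm du,\nabla\mathrm du)+F(f_0,\cdot,0,0)+\Xi_{(u,f)}\Bigl(\tfrac{\mathrm df}{\mathrm dt}\Bigr),
\]
a fully nonlinear parabolic equation whose leading part is the Laplacian. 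The crucial structural point is that $\Xi_{(u,f)}(\mathrm df/\mathrm dt)$ is, for each fixed $t$, the restriction to $L$ of a smooth Hamiltonian potential on $M$ by Proposition \ref{MapXi}, hence spatially smooth with merely $L^p$ dependence in $t$; likewise $F(f_0,\cdot,0,0)$ is spatially smooth on $\Omega$.

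Second, I would run the bootstrap in the space variable only. Since $(u,f)\in\mathcal D^{k,p}_{\boldsymbol\mu}$ with $k-\tfrac mp>2$, the weighted Sobolev Embedding Theorem \ref{WeightedSobolevEmbedding} already gives $u(t,\cdot)\in C^{2,\alpha}$ on $\Omega$ for some $\alpha>0$, so the coefficients $a^{ij}=a^{ij}(\cdot,\mathrm du,\nabla\mathrm du)$ of the linearisation are continuous and the operator is uniformly parabolic on the cylinder. Taking spatial difference quotients (or differentiating the equation once in space), the quotient $\partial_k u$ satisfies a linear parabolic equation whose forcing consists of lower-order spatial derivatives of $u$ controlled at the current level, spatial derivatives of the smooth coefficients and of $F(f_0,\cdot,0,0)$, and $\partial_k$ of the $\Xi$-forcing, which is again spatially smooth and $L^p$ in time. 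Applying interior parabolic $L^p$ (Calder\'on--Zygmund) estimates on shrinking cylinders and iterating, one gains spatial derivatives step by step; since the estimates on a backward cylinder control $u$ and its spatial derivatives at the top time $t_0$, one obtains $u(t_0,\cdot)\in C^\infty$ near $p$. As $p\in L$ and $t_0\in(0,T)$ are arbitrary, this yields $u(t,\cdot)\in C^\infty(L)$ for every $t$.

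The main obstacle is the interplay between the poor time-regularity of $f$ and the need to regularise in space. Because $f\in W^{1,p}((0,T);\mathcal F)$ only, the datum $\Xi_{(u,f)}(\mathrm df/\mathrm dt)$ is not H\"older in time, so parabolic Schauder theory (which would demand time-H\"older data) is unavailable and one is forced to carry out the whole argument with $L^p$-in-time, smooth-in-space data; verifying that every spatial derivative of the $\Xi$-term, and of $Q$ via the weighted estimates of Lemma \ref{HELP}, remains in the appropriate $L^p$ space is the technical heart. A secondary difficulty is the fully nonlinear character: the coefficients depend on $\nabla^2u$, so the bootstrap must be ordered so that at each stage the coefficients already possess the regularity established at the previous stage, which is precisely why the initial $C^{2,\alpha}$ bound coming from $k-\tfrac mp>2$ is essential to start the iteration.
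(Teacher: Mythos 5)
The paper does not actually give a proof of Lemma \ref{InteriorRegularity}: it declares the result ``more or less standard'' and skips it, so your proposal is supplying exactly the argument the author had in mind, namely interior parabolic regularity in the spatial variables away from the cone points, carried out with $L^p$-in-time rather than H\"older-in-time estimates because $f$ is only $W^{1,p}$ in $t$. Your bootstrap is correctly structured: you differentiate the equation in space rather than treating the fully nonlinear term as a forcing term (which would gain nothing, since $Q$ depends on $\nabla\mathrm du$), and you correctly identify the starting regularity $k-\tfrac{m}{p}>2$ and the reason Schauder theory is unavailable. One small overstatement: $\Xi_{(u,f)}(\mathrm df/\mathrm dt)=(\Phi^f_L\circ\mathrm du)^*(H_f(\mathrm df/\mathrm dt))$ is not a priori spatially smooth --- it is the pullback of a smooth function on $M$ by the map $\Phi^f_L\circ\mathrm du$, so at each stage of the iteration it has one fewer spatial derivative than $u$; this is harmless because a forcing term with $j-1$ spatial derivatives still yields $u$ with $j+1$ via the parabolic estimate, so the bootstrap self-improves, but the gain should be phrased that way rather than by asserting the term is smooth from the outset.
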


\noindent The proof of Lemma \ref{InteriorRegularity} is more or less standard and not very exciting, so we skip it.

In the next lemma the decay rates of the higher derivatives of the function $u+a$ are studied, where $a\in C^{\infty}_{\boldsymbol\nu}(L)$ is given by Theorem \ref{LagrangianNeighbourhoodConicalSingularities}.

\begin{Lem}\label{XXXC}
Let $(u,f)\in\mathcal{D}^{k,p}_{\boldsymbol\mu}$ be a solution of $P(u,f)=0$. Assume that $u+a\in W^{1,2,p}_{\boldsymbol\gamma}((0,T)\times L)$ for some $\boldsymbol\gamma\in\mathbb{R}^n$ with $2<\boldsymbol\gamma<3$. Then $u+a\in W^{1,l,p}_{\boldsymbol\gamma}((0,T)\times L))$ for every $l\in\mathbb{N}$.
\end{Lem}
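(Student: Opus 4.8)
The plan is to recast the equation $P(u,f)=0$ as an inhomogeneous heat equation for the displacement $w=u+a$ and then bootstrap its spatial regularity by a parabolic rescaling argument adapted to the cone, keeping the weight $\boldsymbol\gamma$ and the single time derivative fixed while letting the spatial order grow.

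First I would rewrite the equation. Since we have assumed $M$ Calabi--Yau with zero Maslov class, $P(u,f)=0$ reads $\partial_t u=\theta(\Phi^f_L\circ\mathrm du)+\Xi_{(u,f)}(\mathrm df/\mathrm dt)$. Inserting the expansion of $\theta(\Phi^f_L\circ\mathrm du)=F(f,\cdot,\mathrm du,\nabla\mathrm du)$ from Lemma \ref{HELP}, using $F(f_0,\cdot,0,0)=\theta(F_0)$ and the local expansion $\phi_i^*(\theta(F_0))=\theta_i+\Delta a_i+R_i$ from \eq{POIE}, and using that $a$ is time-independent, the reference Laplacian of $a$ cancels the $\Delta a_i$ term near each singularity and one obtains
\[
\partial_t w-\Delta w=E,\qquad w=u+a ,
\]
where $E$ gathers the constant cone angles $\theta_i$, the fixed smooth remainder $R_i(\cdot,\mathrm da_i,\nabla\mathrm da_i)$, the first order term $-\mathrm d[\theta(F_0)](\hat V(\mathrm du))$, the quadratically vanishing nonlinearity $Q(f,\cdot,\mathrm du,\nabla\mathrm du)$, and the moment-map-asymptotic term $\Xi_{(u,f)}(\mathrm df/\mathrm dt)$. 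Here $\Delta$ is taken with respect to the fixed metric $F_0^*(g)$, so the operator on the left is a genuine cone heat operator, and the only $w$-dependence of $E$ at second-derivative order sits inside $Q$.

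Next I would set up the induction on $l$. Assuming $w\in W^{1,l,p}_{\boldsymbol\gamma}((0,T)\times L)$ for some $l\geq 2$, the aim is to show $w\in W^{1,l+1,p}_{\boldsymbol\gamma}((0,T)\times L)$, after which the claim follows by iteration. The engine is a local parabolic estimate on the cone: on each dyadic annulus $\Sigma_i\times(r,2r)$ one rescales $x\mapsto x/r$ and $t\mapsto t/r^2$, under which the cone metric $g_i$ is invariant and the equation becomes uniformly parabolic on the fixed annulus $\Sigma_i\times(1,2)$. The structural estimates of Lemma \ref{HELP} for $Q$ and $R_i$, together with the fact that $\Xi_{(u,f)}(\mathrm df/\mathrm dt)$ is asymptotic to a moment map (Proposition \ref{MapXi}) and that $a\in C^{\infty}_{\boldsymbol\nu}(L)$ is smooth with $\boldsymbol\nu>2$, guarantee that the rescaled coefficients and forcing are controlled uniformly in $r$ in the relevant $L^p$-norm. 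Applying standard interior $L^p$-parabolic regularity (the local, uniformly parabolic counterpart of Theorem \ref{SobolevRegularityHeat}) on the fixed annulus gains one spatial derivative, and summing the rescaled estimates over the dyadic family with the powers of $r$ supplied by the weight reproduces the global estimate $w\in W^{1,l+1,p}_{\boldsymbol\gamma}$. Away from the singular set this is just interior parabolic regularity, already covered in spirit by Lemma \ref{InteriorRegularity}. Throughout, products of weighted Sobolev functions are handled by weighted multiplication and interpolation inequalities deduced from Theorem \ref{WeightedSobolevEmbedding}, and one checks that no critical rate is crossed since $2<\boldsymbol\gamma<3$.

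I expect the main obstacle to be the top-order quasilinear term in $E$: differentiating $Q(f,\cdot,\mathrm du,\nabla\mathrm du)$ up to order $l-1$ formally produces a term containing $\nabla^{l+1}w$, which is not yet controlled at step $l$. The point is that $Q$ contributes its second derivatives only through a quasilinear coefficient $b(\mathrm du)$ that vanishes with $\mathrm du$ and carries extra powers of $\rho$ by the $\partial_z Q=O(\cdot)$ estimate of Lemma \ref{HELP}; so after rescaling this coefficient is uniformly small and $\partial_t-\Delta-b(\mathrm du)\nabla^2$ remains uniformly parabolic. When one differentiates to raise the regularity, the troublesome $\nabla^{l+1}w$ cancels in the commutator $[\nabla,b\nabla^2]$, leaving only the lower-order term $(\nabla b)\nabla^2 w$, which the bootstrap hypothesis controls. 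Making this precise -- verifying the uniform smallness of the rescaled quasilinear coefficients and the correct bookkeeping of the weight $\boldsymbol\gamma$ when summing over annuli -- is the technical heart of the argument; everything else is routine weighted parabolic estimation.
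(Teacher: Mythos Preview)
Your proposal is essentially correct and is a reasonable fleshing-out of what the paper itself leaves unwritten: the paper explicitly skips the proof of this lemma, saying only that it ``merely uses some more or less standard techniques for linear parabolic equations on manifolds with conical singularities and Lemma~\ref{InteriorRegularity}.'' Your scheme --- rewrite $P(u,f)=0$ as an inhomogeneous heat equation for $w=u+a$, then bootstrap spatial regularity via parabolic rescaling on dyadic annuli while freezing the quasilinear coefficient --- is exactly the sort of standard argument the paper has in mind, and is consistent with the way the paper handles the neighbouring Lemma~\ref{ImprovedDecay}, where the same equation is recast as $\partial_t v_i=\Delta_{g_i}v_i+h_i$ on each cone end.

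One point worth tightening in your write-up: you list the cone angles $\theta_i$ and the moment-map term $\Xi_{(u,f)}(\mathrm df/\mathrm dt)$ as part of the forcing $E$ and assert they are ``controlled uniformly in $r$ in the relevant $L^p$-norm.'' Taken individually these contain pieces of homogeneity $0$, which do \emph{not} lie in $W^{0,l-1,p}_{\boldsymbol\gamma-2}$ for $\boldsymbol\gamma>2$. The resolution is that the hypothesis $w\in W^{1,2,p}_{\boldsymbol\gamma}$ already forces $E=\partial_t w-\Delta w\in L^p_{\boldsymbol\gamma-2}$, so the order-zero contributions cancel at the level of the full forcing; for the higher spatial derivatives $\nabla^jE$ with $j\geq 1$ the constant pieces vanish and the rate-$1$ and rate-$2$ moment-map pieces satisfy the weighted bounds because $\boldsymbol\gamma<3$. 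With this bookkeeping made explicit, your dyadic summation closes and the induction on $l$ goes through as you describe.
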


\noindent The proof of Lemma \ref{XXXC} is also not very exciting and merely uses some more or less standard techniques for linear parabolic equations on manifolds with conical singularities and Lemma \ref{InteriorRegularity}. Therefore we skip the proof of Lemma \ref{XXXC} as well.

In the next lemma we show that the rate of decay of the function $u$ becomes better for positive time $t>0$.

\begin{Lem}\label{ImprovedDecay}
Let $(u,f)\in\tilde{\mathcal{D}}^{k,p}_{\boldsymbol\mu}$ be a solution of the Cauchy problem \eq{CauchyProblem2}. Then $u+a\in W^{1,2,p}_{\boldsymbol\gamma}(I\times L)$ for every $I\subset\subset(0,T)$ and $\boldsymbol\gamma\in\mathbb{R}^n$ with $2<\boldsymbol\gamma<3$ and $(2,\gamma_i]\cap\mathcal{E}_{\Sigma_i}=\emptyset$ for $i=1,\ldots,n$.
\end{Lem}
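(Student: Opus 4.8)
The plan is to rewrite the fully nonlinear equation $P(u,f)=0$ as an inhomogeneous linear parabolic equation for $w=u+a$, where $a\in C^\infty_{\boldsymbol\nu}(L)$ is the function supplied by Theorem \ref{LagrangianNeighbourhoodConicalSingularities}, and then to run a parabolic bootstrap based on Theorem \ref{SobolevRegularityHeat}. In the Calabi--Yau, zero-Maslov setting we have $P(u,f)=\partial_t u-\theta(\Phi^f_L\circ\mathrm du)-\Xi_{(u,f)}(\tfrac{\mathrm df}{\mathrm dt})$, and Lemma \ref{HELP} expands the $\theta$-term as $\theta(F_0)+\Delta u-\mathrm d[\theta(F_0)](\hat V(\mathrm du))+Q(f,\cdot,\mathrm du,\nabla\mathrm du)$. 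Setting $D_0v=\Delta v-\mathrm d[\theta(F_0)](\hat V(\mathrm dv))$, which is an operator of Laplace type for $F_0^*(g)$ in the sense of Definition \ref{BERSERKER}, the equation $P(u,f)=0$ becomes
\[
\partial_t w-D_0 w=\bigl(\theta(F_0)-D_0a\bigr)+Q(f,\cdot,\mathrm du,\nabla\mathrm du)+\Xi_{(u,f)}\Bigl(\tfrac{\mathrm df}{\mathrm dt}\Bigr)=:g.
\]
Before starting I would record the a priori information: since $u\in W^{1,k,p}_{\boldsymbol\mu}((0,T)\times L)$ and $a\in C^\infty_{\boldsymbol\nu}(L)$ with $2<\boldsymbol\mu<\boldsymbol\nu<3$, the function $w$ lies in $W^{1,2,p}_{\boldsymbol\mu}((0,T)\times L)$ \emph{with no discrete asymptotics}, and by Lemma \ref{InteriorRegularity} and Lemma \ref{XXXC} its spatial derivatives are controlled to all orders in the weight $\boldsymbol\mu$.

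The heart of the matter is to estimate the decay of $g$. Using the expansion \eqref{POIE} of $\theta(F_0)$ together with $a\in C^\infty_{\boldsymbol\nu}$, the fixed term $\theta(F_0)-D_0a$ equals the constant Lagrangian angle $\theta_i$ of $C_i$ near each singularity plus a remainder that, after the leading $\Delta a_i$ cancels against $D_0a$ and by the $R_i$-estimate of Lemma \ref{HELP}, lies in $W^{k-2,p}_{\boldsymbol\gamma-2}(L)$. The quadratic term $Q$ is controlled by the bounds of Lemma \ref{HELP}: if $w=O(\rho^{\boldsymbol\delta})$ so that $\mathrm du=O(\rho^{\boldsymbol\delta-1})$ and $\nabla\mathrm du=O(\rho^{\boldsymbol\delta-2})$, one gets $Q\in W^{0,k-2,p}_{\min(2\boldsymbol\delta-2,\boldsymbol\gamma)-2}$. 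Finally $\Xi_{(u,f)}(\tfrac{\mathrm df}{\mathrm dt})$ is, by Proposition \ref{MapXi}, asymptotic to $\mathfrak u(m)\oplus\mathbb C^m\oplus\mathbb R$-moment maps of rate at most two, hence lies in $W^{0,k-2,p}_{\boldsymbol\gamma-2}$ modulo the constants near each singularity. Collecting these, $g\in W^{0,k-2,p}_{\boldsymbol\delta'-2,\mathsf Q}((0,T)\times L)$ for some $\boldsymbol\delta'>\boldsymbol\delta$, the constants $\theta_i$ being precisely the span $\{q_1,\dots,q_n\}$ defining $\mathsf Q$.

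I would then invert $\partial_t-D_0$. To account for $w(0,\cdot)=a\neq 0$ and to obtain the gain only for positive time, multiply by a time cutoff $\chi$ that vanishes near $t=0$ and equals $1$ on $I\subset\subset(0,T)$; then $\tilde w=\chi w$ solves $\partial_t\tilde w-D_0\tilde w=\chi g+\chi' w$ with zero initial value, the right-hand side lying in $W^{0,k-2,p}_{\boldsymbol\delta'-2,\mathsf P^{D_0}_{\boldsymbol\delta'-2}}$. Theorem \ref{SobolevRegularityHeat} then gives $\tilde w\in W^{1,k,p}_{\boldsymbol\delta',\mathsf P^{D_0}_{\boldsymbol\delta'}}(I\times L)$. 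The decisive use of stability enters here: by Definition \ref{STABILITY} together with $(2,\gamma_i]\cap\mathcal E_{\Sigma_i}=\emptyset$, every model function in $\mathsf P^{D_0}_{\boldsymbol\delta'}$ has rate at most $2<\boldsymbol\mu$, so the a priori bound $w=O(\rho^{\boldsymbol\mu})$ forces the discrete-asymptotics component of $\tilde w$ to vanish; hence $w=\tilde w\in W^{1,2,p}_{\boldsymbol\delta'}(I\times L)$, a genuine asymptotics-free gain of decay. Iterating $\boldsymbol\delta\mapsto\min(2\boldsymbol\delta-2,\boldsymbol\gamma)$, which increases strictly past any indicial root precisely because $(2,\gamma_i]$ avoids $\mathcal E_{\Sigma_i}$, reaches $\boldsymbol\gamma$ after finitely many steps, each shrinking the interval slightly but still exhausting any $I\subset\subset(0,T)$.

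The step I expect to be the main obstacle is the decay bookkeeping for $g$: one must cleanly separate the rate-$\leq 2$ moment-map and constant contributions, show that they land in the discrete-asymptotics and $\mathsf Q$ parts and then drop out of $w$ by its a priori rate, and simultaneously verify that the fixed term $\theta(F_0)-D_0a$ and the quadratic error $Q$ decay at the required rate through the estimates of Lemma \ref{HELP}, all while threading the time-cutoff through the bootstrap so that the improvement survives on every $I\subset\subset(0,T)$.
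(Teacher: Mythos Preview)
Your approach is correct and its skeleton is the same as the paper's: rewrite $P(u,f)=0$ as an inhomogeneous linear parabolic equation for $w=u+a$, observe that the quadratic error has rate roughly $2\boldsymbol\mu-4$, apply the linear theory of \S\ref{PIIEO}, kill the discrete asymptotics using the a~priori bound $w\in W^{1,k,p}_{\boldsymbol\mu}$, and iterate $\boldsymbol\mu\mapsto 2\boldsymbol\mu-2$ until reaching $\boldsymbol\gamma$. The tactical choices differ in three places. First, the paper takes as reference operator the Laplacian for a metric $\tilde g$ equal to the \emph{exact} cone metric on each end, rather than your Laplace-type operator $D_0$ for $F_0^*(g)$; this makes the model problem on each $\Sigma_i\times(0,R)$ literally the cone heat equation, and the paper packages all remaining terms (including what survives of $\Xi$) into a force $h\in W^{0,k-2,p}_{2\boldsymbol\mu-4}$ with no discrete part. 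Second, instead of a time cutoff, the paper writes $w$ explicitly via the Friedrichs heat kernel $\tilde H$ and handles the initial data by the smoothing estimate $\int_L\tilde H(t,\cdot,y)a(y)\,\mathrm dV_{\tilde g}\in C^\infty_{\mathsf P^\Delta_{\boldsymbol\delta}}(L)$ for every $t>0$ and every $\boldsymbol\delta$; your cutoff achieves the same localisation to $I\subset\subset(0,T)$. Third, your mechanism for discarding the asymptotics---every element of $V_{\mathsf P_{\boldsymbol\delta'}}(C_i')$ has rate $\le 2<\boldsymbol\mu$, so the a~priori inclusion $w\in W^{1,k,p}_{\boldsymbol\mu}$ forces the asymptotic component to vanish---is exactly the paper's one-line implication $v\in W^{1,k,p}_{\boldsymbol\mu}\cap W^{1,k,p}_{2\boldsymbol\mu-2,\mathsf P^\Delta_{2\boldsymbol\mu-2}}\Rightarrow v\in W^{1,k,p}_{2\boldsymbol\mu-2}$. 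One small remark: you call this ``the decisive use of stability'', but in this lemma neither proof uses stability of $C_i$ as such---what is used is only the gap hypothesis $(2,\gamma_i]\cap\mathcal E_{\Sigma_i}=\emptyset$, which on its own guarantees that all asymptotic rates appearing are $\le 2$.
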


\begin{proof}
Finally we have a lemma which is more interesting, so we will discuss the proof in some detail. The proof uses some more advanced techniques from functional analysis which can be found in Davies \cite{Davies}. Denote $v=u+a$. Since $\boldsymbol\mu<\boldsymbol\nu$, we have that $u+a\in W^{1,k,p}_{\boldsymbol\mu}((0,T)\times L)$. Denote $u_i=\phi_i^*(u)$ and define $v_i=u_i+a_i$ for $i=1,\ldots,n$. Then one can show that $v_i$ satisfies
\begin{gather}\label{WWW}
\begin{split}
&\frac{\partial v_i}{\partial t}(t,\sigma,r)=\Delta v_i(t,\sigma,r)+h_i(t,\sigma,r)\;\mbox{for }(t,\sigma,r)\in(0,T)\times\Sigma_i\times(0,R),\\
&v_i(0,\sigma,r)=a_i(\sigma,r)\;\quad\quad\quad\quad\quad\quad\quad\;\;\mbox{for }(\sigma,r)\in\Sigma_i\times(0,R)
\end{split}
\end{gather}
and $i=1,\ldots,n$, where the Laplace operator is taken with respect to the Riemannian cone metric $g_i=\iota_i^*(g')$ on $\Sigma_i\times(0,R)$ and $h_i:(0,T)\times\Sigma_i\times(0,R)\rightarrow\mathbb{R}$ is a function that is defined using the error terms $Q$ and $R_i$ from Lemma \ref{HELP} and the function $\Xi_{(u,f)}(\frac{\mathrm df}{\mathrm dt})$. The precise definition of $h_i$ is not important, the only important fact about $h_i$ is that if we choose some $h\in W^{0,k-2,p}((0,T)\times L)$ with $\phi_i^*(h)=h_i$ for $i=1,\ldots,n$, then $h\in W^{0,k-2,p}_{2\boldsymbol\mu-4}((0,T)\times L)$.

Now choose some Riemannian metric $\tilde{g}$ on $L$ with $\phi_i^*(g_i)=\tilde{g}$ for $i=1,\ldots,n$. Since $v_i$ satisfies \eq{WWW} for $i=1,\ldots,n$, we find that 
\begin{gather*}
\begin{split}
&\frac{\partial v}{\partial t}(t,x)=\Delta_{\tilde{g}}v(t,x)+h(t,x)+r(t,x)\quad\mbox{for }(t,x)\in(0,T)\times L,\\
&v(0,x)=a(x)\;\quad\quad\quad\quad\quad\quad\quad\quad\quad\quad\;\;\;\;\;\mbox{for }x\in L,
\end{split}
\end{gather*}
where the Laplace operator is taken with respect to the Riemannian metric $\tilde{g}$ and $r\in W^{0,k-2,p}((0,T)\times L)$ is supported on $(0,T)\times(L\backslash\bigcup_{i=1}^nS_i)$. Let $\tilde{H}$ be the Friedrichs heat kernel on $(L,\tilde{g})$. Then, by uniqueness of solutions to the heat equation, $v$ must be given by
\begin{equation*}
v(t,x)=\int_0^t\int_L\tilde{H}(t-s,x,y)(h+r)(s,y)\;\mathrm dV_{\tilde{g}}(y)\;\mathrm ds+\int_L\tilde{H}(t,x,y)a(y)\;\mathrm dV_{\tilde{g}}(y)
\end{equation*}
for $(t,x)\in(0,T)\times L$. Since $h\in W^{0,k-2,p}_{2\boldsymbol\mu-4}((0,T)\times L)$ one can use the same arguments as in the proof of Theorem \ref{SobolevRegularityHeat} to show that the first term lies in $W^{1,k,p}_{2\boldsymbol\mu-2,\mathsf{P}^{\Delta}_{2\boldsymbol\mu-2}}((0,T)\times L)$. Moreover by the standard mapping properties of the Friedrich heat kernel and the Sobolev Embedding Theorem it follows that the second term lies in $C^{\infty}_{\mathsf{P}^{\Delta}_{\boldsymbol\delta}}(L)$ for every $t\in(0,T)$ and $\boldsymbol\delta\in\mathbb{R}$ and is smooth in $t\in(0,T)$. Hence it follows that
\[
v\in W^{1,k,p}_{\boldsymbol\mu}(I\times L)\cap W^{1,k,p}_{2\boldsymbol\mu-2,\mathsf{P}^{\Delta}_{2\boldsymbol\mu-2}}(I\times L)
\]
for every $I\subset\subset(0,T)$. In particular, if $(2,2\mu_i-2]\cap\mathcal{E}_{\Sigma_i}=\emptyset$ for $i=1,\ldots,n$, then it follows that $v\in W^{1,k,p}_{2\boldsymbol\mu-2}(I\times L)$ for every $I\subset\subset(0,T)$. In particular, since $\boldsymbol\mu>2$, we have that $2\boldsymbol\mu-2>\boldsymbol\mu$ and thus we have improved the decay rate of the function $v$. Iterating this procedure we then find that $v\in W^{1,k,p}_{\boldsymbol\gamma}(I\times L)$ for every $I\subset\subset(0,T)$ and every $\boldsymbol\gamma\in\mathbb{R}^n$ with $2<\boldsymbol\gamma<3$ and $(2,\gamma_i]\cap\mathcal{E}_{\Sigma_i}=\emptyset$ for $i=1,\ldots,n$.
\end{proof}

This completes our study of the spatial regularity of solutions to $P(u,f)=0$. We summarize the previous three lemmas and some obvious conclusions in the following proposition.

\begin{Prop}\label{FinalRegularity}
Let $(u,f)\in\mathcal{D}^{k,p}_{\boldsymbol\mu}$ be a solution of the Cauchy problem \eq{CauchyProblem2}. Then $f$ defines $W^{1,p}$-one-parameter families $\{x_i(t)\}_{t\in(0,T)}$ of points in $M$ for $i=1,\ldots,n$ and of isomorphisms $\{A_i(t)\}_{t\in(0,T)}$ for $i=1,\ldots,n$ with $A_i(t)\in\mathcal{A}_{x_i(t)}$ for $i=1,\ldots,n$. Finally define a one-parameter family $\{F(t,\cdot)\}_{t\in(0,T)}$ of Lagrangian submanifolds as in Proposition \ref{HHUU}. Then $\{F(t,\cdot)\}_{t\in(0,T)}$ is a $W^{1,p}$-one-parameter family of smooth Lagrangian submanifolds with isolated conical singularities modelled on $C_1,\ldots,C_n$. For $t\in(0,T)$ the Lagrangian submanifold $F(t,\cdot):L\rightarrow M$ has conical singularities $x_1(t),\ldots,x_n(t)$ and isomorphisms $A_i(t)\in\mathcal{A}_{x_i(t)}$ for $i=1,\ldots,n$ as in Definition \ref{DefLagrangianConicalSingularities}. Moreover for every $t\in(0,T)$, $F(t,\cdot):L\rightarrow M$ satisfies \eq{ConeCondition} for every $\boldsymbol\gamma\in\mathbb{R}$ with $2<\boldsymbol\gamma<3$ and $(2,\gamma_i]\cap\mathcal{E}_{\Sigma_i}=\emptyset$ for $i=1,\ldots,n$.
\end{Prop}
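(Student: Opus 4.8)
The plan is to treat this proposition as the assembly step the surrounding text advertises: the three preceding lemmas supply all the genuine content, and what remains is to unpack the parameter $f$ and to convert the weighted-space decay of the scalar potential into the geometric cone condition. First I would decode $f$. Since $(u,f)\in\mathcal{D}^{k,p}_{\boldsymbol\mu}$ we have $f\in W^{1,p}((0,T);\mathcal{F})$ with $\mathcal{F}=\mathcal{F}_1\times\cdots\times\mathcal{F}_n$ and $\mathcal{F}_i=\mathcal{E}_i\times\mathcal{U}_i$. Projecting $f$ onto its $\mathcal{E}_i$-factor and then composing with the bundle projection $\mathcal{A}\to M$ produces families $t\mapsto x_i(t)\in M$ and $t\mapsto A_i(t)\in\mathcal{A}_{x_i(t)}$; the $W^{1,p}$-regularity is inherited directly from $f$, and continuity up to $t=0$ with $x_i(0)=x_i$, $A_i(0)=A_i$ follows from the embedding $W^{1,p}((0,T);\mathbb{R}^s)\hookrightarrow C^0$ together with the initial condition $f(0)=f_0$.

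Next I would establish that $\{F(t,\cdot)\}$, with $F(t,\cdot)=\Phi^{f(t)}_L\circ\mathrm du(t,\cdot)$, is a family of smooth Lagrangian submanifolds carrying the asserted singularity data. That each $F(t,\cdot)$ is Lagrangian and that the family evolves by generalized Lagrangian mean curvature flow is exactly Proposition \ref{HHUU}; smoothness of $F(t,\cdot)$ for each $t$ follows from Lemma \ref{InteriorRegularity}, which gives $u(t,\cdot)\in C^{\infty}(L)$, together with the smoothness of $\Phi^{f(t)}_L$. To identify the singular points and isomorphisms I would restrict to the ends $S_i'$. Writing $e(t)$ for the $\mathcal{E}$-component of $f(t)$, the support properties of the cut-offs $q_j$ give $\Phi^{f(t)}_L=\Phi^{e(t)}_L$ there, and Theorem \ref{LagrangianNeighbourhoodConeFamily1} gives $\Phi^{e(t)}_L\circ\mathrm d\phi_i=\Upsilon_i^{e(t)}\circ\Phi_{C_i}\circ\mathrm da_i$. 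Reading $\mathrm da_i$ as the fibrewise translation it encodes, one finds on each end $F(t,\cdot)\circ\phi_i=\Upsilon_i^{e(t)}\circ\Phi_{C_i}\circ\mathrm d(a_i+u_i(t,\cdot))$, a graph over the fixed cone $\iota_i$ with potential $a_i+u_i$, placed by the moving Darboux frame $\Upsilon_i^{e(t)}$ satisfying $\Upsilon_i^{e(t)}(0)=x_i(t)$ and $\mathrm d\Upsilon_i^{e(t)}(0)=A_i(t)$.

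The cone condition \eqref{ConeCondition} is then the one analytic point, and here I would feed in the decay lemmas. By Lemma \ref{ImprovedDecay}, $u+a\in W^{1,2,p}_{\boldsymbol\gamma}(I\times L)$ for every $I\subset\subset(0,T)$ and every $\boldsymbol\gamma$ with $2<\boldsymbol\gamma<3$ and $(2,\gamma_i]\cap\mathcal{E}_{\Sigma_i}=\emptyset$; Lemma \ref{XXXC} upgrades this to $u+a\in W^{1,l,p}_{\boldsymbol\gamma}(I\times L)$ for all $l$. Applying the weighted Sobolev Embedding Theorem \ref{WeightedSobolevEmbedding}(ii) in the spatial variable yields $a+u(t,\cdot)\in C^{\infty}_{\boldsymbol\gamma}(L)$ for each fixed $t\in(0,T)$, i.e. $|\nabla^j(a_i+u_i(t,\cdot))|=O(r^{\gamma_i-j})$. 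Through the graph description of the previous step, the deviation $(\Upsilon_i^{e(t)})^{-1}\circ F(t,\cdot)\circ\phi_i-\iota_i=\Phi_{C_i}\circ\mathrm d(a_i+u_i)-\Phi_{C_i}\circ 0$ is governed, via smoothness of $\Phi_{C_i}$ near the zero section and Taylor expansion, by $\mathrm d(a_i+u_i)=O(r^{\gamma_i-1})$ and its derivatives, giving exactly $O(r^{\gamma_i-1-k})$, which is \eqref{ConeCondition} with $\boldsymbol\nu$ replaced by $\boldsymbol\gamma$.

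I expect the main obstacle to be precisely this last translation: converting the weighted-space decay of the scalar $u+a$ into the geometric asymptotics of the embedding $F(t,\cdot)$ while tracking the moving frame $\Upsilon_i^{e(t)}$ encoding $(x_i(t),A_i(t))$ and the additional $\mathbb{R}$-translation factor $c_i$. One must verify that the exact-cone normalisation of Theorem \ref{LagrangianNeighbourhoodConeFamily1}, the smoothness of $\Phi_{C_i}$ near its zero section, and the admissible range $2<\boldsymbol\gamma<3$ (which, as in the remark after Definition \ref{DefLagrangianConicalSingularities}, keeps the condition independent of the choice of $\Upsilon_i$) are mutually compatible. Everything else is routine assembly of the three lemmas.
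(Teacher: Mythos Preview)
Your proposal is correct and matches the paper's approach: the paper presents this proposition without proof, stating explicitly that it ``summarize[s] the previous three lemmas and some obvious conclusions,'' and your outline is precisely that assembly---decoding $f$ into the families $(x_i(t),A_i(t))$, invoking Lemma~\ref{InteriorRegularity} for spatial smoothness, and combining Lemmas~\ref{ImprovedDecay} and~\ref{XXXC} with the weighted Sobolev embedding to obtain the cone condition~\eqref{ConeCondition} via the graph description from Theorem~\ref{LagrangianNeighbourhoodConeFamily1}. If anything you have supplied more detail than the paper does.
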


\subsection{What about the time regularity of the flow?}
So far we have only discussed the spatial regularity of $u$ and not how the regularity of $u$ and $f$ in time direction can be improved. At the present state the author has no idea how the time regularity of $u$ and $f$ can be improved. Why is there a problem with the time regularity of solutions to $P(u,f)=0$, when $u\in W^{1,k,p}_{\boldsymbol\mu}((0,T)\times L)$ and $f\in W^{1,p}((0,T);\mathcal{F})$? The author is aware of two methods how time regularity for solutions of parabolic equations can be improved, but neither method seems to work in our case.

The first method would be to differentiate the equation $P(u,f)=0$ with respect to $t$ and then to use standard regularity theory for linear equations to show that $u$ and $f$ have one more time derivative than apriori known. Let us see what happens when we differentiate the equation $P(u,f)=0$ with respect to $t$. By differentiating with respect to $t$ we find that
\[
\frac{\partial}{\partial t}\left(\partial_tu-\Xi_{(u,f)}(\mbox{$\frac{\mathrm df}{\mathrm dt}$})\right)=\Delta\left(\partial_tu-\Xi_{(u,f)}(\mbox{$\frac{\mathrm df}{\mathrm dt}$})\right)+R(f,u,\mathrm du,\nabla\mathrm du,\partial_tu,\mbox{$\frac{\mathrm df}{\mathrm dt}$}),
\]
where the force term $R$ is some smooth function on its domain. The only important fact about the $R$-term is that it only depend on the first time derivatives of $u$ and $f$. In particular the $R$-term is $L^p$ in time. Using standard regularity theory for linear parabolic equations it follows that the function $\partial_tu-\Xi_{(u,f)}(\mbox{$\frac{\mathrm df}{\mathrm dt}$})$ is $W^{1,p}$ in time. So far so good, but what we really want is that $\partial_tu$ and $\Xi_{(u,f)}(\mbox{$\frac{\mathrm df}{\mathrm dt}$})$ are $W^{1,p}$ in time. What is the problem? Notice that when we differentiate $u$ with respect to $t$, then we lose two rates of decay, and therefore, loosely speaking, we have that $\partial_tu=O(\rho^{\boldsymbol\mu-2})$. In particular $\partial_tu$ and $\Xi_{(u,f)}(\mbox{$\frac{\mathrm df}{\mathrm dt}$})$ do not lie in complementary spaces anymore and we are not able to conclude that both $\partial_tu$ and $\Xi_{(u,f)}(\mbox{$\frac{\mathrm df}{\mathrm dt}$})$ are $W^{1,p}$ in time from knowing that $\partial_tu-\Xi_{(u,f)}(\mbox{$\frac{\mathrm df}{\mathrm dt}$})$ is $W^{1,p}$ in time.

The second method to improve time regularity of solutions to parabolic equations is to write the nonlinear parabolic equation as the heat equation plus a nonlinear perturbation term and then again to use regularity theory for linear parabolic equations to improve the regularity. So, more or less, we would like to write $P(u,f)=0$ in the following form
\begin{equation}\label{FFFGGGSSS}
\frac{\partial}{\partial t}(u+v)=\Delta(u+v)+R(\mathrm du,\nabla\mathrm du,\mathrm dv,\nabla\mathrm dv),
\end{equation}
where $v$ is the discrete asymptotics part and $R$ is some smooth function on its domain. Notice in particular that the $R$-term is $W^{1,p}$ in time, since it does not involve any time derivatives of $u$ or the discrete asymptotics part $v$. If we were able to write $P(u,f)=0$ in this form, then we one could use the heat kernel to write down explicit formul{\ae} for $u$ and $v$. In fact, if $H$ is the heat kernel on $L$, then $H$ admits a decomposition $H=H_{\boldsymbol\gamma}+H_{\mathsf{P}_{\boldsymbol\gamma}^{\Delta}}$, and then $u$ is given by 
\[
u(t,x)=\int_0^t\int_LH_{\boldsymbol\gamma}(t-s,x,y)R(s,y)\;\mathrm dV_g(y)\;\mathrm ds,
\]
and the discrete asymptotics part $v$ is given by
\[
v(t,x)=\int_0^t\int_LH_{\mathsf{P}_{\boldsymbol\gamma}^{\Delta}}(t-s,x,y)R(s,y)\;\mathrm dV_g(y)\;\mathrm ds,
\]
where we write $R(s,y)$ for $R(\mathrm du(s,y),\nabla\mathrm du(s,y),\mathrm dv(s,y),\nabla\mathrm dv(s,y))$ and $(s,y)\in(0,T)\times L$. 
Since the $H_{\boldsymbol\gamma}$ and $H_{\mathsf{P}_{\boldsymbol\gamma}^{\Delta}}$ are smooth in time and $R$ is $W^{1,p}$ in time, one can then show that both, $u$ and $v$, are $W^{2,p}$ in time. However, notice carefully that the equation $P(u,f)=0$ does not really have the same form as equation \eq{FFFGGGSSS}. In fact, instead of having a time derivative of the discrete asymptotics part, we have the term $\Xi_{(u,f)}(\frac{\mathrm df}{\mathrm dt})$, where $\Xi_{(u,f)}:f^*(T\mathcal{F})\rightarrow V_{\mathsf{P}_{(u,f)}}(L)$ is a time dependent map of vector bundles. Therefore, if we try to pull out the time derivative in $\Xi_{(u,f)}(\frac{\mathrm df}{\mathrm dt})$ and try to write this term in the form ``time derivative of the discrete asymptotics part," then we get new terms that involve the time derivatives of $u$ and $f$ because we have to differentiate the time dependent bundle map $\Xi_{(u,f)}$.

\subsection{The main result}
Combining Propositions \ref{HHUU}, \ref{ExistenceSolutions}, and \ref{FinalRegularity} we conclude our main theorem about the short time existence of the generalized Lagrangian mean curvature flow, when the initial Lagrangian submanifold has isolated conical singularities modelled on stable special Lagrangian cones.

\begin{Thm}\label{MainTheorem}
Let $(M,J,\omega,\Omega)$ be an $m$-dimensional almost Calabi--Yau manifold, $m\geq 3$, $C_1,\ldots,C_n$ stable special Lagrangian cones in $\mathbb{C}^m$, and $F_0:L\rightarrow M$ a Lagrangian submanifold with isolated conical singularities at $x_1,\ldots,x_n$, modelled on the stable special Lagrangian cones $C_1,\ldots,C_n$ as in Definition \ref{DefLagrangianConicalSingularities}. Then for sufficiently large $p\in(1,\infty)$ there exists $T>0$, $W^{1,p}$-one-parameter families of points $\{x_i(t)\}_{t\in(0,T)}$ in $M$ for $i=1,\ldots,n$, continuous up to $t=0$, with $x_i(0)=x_i$ for $i=1,\ldots,n$, and $W^{1,p}$-one-parameter families $\{A_i(t)\}_{t\in(0,T)}$ of isomorphisms $A_i(t)\in\mathcal{A}_{x_i(t)}$ for $i=1,\ldots,n$, continuous up to $t=0$, with $A_i(0)=A_i$ for $i=1,\ldots,n$, such that the following holds.

There exists a $W^{1,p}$-one-parameter family $\{F(t,\cdot)\}_{t\in(0,T)}$ of smooth Lagrangian submanifolds $F(t,\cdot):L\rightarrow M$, continuous up to $t=0$, with isolated conical singularities at $x_1(t),\ldots,x_n(t)$ modelled on the special Lagrangian cones $C_1,\ldots,C_n$ and with isomorphisms $A_1(t),\ldots,A_n(t)$, $A_i(t):\mathbb{C}^m\rightarrow T_{x_i(t)}M$ for $i=1,\ldots,n$ as in Definition \ref{DefLagrangianConicalSingularities}, which evolves by generalized Lagrangian mean curvature flow with initial condition $F_0:L\rightarrow M$. Moreover, for every $t\in(0,T)$ the Lagrangian submanifold $F(t,\cdot):L\rightarrow M$ satisfies \eq{ConeCondition} for every $\boldsymbol\gamma\in\mathbb{R}^n$ with $\gamma_i\in(2,3)$ and $(2,\gamma_i]\cap\mathcal{E}_{\Sigma_i}=\emptyset$ for $i=1,\ldots,n$.
\end{Thm}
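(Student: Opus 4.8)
The plan is to assemble the three Propositions \ref{HHUU}, \ref{ExistenceSolutions}, and \ref{FinalRegularity}, which together carry all the analytic content; what remains is to fix the parameters compatibly and to translate the statement about the pair $(u,f)$ into the statement about the geometric flow. First I would choose the weights and functional-analytic parameters. Since each $\mathcal{E}_{\Sigma_i}$ is a discrete subset of $\mathbb{R}$, I can select $\boldsymbol\mu\in\mathbb{R}^n$ with $2<\boldsymbol\mu<\boldsymbol\nu$ and $(2,\mu_i]\cap\mathcal{E}_{\Sigma_i}=\emptyset$ for $i=1,\ldots,n$, which are exactly the hypotheses of Proposition \ref{ExistenceSolutions}. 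Then I would take $k\in\mathbb{N}$ and $p\in(1,\infty)$ large enough, in particular with $k-\frac{m}{p}>2$, so that Propositions \ref{Smoothness} and \ref{IsomorphismLinearization}, and hence \ref{ExistenceSolutions}, all apply simultaneously.

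Next, using the hypothesis that $C_1,\ldots,C_n$ are stable special Lagrangian cones, Proposition \ref{ExistenceSolutions} produces $\tau>0$ and a pair $(u,f)\in\tilde{\mathcal{D}}_{\boldsymbol\mu}^{k,p}$ with $P(u,f)=0$ on $(0,\tau)$. Relabelling $T=\tau$, the membership $(u,f)\in\tilde{\mathcal{D}}_{\boldsymbol\mu}^{k,p}$ encodes the initial data $u(0,\cdot)=0$ and $f(0)=f_0$, so $(u,f)$ is a genuine solution of the Cauchy problem \eq{CauchyProblem2}. Feeding this solution into Proposition \ref{HHUU} yields the family
\[
F(t,\cdot)=\Phi^{f(t)}_L\circ\mathrm du(t,\cdot),
\]
which is a one-parameter family of Lagrangian submanifolds with isolated conical singularities evolving by generalized Lagrangian mean curvature flow with initial condition $F_0:L\rightarrow M$.

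Finally I would invoke Proposition \ref{FinalRegularity} on the same solution to read off the geometric conclusions: the path $f$ decomposes into $W^{1,p}$ one-parameter families $\{x_i(t)\}_{t\in(0,T)}$ of singular points and $\{A_i(t)\}_{t\in(0,T)}$ of framings with $A_i(t)\in\mathcal{A}_{x_i(t)}$; each $F(t,\cdot)$ is a smooth Lagrangian submanifold with conical singularities at the $x_i(t)$ modelled on $C_1,\ldots,C_n$ with isomorphisms $A_i(t)$; and \eq{ConeCondition} holds for every $\boldsymbol\gamma$ with $\gamma_i\in(2,3)$ and $(2,\gamma_i]\cap\mathcal{E}_{\Sigma_i}=\emptyset$. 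Continuity up to $t=0$, together with $x_i(0)=x_i$ and $A_i(0)=A_i$, follows because $f\in W^{1,p}((0,T);\mathcal{F})$ embeds into the H\"older-continuous paths by the Sobolev Embedding Theorem and $f(0)=f_0=(x_1,A_1,\ldots,x_n,A_n)$, while $u$ extends continuously with $u(0,\cdot)=0$.

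The genuine obstacle is not in this assembly but is already discharged inside Proposition \ref{ExistenceSolutions}: the whole argument rests on the linearization $\mathrm dP(0,f_0)$ being an isomorphism, which is precisely where the stability hypothesis $\sindex(C_i)=0$ enters. Stability forces the identification $W^{k,p}_{\boldsymbol\mu,\mathsf{P}_{(0,f_0)}}(L)=W^{k,p}_{\boldsymbol\mu,\mathsf{P}^D_{\boldsymbol\mu}}(L)$, so that Theorem \ref{SobolevRegularityHeat} and the index-zero Fredholm theory of Proposition \ref{Fredholm2} become available; if stability failed, the discrete asymptotics generated by the flow would not be matched by the geometric motions of the model cones and the Inverse Function Theorem step would break down. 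A secondary point one must not overlook is the compatibility of all the parameter constraints, i.e. that a single choice of $\boldsymbol\mu,k,p$ meets the hypotheses of every cited proposition at once; this is where I would be most careful in writing out the details.
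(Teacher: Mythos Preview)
Your proposal is correct and matches the paper's own proof, which is literally the one-line observation that the theorem follows by combining Propositions \ref{HHUU}, \ref{ExistenceSolutions}, and \ref{FinalRegularity}. Your additional remarks on choosing $\boldsymbol\mu,k,p$ compatibly and on the role of stability in Proposition \ref{IsomorphismLinearization} are accurate elaborations of points the paper leaves implicit.
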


{\footnotesize\noindent LINCOLN COLLEGE, TURL STREET, OX1 3DR, OXFORD, UNITED KINGDOM}\\
{\small\verb|tapio.behrndt@gmail.com|}
\end{document}